\newtheorem{thm}{Theorem}[section]
\newtheorem{prop}[thm]{Proposition}
\newtheorem{lemma}[thm]{Lemma}
\newtheorem{cor}[thm]{Corollary}
\newtheorem{remark}[thm]{Remark}
\newtheorem{example}[thm]{Example}
\numberwithin{equation}{section}
\def\bR{\mathbb{R}}
\def\bN{\mathbb{N}}
\def\cH{\mathcal{H}}
\def\bP{\mathbb{P}}
\def\<{\langle}
\def\>{\rangle}
\def\dT{d_{\mathrm T}}
\def\eps{\varepsilon}
\def\cA{\mathcal{A}}
\def\cK{\mathcal{K}}
\newcommand{\UIN}[1]{\left|\!\left|\!\left|{#1}\right|\!\right|\!\right|}
\begin{document}
\baselineskip=15pt
\allowdisplaybreaks

\centerline{\LARGE Ando-Hiai type inequalities for}
\centerline{\LARGE multivariate operator means}

\bigskip
\bigskip
\centerline{\large
Fumio Hiai\footnote{{\it E-mail:} hiai.fumio@gmail.com},
Yuki Seo\footnote{{\it E-mail:} yukis@cc.osaka-kyoiku.ac.jp}
and Shuhei Wada\footnote{{\it E-mail:} wada@j.kisarazu.ac.jp}}

\medskip
\begin{center}
$^1$\,Tohoku University (Emeritus), \\
Hakusan 3-8-16-303, Abiko 270-1154, Japan
\end{center}

\begin{center}
$^2$\,Department of Mathematics Education, Osaka Kyoiku University, \\
Asahigaoka, Kashiwara, Osaka 582-8582, Japan
\end{center}

\begin{center}
$^3$\,Department of Information and Computer Engineering, \\
National Institute of Technology, \\
Kisarazu College, Kisarazu, Chiba 292-0041, Japan
\end{center}

\medskip
\begin{abstract}
We present several Ando-Hiai type inequalities for $n$-variable operator means for positive
invertible operators. Ando-Hiai's inequalities given here are not only of the original type
but also of the complementary type and of the reverse type involving the generalized
Kantorovich constant.

\bigskip\noindent
{\it 2010 Mathematics Subject Classification:}
47A64, 47A63, 47B65

\bigskip\noindent
{\it Key words and phrases:}
Operator mean, Ando-Hiai inequality, Geometric mean, Karcher mean, Power mean,
Generalized Kantorovich constant
\end{abstract}

\section{Introduction}

The most studied ($2$-variable) operator mean is probably the (weighted) \emph{geometric mean}
$$
A\#_\alpha B:=A^{1/2}(A^{-1/2}BA^{-1/2})^\alpha A^{1/2}\qquad
\mbox{where}\quad0\le\alpha\le1
$$
for positive invertible operators $A$ and $B$. The geometric mean $\#=\#_{1/2}$ was first
introduced by Pusz and Woronowicz \cite{PW}, which was further developed into a general theory
of operator means by Kubo and Ando \cite{KA}. \emph{Ando-Hiai's inequality} \cite{AH} is an
operator inequality receiving much attention related to the geometric mean, which says that
for $0\le\alpha\le1$,
$$
A\#_\alpha B\le I\ \implies\ A^r\#_\alpha B^r\le I,\qquad r\ge1,
$$
or a bit more strongly,
\begin{align}\label{F-1.1}
A^r\#_\alpha B^r\le\|A\#_\alpha B\|_\infty^{r-1}(A\#_\alpha B),\qquad r\ge1,
\end{align}
where $\|\cdot\|_\infty$ is the operator norm. From this with use of the antisymmetric tensor
power technique, the so-called log-majorization for the geometric mean was obtained in
\cite{AH}. Among others, a generalization of Ando-Hiai's inequality in \cite{Wa} is worth
noting, where the class of operator means satisfying Ando-Hiai's inequality was characterized
by the power monotone increasing condition.

It was a long-standing open problem to extend the geometric mean to the case of more than two
variables of matrices or operators. The problem was finally settled by an iteration method in
\cite{ALM} and by a Riemannian geometry method in \cite{Mo,BH}. Since then, the latter
approach has extensively been advanced by many authors, e.g., \cite{LL0,LP,LL2,Pa}. Nowadays,
the multivariate geometric mean in the Riemannian geometric approach is often called the
\emph{Karcher mean} since it is determined as a solution of the so-called Karcher equation.
Another important multivariate operator mean under recent active consideration is the
\emph{power mean} developed in \cite{LP,LL2}.

The extension of Ando-Hiai's inequality to the Karcher mean was made by Yamazaki \cite{Ya1},
and its modification for the power mean was also shown in \cite{LY}. In the present paper we
aim to present different Ando-Hiai type inequalities  in the spirit of \cite{Wa} for as much
as general $n$-variable operator means. To do so, we first develop, in Section 2, a theory of
deformation for $n$-variable operator means, where we introduce the deformed mean $M_\sigma$
of an $n$-variable mean $M$ by a $2$-variable operator mean $\sigma$ based on a fixed point
method. This is considered in some sense as an extended version of the generalized operator
means by P\'alfia \cite{Pa}. In Section 3, we then consider Ando-Hiai's inequality for
$n$-variable means $M$ in such forms as
\begin{align}\label{F-1.2}
M(A_1^r,\dots,A_n^r)\le\|M(A_1,\dots,A_n)\|_\infty^{r-1}M(A_1,\dots,A_n)
\quad\mbox{for $r\ge1$},
\end{align}
and its complementary version
\begin{align}\label{F-1.3}
M(A_1^r,\dots,A_n^r)\ge\|M(A_1,\dots,A_n)\|_\infty^{r-1}M(A_1,\dots,A_n)
\quad\mbox{for $0<r\le1$}.
\end{align}
We prove a hereditary result that if $M$ satisfies \eqref{F-1.2} or \eqref{F-1.3} and
$\sigma$ is power monotone increasing, then the deformed mean $M_\sigma$ satisfies the same.
This derives Ando-Hiai's inequality for the power mean when $M$ is the weighted arithmetic
mean and $\sigma=\#_\alpha$. Moreover, we show that the above condition for $\sigma$ is
indeed necessary for $M_\sigma$ to satisfy Ando-Hiai's inequality when $M$ is, in particular,
the Karcher mean.

Next, in Section 4, we prove certain modifications of \eqref{F-1.2} and \eqref{F-1.3} (see
\eqref{F-4.1} and \eqref{F-4.2} for the precise forms) when $M$ is the deformed mean $M_\sigma$
of an arbitrary $n$-variable mean $M$ by an arbitrary $\sigma$ (except the left trivial mean).
When $M$ is specialized to a $2$-variable mean and $\sigma=\#_\alpha$, our inequalities here
include the generalized version in \cite{Wa} and the complementary version in \cite{Se}.
Furthermore, in Section 5, we obtain the reverse versions, involving the generalized
Kantorovich constant, of \eqref{F-1.2} (though restricted to the power mean) and of the
modification of \eqref{F-1.3}. We expect that the complementary and reverse versions of
Ando-Hiai's inequalities give a new perspective in the topic. Finally, in Section 6, the
optimality of the power $r\ge1$ or $0<r\le1$ in \eqref{F-1.2} and \eqref{F-1.3} is examined,
thus extending an optimality result in \cite{Wa2}.

Here it should be noted that in a recent paper \cite{Ya2} Yamazaki obtained two Ando-Hiai's
inequalities for $n$-variable generalized operator means in the sense of \cite{Pa}, which are
in the weaker formulation of the form $M(A_1,\dots,A_n)\le I$ $\implies$
$M(A_1^r,\dots,A_n^r)\le I$. One inequality in \cite{Ya2} will be incorporated in Theorem
\ref{T-3.6} and another is similar to Theorem \ref{T-4.1}.

\section{Multivariate means}

Throughout the paper, $\cH$ is a general Hilbert space assumed to be infinite-dimensional
unless otherwise stated, $B(\cH)$ is the algebra of all bounded linear operators on $\cH$,
$B(\cH)^+$ the set of positive operators in $B(\cH)$, and $\bP=\bP(\cH)$ the set of positive
invertible operators in $B(\cH)$. For $X\in\bP$, $\|X\|_\infty$ is the operator norm of $X$
and $\lambda_{\min}(X)$ is the minimum of the spectrum of $X$, that is,
$\lambda_{\min}(X)=\|X^{-1}\|_\infty^{-1}$. Moreover, $I$ denotes the identity operator, and
SOT means the strong operator topology on $B(\cH)$. The \emph{Thompson metric} $\dT$ on $\bP$
is defined by
$$
\dT(A,B):=\|\log A^{-1/2}BA^{-1/2}\|_\infty=\log\max\{M(A/B),M(B/A)\},
$$
where $M(A/B):=\inf\{\alpha>0:A\le\alpha B\}$. It is known \cite{Th} that $(\bP,\dT)$ is a
complete metric space and both topologies on $\bP$ induced by $\dT$ and $\|\cdot\|_\infty$
coincide.

The notion of $2$-variable operator means was introduced by Kubo-Ando \cite{KA} in an
axiomatic way as follows: A map $\sigma:B(\cH)^+\times B(\cH)^+\to B(\cH)^+$ is called an
\emph{operator mean} if it satisfies the following properties:
\begin{itemize}
\item[(i)] \emph{Monotonicity:} $A\le C$, $B\le D$ $\implies$ $A\sigma B\le C\sigma D$.
\item[(ii)] \emph{Transformer inequality:} $C(A\sigma B)C\le(CAC)\sigma(CBC)$ for every
$C\in B(\cH)^+$.
\item[(iii)] \emph{Downward continuity:} $A_k\searrow A$, $B_k\searrow B$ $\implies$
$A_k\sigma B_k\searrow A\sigma B$, where $A_k\searrow A$ means that $A_1\ge A_2\ge\cdots$ and
$A_k\to A$ in SOT.
\item[(iv)] \emph{Normalized condition:} $I\sigma I=I$.
\end{itemize}

The main theorem of \cite{KA} says that there is a one-to-one order-isomorphic and affine
correspondence $\sigma\leftrightarrow f$ between the operator means $\sigma$ and the
non-negative operator monotone functions $f$ on $(0,\infty)$ with $f(1)=1$ determined by
$$
f(x)I=I\sigma(xI),\qquad x>0,
$$
$$
A\sigma B=A^{-1/2}f(A^{-1/2}BA^{-1/2})A^{1/2},\qquad A,B\in\bP,
$$
which extends to general $A,B\in B(\cH)^+$ as
$A\sigma B=\lim_{\eps\searrow0}(A+\eps I)\sigma(B+\eps I)$ in SOT. The above operator
monotone function $f$ on $(0,\infty)$ corresponding to $\sigma$ is denoted by $f_\sigma$ and
called the \emph{representing function} of $\sigma$.

As a multivariate extension of operator means we call a map $M:\bP^n\to\bP$ an
\emph{$n$-variable operator mean} if it satisfies the following properties:
\begin{itemize}
\item[(I)] \emph{Monotonicity:} If $A_j,B_j\in\bP$ and $A_j\le B_j$ for $1\le j\le n$, then
$$
M(A_1,\dots,A_n)\le M(B_1,\dots,B_n).
$$
\item[(II)] \emph{Congruence invariance:} For every $A_1,\dots,A_n\in\bP$ and any invertible
$S\in B(\cH)$,
$$
S^*M(A_1,\dots,A_n)S=M(S^*A_1S,\dots,S^*A_nS).
$$
A special case of this is homogeneity: $M(tA_1,\dots,tA_n)=tM(A_1,\dots,A_n)$ for $t>0$.
\item[(III)] \emph{Monotone continuity:} Let $A_j,A_{jk}\in\bP$ for $1\le j\le n$ and
$k\in\bN$. If either $A_{jk}\searrow A_j$ as $k\to\infty$ for each $j$ or $A_{jk}\nearrow A_j$
as $k\to\infty$ for each $j$, then
$$
M(A_{1k},\dots,A_{nk})\ \longrightarrow\ M(A_1,\dots,A_n)\quad\mbox{in SOT}.
$$
\item[(IV)] \emph{Normalized condition:} $M(I,\dots,I)=I$.
\end{itemize}

Resemblances of (I)--(IV) for multivariate means to (i)--(iv) for operator means are
apparent, but there are also slight differences between those. For one thing, multivariate
means are maps restricted on $\bP^n$ while operator means are on $B(\cH)^+\times B(\cH)^+$.
For another, (II) is formally stronger than (ii) but we note \cite{KA} that congruence
invariance $S^*(A\sigma B)S=(S^*AS)\sigma(S^*BS)$ for invertible $S\in B(\cH)$ is automatic
for operator means $\sigma$. Moreover, we assume continuity both downward and upward in
(III) while only downward is assumed in (iii). Continuity from both directions seems natural
when we take care of transformation under $A\in\bP\mapsto A^{-1}\in\bP$ for means on $\bP^n$.
Here it is worth noting that any operator mean is also upward continuous when restricted to
$\bP\times\bP$.

Let $M$ be an $n$-variable operator mean and $\sigma$ be a $2$-variable operator mean as
stated above, and assume that $\sigma\ne\frak{l}$, where $\frak{l}$ is the left trivial mean
$A\frak{l}B=A$. For given $A_1,\dots,A_n\in\bP$ we consider the fixed point type equation
\begin{align}\label{F-2.1}
X=M(X\sigma A_1,\dots,X\sigma A_n)\qquad\mbox{for}\quad X\in\bP,
\end{align}
which is, due to (II), equivalent to
$$
I=M\bigl(I\sigma(X^{-1/2}A_1X^{-1/2}),\dots,I\sigma(X^{-1/2}A_nX^{-1/2})\bigr),
$$
that is,
\begin{align}\label{F-2.2}
I=M\bigl(f_\sigma(X^{-1/2}A_1X^{-1/2}),\dots,f_\sigma(X^{-1/2}A_nX^{-1/2})\bigr),
\end{align}
where $f_\sigma$ is the representing function of $\sigma$.

We show the next theorem concerning the solution to equation \eqref{F-2.1} or \eqref{F-2.2},
which gives a theoretical basis for our discussions below. In fact, part (3) of the theorem
will repeatedly be used in Sections 3--5.

\begin{thm}\label{T-2.1}
\begin{itemize}
\item[$(${\rm 1}$)$] For every $A_1,\dots,A_n\in\bP$ there exists a unique $X_0\in\bP$ which satisfies
\eqref{F-2.1}.
\item[$(${\rm 2}$)$] Write $M_\sigma(A_1,\dots,A_n)$ for the unique solution $X_0$ to \eqref{F-2.1} given
in $(${\rm 1}$)$. Then $M_\sigma:\bP^n\to\bP$ is an $n$-variable mean satisfying $(${\rm I}$)$--$(${\rm IV}$)$ again.
\item[$(${\rm 3}$)$] If $Y\in\bP$ and $Y\le M(Y\sigma A_1,\dots,Y\sigma A_n)$, then
$Y\le M_\sigma(A_1,\dots,A_n)$. If $Y\in\bP$ and $Y\ge M(Y\sigma A_1,\dots,Y\sigma A_n)$,
then $Y\ge M_\sigma(A_1,\dots,A_n)$.
\end{itemize}
\end{thm}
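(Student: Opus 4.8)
The plan is to realize $M_\sigma(A_1,\dots,A_n)$ as the unique fixed point in $\bP$ of the self-map
$$
F(X):=M(X\sigma A_1,\dots,X\sigma A_n),\qquad X\in\bP .
$$
For (1), fix $m,N>0$ with $mI\le A_j\le NI$ for all $j$. Using the monotonicity and homogeneity of $M$, the monotonicity of $\sigma$ in both variables, and the normalizations $I\sigma I=I$, $M(I,\dots,I)=I$, one checks that $F$ is monotone and maps the order interval $\cK:=\{X\in\bP:mI\le X\le NI\}$ into itself; since $\cK$ is closed in $(\bP,\dT)$ it is a complete metric space. The crux is that $F|_\cK$ is a strict $\dT$-contraction. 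As $M$ is monotone and homogeneous, $\dT(M(B_1,\dots,B_n),M(C_1,\dots,C_n))\le\max_j\dT(B_j,C_j)$, so it suffices to show that for each fixed $A\in\cK$ the map $X\mapsto X\sigma A$ is a strict $\dT$-contraction on $\cK$ with a rate depending only on $\sigma,m,N$. By congruence invariance of $\dT$ and of $\sigma$, conjugating by $X^{-1/2}$ reduces this to a one-variable estimate for the representing function: writing $g(s):=sf_\sigma'(s)/f_\sigma(s)$ (which always lies in $[0,1]$), one must bound $\log f_\sigma(s)-\log f_\sigma(e^{-t}s)$ and $\log f_\sigma(e^{t}s)-\log f_\sigma(s)$ below by $\delta' t$ for $s$ in a fixed compact subinterval of $(0,\infty)$ and $0\le t\le\log(N/m)$, where $\delta'>0$ is a lower bound for $g$ on a slightly larger compact subinterval. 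This is exactly where the hypothesis $\sigma\ne\frak{l}$ enters: it forces $f_\sigma$ to be a non-constant non-negative operator monotone function, so $f_\sigma'(s)>0$ for every $s>0$ (integral representation of operator monotone functions), hence $g$ is continuous and strictly positive on $(0,\infty)$ and such a $\delta'$ exists. I expect this contraction estimate to be the main obstacle. Granting it, Banach's fixed point theorem yields a fixed point $X_0\in\cK$; and if $X_0'\in\bP$ is any other solution, then $X_0$ and $X_0'$ lie in a common order interval on which $F$ is again a contraction, so $X_0'=X_0$, giving uniqueness in all of $\bP$.

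I would prove (3) next, since it is needed for the monotonicity part of (2). Suppose $Y\in\bP$ with $Y\le F(Y)$. Choose $N$ with $Y\le NI$ and $A_j\le NI$ for all $j$; as above $F$ maps $\{X\in\bP:\lambda_{\min}(Y)I\le X\le NI\}$ into itself and is monotone, so the iterates $F^k(Y)$ increase and stay in this interval, hence converge in SOT to some $Z$ in it. Letting $k\to\infty$ in $F^{k+1}(Y)=M(F^k(Y)\sigma A_1,\dots,F^k(Y)\sigma A_n)$, using the upward continuity of $\sigma$ on $\bP\times\bP$ and (III) for $M$, gives $Z=F(Z)$, so $Z=M_\sigma(A_1,\dots,A_n)$ by the uniqueness in (1); thus $Y\le Z=M_\sigma(A_1,\dots,A_n)$. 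The case $Y\ge F(Y)$ is symmetric, using a lower bound $mI$ and the downward continuity (iii) of $\sigma$.

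For (2), each property is read off from the defining equation together with uniqueness. Normalization: $X=I$ solves $X=M(X\sigma I,\dots,X\sigma I)$, so $M_\sigma(I,\dots,I)=I$. Congruence invariance: if $X_0$ solves the equation for $(A_j)$, then conjugating by an invertible $S$ and using congruence invariance of $M$ and of $\sigma$ shows $S^*X_0S$ solves the equation for $(S^*A_jS)$, so $S^*M_\sigma(A_1,\dots,A_n)S=M_\sigma(S^*A_1S,\dots,S^*A_nS)$. Monotonicity: if $A_j\le B_j$ for all $j$ and $X_0=M_\sigma(A_1,\dots,A_n)$, then $X_0=M(X_0\sigma A_1,\dots,X_0\sigma A_n)\le M(X_0\sigma B_1,\dots,X_0\sigma B_n)$, so $X_0$ is a subsolution for the $B$-equation and (3) gives $X_0\le M_\sigma(B_1,\dots,B_n)$. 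Monotone continuity: if $A_{jk}\searrow A_j$ for each $j$, set $Y_k:=M_\sigma(A_{1k},\dots,A_{nk})$; by the monotonicity just proved $Y_k$ decreases in $k$ and $Y_k\ge M_\sigma(A_1,\dots,A_n)$, so $Y_k\to Y$ in SOT for some $Y\in\bP$, and passing to the limit in $Y_k=M(Y_k\sigma A_{1k},\dots,Y_k\sigma A_{nk})$ via (iii) for $\sigma$ and (III) for $M$ gives $Y=M(Y\sigma A_1,\dots,Y\sigma A_n)$, hence $Y=M_\sigma(A_1,\dots,A_n)$ by uniqueness. The case $A_{jk}\nearrow A_j$ is identical, with the upward continuity of $\sigma$ on $\bP\times\bP$ replacing (iii).
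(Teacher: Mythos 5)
Your proposal is correct, and parts (2) and (3) coincide with the paper's proof (monotone iterates from $Y$, passing to the limit via upward/downward continuity of $\sigma$ on $\bP\times\bP$ and (III) for $M$, then uniqueness; and reading (I)--(IV) for $M_\sigma$ off the fixed-point equation together with (3)). Part (1), however, takes a genuinely different route. You get existence and uniqueness from the Banach contraction principle: on an order interval $\{X: mI\le X\le NI\}$ containing the $A_j$ you establish a uniform Thompson-metric contraction rate $1-\delta'$ for $X\mapsto X\sigma A$, where $\delta'>0$ is a lower bound of $s\,f_\sigma'(s)/f_\sigma(s)$ on a compact subinterval of $(0,\infty)$; the hypothesis $\sigma\ne\frak{l}$ enters through the L\"owner representation, which gives $f_\sigma'>0$ everywhere, and your scalar estimate $\log f_\sigma(s)-\log f_\sigma(e^{-t}s)\ge\delta' t$ is exactly the inequality needed, so the contraction claim does hold. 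This is essentially P\'alfia's method, which the paper deliberately avoids (see Remark 2.5): the paper instead obtains the solution as the SOT-limit of the decreasing iterates $F^k(\delta^{-1}I)$, using only the monotone continuity (III) of $M$, and proves uniqueness by the soft, non-uniform strict contraction of Lemma 2.3, which needs nothing beyond strict monotonicity of $f_\sigma$. Your route buys a quantitative geometric convergence rate and makes existence independent of (III); the paper's route avoids any derivative estimate on $f_\sigma$ and recycles the same monotone-iteration mechanism in (3). One small wording point in your proof of (3): $F$ need not map $\{X:\lambda_{\min}(Y)I\le X\le NI\}$ into itself, since the lower bound can fail when some $\lambda_{\min}(A_j)<\lambda_{\min}(Y)$; this is harmless, because the iterates $F^k(Y)$ increase from $Y$ and only the upper bound $NI$ requires invariance, exactly as in the paper's argument.
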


We call $M_\sigma$ given in (2) above the \emph{deformed mean} from $M$ by $\sigma$. To prove
the theorem, we first give two lemmas.

\begin{lemma}\label{L-2.2}
For every $A_j,B_j\in\bP$ for $1\le j\le n$,
\begin{align*}
\dT(M(A_1,\dots,A_n),M(B_1,\dots,B_n))\le\max_{1\le j\le k}\dT(A_j,B_j).
\end{align*}
\end{lemma}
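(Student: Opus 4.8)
The plan is to reduce the statement entirely to the order characterization of the Thompson metric and then invoke only monotonicity (I) and homogeneity (the special case of (II)). First I would record the elementary equivalence: for $A,B\in\bP$ and $d\ge0$,
$$
\dT(A,B)\le d\iff e^{-d}A\le B\le e^dA .
$$
This is immediate from $\dT(A,B)=\|\log A^{-1/2}BA^{-1/2}\|_\infty$, since $\|\log A^{-1/2}BA^{-1/2}\|_\infty\le d$ is equivalent to $-dI\le\log A^{-1/2}BA^{-1/2}\le dI$, hence (by operator monotonicity of $\exp$, or just the spectral mapping theorem) to $e^{-d}I\le A^{-1/2}BA^{-1/2}\le e^dI$, and one then conjugates by $A^{1/2}$.

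Next, put $d:=\max_{1\le j\le n}\dT(A_j,B_j)$. Applying the equivalence coordinatewise gives $e^{-d}A_j\le B_j\le e^dA_j$ for every $j$. Using monotonicity (I) followed by homogeneity $M(tX_1,\dots,tX_n)=tM(X_1,\dots,X_n)$ (the case $S=\sqrt t\,I$ of congruence invariance (II)), I obtain
$$
M(B_1,\dots,B_n)\le M(e^dA_1,\dots,e^dA_n)=e^d M(A_1,\dots,A_n),
$$
and, in the same way, $M(B_1,\dots,B_n)\ge e^{-d}M(A_1,\dots,A_n)$. Thus $e^{-d}M(A_1,\dots,A_n)\le M(B_1,\dots,B_n)\le e^dM(A_1,\dots,A_n)$, and the equivalence of the first step, now applied to the pair $M(A_1,\dots,A_n)$, $M(B_1,\dots,B_n)$, gives $\dT(M(A_1,\dots,A_n),M(B_1,\dots,B_n))\le d$, which is the asserted inequality (with $k$ in the statement being a typo for $n$).

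I do not expect any genuine obstacle here: the only point needing a line of care is the ``if and only if'' between the Thompson-metric bound and the two-sided operator inequality, and that is routine once one passes through the spectral description of $\log A^{-1/2}BA^{-1/2}$. One should merely note in passing that scaling by $e^{\pm d}$ keeps the arguments inside $\bP^n$, so that $M$ (which is a priori defined only on $\bP^n$) may legitimately be evaluated at them.
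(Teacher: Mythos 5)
Your proposal is correct and is essentially the paper's own argument: set $d=\max_j\dT(A_j,B_j)$, use $e^{-d}A_j\le B_j\le e^dA_j$ together with monotonicity (I) and homogeneity (the scalar case of (II)) to sandwich $M(B_1,\dots,B_n)$ between $e^{\pm d}M(A_1,\dots,A_n)$, and read off the Thompson-metric bound. The only difference is that you spell out the elementary equivalence $\dT(A,B)\le d\iff e^{-d}A\le B\le e^dA$, which the paper uses implicitly; the $k$ in the statement is indeed a typo for $n$.
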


\begin{proof}
The proof is standard from (I) and (II) while we give it for completeness. Let $\alpha$ be
the maximum in the right-hand side. Since $e^{-\alpha}A_j\le B_j\le e^\alpha A_j$ for
$1\le j\le n$, we have
$$
e^{-\alpha}M(A_1,\dots,A_n)=M(e^{-\alpha}A_1,\dots,e^{-\alpha}A_n)
\le M(B_1,\dots,B_n),
$$
and similarly $M(B_1,\dots,B_n)\le e^\alpha M(A_1,\dots,A_n)$. Hence the asserted inequality
follows.
\end{proof}

\begin{lemma}\label{L-2.3}
If $A,X,Y\in\bP$ and $X\ne Y$, then $\dT(X\sigma A,Y\sigma A)<\dT(X,Y)$.
\end{lemma}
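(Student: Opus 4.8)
The plan is to convert the two-sided operator bounds that encode the Thompson metric into scalar inequalities on the spectrum of a single fixed operator, and then to exploit that $\sigma\ne\frak l$ forces the representing function $f_\sigma$ to be strictly increasing; compactness of that spectrum will upgrade pointwise strictness to a uniform multiplicative gain. Concretely, first set $r:=\dT(X,Y)$, which is $>0$ since $X\ne Y$, so that $e^{-r}X\le Y\le e^rX$. Put $C:=X^{-1/2}AX^{-1/2}\in\bP$; by congruence invariance of $\sigma$ one has $X\sigma(tA)=X^{1/2}f_\sigma(tC)X^{1/2}$ for every $t>0$, in particular $X\sigma A=X^{1/2}f_\sigma(C)X^{1/2}$.

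The key step is to extract the gap. Since $\sigma\ne\frak l$, its representing function $f_\sigma$ is not the constant $1$; being a non-negative operator monotone (hence real-analytic) function on $(0,\infty)$, it is strictly positive and strictly increasing there. The spectrum of $C$ lies in a compact interval $[m,M]\subset(0,\infty)$ with $m=\lambda_{\min}(C)>0$. As $e^{-r}c<c<e^rc$ for $c\in[m,M]$, the numbers $s_0:=\min_{c\in[m,M]}\log\bigl(f_\sigma(c)/f_\sigma(e^{-r}c)\bigr)$ and $s_1:=\min_{c\in[m,M]}\log\bigl(f_\sigma(e^rc)/f_\sigma(c)\bigr)$ are both strictly positive, being minima of strictly positive continuous functions on a compact set. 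Put $s:=\min\{s_0,s_1\}>0$. Then the functional calculus applied to $C$ gives $f_\sigma(e^{-r}C)\le e^{-s}f_\sigma(C)$ and $f_\sigma(e^rC)\ge e^sf_\sigma(C)$.

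It remains to assemble the estimate using monotonicity and homogeneity of $\sigma$. From $Y\le e^rX$ we obtain $Y\sigma A\le(e^rX)\sigma A=e^r\bigl(X\sigma(e^{-r}A)\bigr)=e^rX^{1/2}f_\sigma(e^{-r}C)X^{1/2}\le e^{r-s}X^{1/2}f_\sigma(C)X^{1/2}=e^{r-s}(X\sigma A)$, and symmetrically from $Y\ge e^{-r}X$, $Y\sigma A\ge(e^{-r}X)\sigma A=e^{-r}X^{1/2}f_\sigma(e^rC)X^{1/2}\ge e^{-(r-s)}(X\sigma A)$. Hence $e^{-(r-s)}(X\sigma A)\le Y\sigma A\le e^{r-s}(X\sigma A)$ (which forces $r-s\ge0$), so $\dT(X\sigma A,Y\sigma A)\le r-s<r=\dT(X,Y)$.

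The one substantive point — essentially the whole content of the lemma — is obtaining the \emph{uniform} gap $s>0$ rather than merely a pointwise strict inequality, and this is exactly where both hypotheses are used: if $\sigma=\frak l$ then $f_\sigma$ is constant and one gets $\dT(X\sigma A,Y\sigma A)=\dT(X,Y)$, while the invertibility (and boundedness) of $A$ is what keeps $\mathrm{spec}(C)$ bounded away from $0$ and $\infty$, so that compactness turns the pointwise strict monotonicity of $f_\sigma$ into the uniform bound.
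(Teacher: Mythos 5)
Your proof is correct and follows essentially the same route as the paper: both exploit that $\sigma\ne\frak l$ makes $f_\sigma$ strictly increasing, use compactness of the spectrum of $X^{-1/2}AX^{-1/2}$ to turn pointwise strictness into a uniform gap, and then combine monotonicity and homogeneity of $\sigma$ exactly as in the paper's chain $Y\sigma A\le(e^{r}X)\sigma A<e^{r}(X\sigma A)$. The only cosmetic difference is that you track a multiplicative gap $e^{s}$ to get the explicit bound $\dT(X\sigma A,Y\sigma A)\le r-s$, whereas the paper records an additive gap and then extracts some $\beta<r$.
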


\begin{proof}
First, we see that if $X,A,B\in\bP$ and $A<B$, then $X\sigma A<X\sigma B$. Here we write
$A<B$ to mean that $B-A\in\bP$. By assumption $\sigma\ne\frak{l}$, i.e.,
$f_\sigma\not\equiv1$, $f_\sigma$ is strictly increasing on $(0,\infty)$. 
Since $A<B$ and hence $X^{-1/2}AX^{-1/2}<X^{-1/2}BX^{-1/2}$, one has
$X^{-1/2}AX^{-1/2}+\eps I\le X^{-1/2}BX^{-1/2}$ for some $\eps>0$. Choose a $\delta\in(0,1)$
such that
$\delta I\le X^{-1/2}AX^{-1/2}\le\delta^{-1}I$. Since
\begin{align*}
&f_\sigma(X^{-1/2}AX^{-1/2}+\eps I)-f_\sigma(X^{-1/2}AX^{-1/2}) \\
&\qquad\ge\bigl(\min\{f_\sigma(t+\eps)-f_\sigma(t):\delta\le t\le\delta^{-1}\}\bigr)I,
\end{align*}
one has
\begin{align*}
f_\sigma(X^{-1/2}AX^{-1/2})<f_\sigma(X^{-1/2}AX^{-1/2}+\eps I)
\le f_\sigma(X^{-1/2}BX^{-1/2}),
\end{align*}
which implies that $X\sigma A<X\sigma B$.

Now, let $\alpha:=\dT(X,Y)>0$ (thanks to $X\ne Y$). Since $e^{-\alpha}A<A<e^\alpha A$, the
above shown fact gives
\begin{align*}
&Y\sigma A\le(e^\alpha X)\sigma A<(e^\alpha X)\sigma(e^\alpha A)=e^\alpha(X\sigma A), \\
&Y\sigma A\ge(e^{-\alpha}X)\sigma A>(e^{-\alpha}X)\sigma(e^{-\alpha}A)=e^{-\alpha}(X\sigma A).
\end{align*}
Therefore, $e^{-\beta}(X\sigma A)\le Y\sigma A\le e^\beta(X\sigma A)$ for some
$\beta\in(0,\alpha)$, which implies that $\dT(X\sigma A,Y\sigma A)\le\beta<\alpha$.
\end{proof}

\noindent
{\it Proof of Theorem \ref{T-2.1}}.\enspace
(1)\enspace
Choose a $\delta\in(0,1)$ such that $A_1,\dots,A_n\in\Sigma_\delta$, where
$\Sigma_\delta:=\{X\in\bP:\delta I\le X\le\delta^{-1}I\}$. Define the map $F:\bP\to\bP$ by
$$
F(X):=M(X\sigma A_1,\dots,X\sigma A_n),\qquad X\in\bP.
$$
It is immediate to see from (I), (II) and (IV) that $F$ maps $\Sigma_\delta$ into itself and
$F$ is monotone, i.e., if $X,Y\in\bP$ and $X\le Y$ then $F(X)\le F(Y)$. Let $Y_0:=\delta^{-1}I$.
Since $F(Y_0)\in\Sigma_\delta$, one has $Y_0\ge F(Y_0)$ so that
$Y_0\ge F(Y_0)\ge F^2(Y_0)\ge\cdots\ge\delta I$. Therefore, $F^k(Y_0)\searrow X_0$ for some
$X_0\in\bP$. Since $F^k(Y_0)\sigma A_j\searrow X_0\sigma A_j$ as $k\to\infty$, it follows
from (III) that
\begin{align*}
X_0&=\lim_{k\to\infty}F(F^k(Y_0))
=\lim_{k\to\infty}M(F^k(Y_0)\sigma A_1,\dots,F^k(Y_0)\sigma A_n) \\
&=M(X_0\sigma A_1,\dots,X_0\sigma A_n)\quad\mbox{in SOT}.
\end{align*}
Hence $X_0$ is a solution to \eqref{F-2.1}. To show the uniqueness of the solution, assume
that $X_0,X_1\in\bP$ satisfies \eqref{F-2.1} and $X_0\ne X_1$. By Lemmas \ref{L-2.2} and
\ref{L-2.3},
\begin{align*}
\dT(X_0,X_1)\le\max_{1\le j\le n}\dT(X_0\sigma_jA_j,X_1\sigma_jA_j)<\dT(X_0,X_1),
\end{align*}
a contradiction.

(3)\enspace
We prove this before (2). Let $X_0:=M_\sigma(A_1,\dots,A_n)$. Assume that $Y\in\bP$ and
$Y\le M(Y\sigma A_1,\dots,Y\sigma A_n)$. Then $Y\le F(Y)\le F^2(Y)\le\cdots$. Choose a
$\delta>0$ such that $Y,A_1,\dots,A_n\in\Sigma_\delta$. Since $Y\sigma A_j\le\delta^{-1}I$, one
has $F(Y)\le\delta^{-1}I$. Iterating this gives $F^k(Y)\le\delta^{-1}I$ for all $k$, hence
$F^k(Y)\nearrow Y_0$ for some $Y_0\in\bP$. As in the proof of (1), $F(Y_0)=Y_0$ due to (III)
and hence $Y\le Y_0=X_0$. The proof of the other assertion is similar.

(2)\enspace
Assume that $A_j\le B_j$ for $1\le j\le n$, and let $Y_0:=M_\sigma(B_1,\dots,B_n)$. Since
$Y_0=M(Y_0\sigma B_1,\dots,Y_0\sigma B_n)\ge M(Y_0\sigma A_1,\dots,Y_0\sigma A_n)$, one has
$Y_0\ge M_\sigma(A_1,\dots,A_n)$ by (3) proved above. Hence $M_\sigma$ satisfies (I). For any
$A_1,\dots,A_n\in\bP$ let $X_0:=M_\sigma(A_1,\dots,A_n)$. Then for any invertible
$S\in B(\cH)$ one has
\begin{align*}
S^*X_0S&=S^*M(X_0\sigma A_1,\dots,X_0\sigma A_n)S \\
&=M((S^*X_0S)\sigma(S^*A_1S),\dots,(S^*X_0S)\sigma(S^*A_nS)),
\end{align*}
showing that $S^*X_0S=M_\sigma(S^*A_1S,\dots,S^*A_nS)$. Hence $M_\sigma$ satisfies (II). Let
$A_j,A_{jk}\in\bP$ and assume that $A_{jk}\searrow A_j$ as $k\to\infty$ for $1\le j\le n$.
Let $X_k:=M_\sigma(A_{1k},\dots,A_{nk})$ so that
$X_k=M(X_k\sigma A_{1k},\dots,X_k\sigma A_{nk})$. By (I) for $M_\sigma$ we have $X_k\searrow$
as $k\to\infty$ and $X_k\ge M_\sigma(A_1,\dots,A_n)$. Hence $X_k\searrow X_0$ for some
$X_0\in\bP$. Since $X_k\sigma A_{jk}\searrow X_0\sigma A_j$ for $1\le j\le n$, it follows
from (III) for $M$ that $X_0=M(X_0\sigma A_1,\dots,X_0\sigma A_n)$, showing that
$X_0=M_\sigma(A_1,\dots,A_n)$. Hence $M_\sigma$ is downward continuous. The proof of upward
continuity is similar. Finally, since $M(I\sigma I,\dots,I\sigma I)=M(I,\dots,I)=I$,
$M_\sigma$ satisfies (IV).\qed

\bigskip
Let $M$ be an $n$-variable operator mean satisfying (I)--(IV). It is obvious that
$M_{\frak{r}}=M$, where $\frak{r}$ is the right trivial mean $A\frak{r}B=B$. The \emph{adjoint}
$M^*$ of $M$ is defined by
$$
M^*(A_1,\dots,A_n):=M(A_1^{-1},\dots,A_n^{-1})^{-1},\qquad A_j\in\bP.
$$
Then it is easy to verify that $M^*$ is again a mean satisfying (I)--(IV) and
$(M_\sigma)^*=(M^*)_{\sigma^*}$ holds for any operator mean $\sigma\ne\frak{l}$, where
$\sigma^*$ is the adjoint of $\sigma$, i.e., $A\sigma^*B:=(A^{-1}\sigma B^{-1})^{-1}$.

\begin{example}\label{E-2.4}\rm
Typical examples of multivariate means satisfying (I)--(IV) are in order. Let
$\omega=(w_1,\dots,w_n)$ be a probability vector, i.e., $w_j\ge0$ and $\sum_{j=1}^nw_j=1$.
\begin{itemize}
\item[(a)] The \emph{weighted arithmetic mean} $\cA_\omega$ and the \emph{weighted harmonic
mean} $\cH_\omega=(\cA_\omega)^*$ are
$$
\cA_\omega(A_1,\dots,A_n):=\sum_{j=1}^nw_jA_j,\qquad
\cH_\omega(A_1,\dots,A_n):=\Biggl(\sum_{j=1}^nw_jA_j^{-1}\Biggr)^{-1}
$$
for $A_j\in\bP$. It is obvious that $\cA_\omega$ and $\cH_\omega$ satisfy (I)--(IV).

\item[(b)] For each $\alpha\in[-1,1]\setminus\{0\}$ the (weighted) \emph{power mean}
$P_{\omega,\alpha}(A_1,\dots,A_n)$, introduced in \cite{LP,LL1,LL2}, is defined as the unique
solution to the equation
\begin{align*}
&X=\cA_\omega(X\#_\alpha A_1,\dots,X\#_\alpha A_n)\qquad\,\mbox{for $0<\alpha\le1$}, \\
&X=\cH_\omega(X\#_{-\alpha}A_1,\dots,X\#_{-\alpha}A_n)\quad\mbox{for $-1\le\alpha<0$},
\end{align*}
that is, for $0<\alpha\le1$,
$$
P_{\omega,\alpha}=(\cA_\omega)_{\#_\alpha},\qquad
P_{\omega,-\alpha}=(\cH_\omega)_{\#_\alpha}=(P_{\omega,\alpha})^*.
$$
By Theorem \ref{T-2.1}\,(2) and the above (a) we see that $P_{\omega,\alpha}$ satisfies
properties (I)--(IV), while those except (III) are included in \cite{LL2,Pa}.

\item[(c)] The \emph{Karcher mean} (the multivariate geometric mean) $G_\omega(A_1,\dots,A_n)$
is defined as the unique solution to the \emph{Karcher equation}
\begin{align*}
\sum_{j=1}^nw_j\log(X^{-1/2}A_jX^{-1/2})=0
\end{align*}
(see \cite{Mo,LP,LL2}). It is known \cite{LL2} that
$$
P_{\omega,-\alpha}(A_1,\dots,A_n)\le G_\omega(A_1,\dots,A_n)
\le P_{\omega,\alpha}(A_1,\dots,A_n),\qquad0<\alpha\le1,
$$
and moreover, as $\alpha\searrow0$,
\begin{align}
P_{\omega,\alpha}(A_1,\dots,A_n)\ &\searrow\ G_\omega(A_1,\dots,A_n), \label{F-2.3}\\
P_{\omega,-\alpha}(A_1,\dots,A_n)\ &\nearrow\ G_\omega(A_1,\dots,A_n). \label{F-2.4}
\end{align}
From these we easily see that $G_\omega$ satisfies (III). The other properties of $G_\omega$
are well-known \cite{LL2,Pa}.
\end{itemize}
\end{example}

\begin{remark}\label{R-2.5}\rm
In \cite{Pa} P\'alfia introduced a generalized notion of operator means of probability
measures on $\bP$ determined by the \emph{generalized Karcher equation}. Restricted to the
$n$-variable situation, the equation is given as
\begin{align}\label{F-2.5}
\sum_{j=1}^nw_jg(X^{-1/2}A_jX^{-1/2})=0,
\end{align}
where $\omega$ is a probability vector and $g$ is an operator monotone function on $(0,\infty)$
with $g(1)=0$ and $g'(1)=1$. For an operator mean $\sigma$ ($\ne\frak{l}$) let
$g_\sigma(x):=(f_\sigma(x)-1)/f_\sigma'(1)$; then $g_\sigma$ is operator monotone on
$(0,\infty)$ satisfying $g_\sigma(1)=0$ and $g_\sigma'(1)=1$, and equation \eqref{F-2.2} for
$M=\cA_\omega$ is equivalent to \eqref{F-2.5} with $g=g_\sigma$. Hence, in the special case
$M=\cA_\omega$, a deformed mean $(\cA_w)_\sigma$ is a generalized operator mean
determined by \eqref{F-2.5}. However, the converse is not true; indeed, the $2$-variable
geometric mean $\#_\alpha$ for $\alpha\in(0,1)$ with $\alpha\ne1/2$ cannot appear
as a deformed mean of the arithmetic mean (see Example \ref{E-2.8}\,(1) below). 
The unique existence of the solution of \eqref{F-2.5} in \cite{Pa} is based on the Banach
contraction principle with respect to $\dT$, while our proof of Theorem \ref{T-2.1} is based
on the monotone continuity of $M$ and the fact given in Lemma \ref{L-2.2}.
\end{remark}

\begin{remark}\label{R-2.6}\rm
A result similar to Theorem \ref{T-2.1}\,(3) for the Karcher mean $G_\omega$ was given in
\cite[Theorem 1]{Ya1}, which says that if $\sum_{j=1}^nw_j\log(X^{-1/2}A_jX^{-1/2})\le0$
then $G_\omega(A_1,\dots,A_n)\le X$ (the same holds with $\ge$ in place of $\le$). This was
useful in the proof of Ando-Hiai's inequality in \cite{Ya1}. Moreover, the same result was
shown in \cite[Theorem 3]{Ya2} for generalized operator means in Remark \ref{R-2.5} to extend
Ando-Hiai's inequality to them. Note also that Theorem \ref{T-2.1}\,(3) for the power mean
(see Example \ref{E-2.4}\,(b)) was shown in \cite[Theorem 4.4]{FS}.
\end{remark}

Let $\tau$ and $\sigma$ be operator means with $\sigma\ne\frak{l}$. Then one can specialize
the above construction of $M_\sigma$ to $M=\tau$ to have the deformed mean
$\tau_\sigma:\bP\times\bP\to\bP$, that is, for every $A,B\in\bP$, $A\tau_\sigma B$ is a unique
solution $X\in\bP$ to the equation
$$
X=(X\sigma A)\tau(X\sigma B).
$$
One can extend this to $\tau_\sigma:B(\cH)^+\times B(\cH)^+\to B(\cH)^+$ in the usual way as
$$
A\tau_\sigma B:=\lim_{\eps\searrow0}(A+\eps I)\tau_\sigma(B+\eps I)\quad
\mbox{in SOT},\qquad A,B\in B(\cH)^+.
$$
Then the next theorem was proved in \cite{Hi}.

\begin{thm}\label{T-2.7}
The deformed mean $\tau_\sigma$ is again an operator mean (in the sense of Kubo-Ando).
Furthermore, the representing function $f_{\tau_\sigma}$ of $\tau_\sigma$ is determined in
such a way that $x=f_{\tau_\sigma}(t)$ for $t>0$ is a unique solution to
$$
(x\sigma1)\tau(x\sigma t)=x,\qquad x>0,
$$
that is,
\begin{align}\label{F-2.6}
f_\sigma(1/x)f_\tau\biggl({f_\sigma(t/x)\over f_\sigma(1/x)}\biggr)=1,\qquad x>0.
\end{align}
\end{thm}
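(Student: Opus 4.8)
The plan is to verify that $\tau_\sigma$, which by Theorem~\ref{T-2.1}\,(2) specialized to the $2$-variable mean $M=\tau$ already satisfies (I)--(IV), actually meets the Kubo-Ando axioms (i)--(iv), and then to pin down its representing function. For the axioms, monotonicity (i) is immediate from (I). The transformer inequality (ii) follows from congruence invariance (II) combined with the standard trick that $C(A\tau_\sigma B)C\le(CAC)\tau_\sigma(CBC)$ can be reduced, via approximating $C\in B(\cH)^+$ by invertible positive operators and using the SOT-extension, to the congruence identity; one approximates $C$ by $C+\eps I$, applies (II), and passes to the limit using downward continuity of the extended $\tau_\sigma$. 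The normalization (iv) is just (IV). The only genuinely new point among the axioms is downward continuity (iii) on all of $B(\cH)^+\times B(\cH)^+$ (as opposed to monotone continuity (III) on $\bP^n$), but this is handled by the very definition of the SOT-extension $A\tau_\sigma B=\lim_{\eps\searrow0}(A+\eps I)\tau_\sigma(B+\eps I)$ together with a routine diagonal/monotonicity argument: if $A_k\searrow A$ and $B_k\searrow B$ then $A_k\tau_\sigma B_k$ is a decreasing net bounded below, its SOT-limit dominates $A\tau_\sigma B$ by monotonicity, and the reverse inequality comes from comparing with $(A+\eps I)\tau_\sigma(B+\eps I)$ for each fixed $\eps$ since eventually $A_k\le A+\eps I$, $B_k\le B+\eps I$.

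Having established that $\tau_\sigma$ is an operator mean, I would identify its representing function $f_{\tau_\sigma}$. By the Kubo-Ando correspondence, $f_{\tau_\sigma}(t)I=I\,\tau_\sigma\,(tI)$ for $t>0$. Write $x:=f_{\tau_\sigma}(t)$, so $xI=I\,\tau_\sigma\,(tI)$. By the defining equation of the deformed mean applied to the pair $(I,tI)$, $xI$ is the unique $X\in\bP$ with $X=(X\sigma I)\tau(X\sigma tI)$; evaluating at scalar multiples of $I$, where $\sigma$ and $\tau$ act through their representing functions, $X=xI$ satisfies $(xI\sigma I)\tau(xI\sigma tI)=xI$. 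Now $xI\sigma I=x(I\sigma(x^{-1}I))=xf_\sigma(1/x)I$ by homogeneity of $\sigma$ and the definition of $f_\sigma$, and similarly $xI\sigma tI=xf_\sigma(t/x)I$. Feeding these into $\tau$: $(xf_\sigma(1/x)I)\tau(xf_\sigma(t/x)I)=xf_\sigma(1/x)\bigl(I\tau((f_\sigma(t/x)/f_\sigma(1/x))I)\bigr)=xf_\sigma(1/x)f_\tau\!\bigl(f_\sigma(t/x)/f_\sigma(1/x)\bigr)I$. Setting this equal to $xI$ and cancelling $x>0$ yields exactly \eqref{F-2.6}. The uniqueness of the solution $x=f_{\tau_\sigma}(t)$ is inherited from the uniqueness in Theorem~\ref{T-2.1}\,(1) restricted to scalar operators.

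The main obstacle I anticipate is not in the algebraic identity \eqref{F-2.6} — that is a short computation once homogeneity and the representing-function formulas are invoked — but rather in making the reduction to scalars fully rigorous and in the downward-continuity verification (iii) on $B(\cH)^+\times B(\cH)^+$ including the boundary where operators may fail to be invertible. In particular one must check that the SOT-extension is consistent, i.e.\ that for $A,B\in\bP$ the extended definition returns the same value as the fixed-point construction; this uses that $(A+\eps I)\tau_\sigma(B+\eps I)\to A\tau_\sigma B$ by the monotone continuity (III) of $\tau_\sigma$ already proved, as $A+\eps I\searrow A$ and $B+\eps I\searrow B$. A second subtle point is verifying that $f_{\tau_\sigma}$ so defined is genuinely operator monotone with $f_{\tau_\sigma}(1)=1$; but this is automatic from the Kubo-Ando theorem once $\tau_\sigma$ is known to be an operator mean, so no separate analytic argument is needed — the content is entirely in confirming the axioms. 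I would also remark that $f_{\tau_\sigma}(1)=1$ can be checked directly from \eqref{F-2.6} by setting $t=1$: then $x=1$ solves $f_\sigma(1)f_\tau(1)=1$, consistent with $f_\sigma(1)=f_\tau(1)=1$.
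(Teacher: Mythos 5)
Your identification of the representing function via the scalar computation leading to \eqref{F-2.6} is correct, and the paper itself offers no proof of this theorem (it quotes it from \cite{Hi}), so the proposal must stand on its own; unfortunately it does not, because of the verification of the Kubo--Ando axioms. The genuine gap is downward continuity (iii) on $B(\cH)^+\times B(\cH)^+$. Your argument rests on the claim that $A_k\searrow A$ in SOT implies $A_k\le A+\varepsilon I$ for all large $k$; this is false in infinite dimensions: take $A_k$ to be the orthogonal projection onto $\overline{\mathrm{span}}\{e_j:j\ge k\}$ in $\ell^2$, so that $A_k\searrow 0$ in SOT while $\|A_k\|_\infty=1$, hence $A_k\not\le\varepsilon I$ for any $\varepsilon<1$. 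Downward continuity at (possibly non-invertible) limits is precisely the delicate point of Kubo--Ando theory; in \cite{KA} it comes from the integral representation of operator monotone functions and the variational behaviour of the parallel sum, not from comparison with $(A+\varepsilon I)\tau_\sigma(B+\varepsilon I)$. A milder instance of the same slip occurs in your treatment of the transformer inequality (ii): $(C+\varepsilon I)A(C+\varepsilon I)$ is not monotone in $\varepsilon$, so ``downward continuity'' does not apply as stated; that step is repairable (one has $(C+\varepsilon I)A(C+\varepsilon I)\le CAC+\delta(\varepsilon)I$ with $\delta(\varepsilon)\to0$ in norm, and then monotonicity plus the definition of the SOT-extension suffice), but (iii) is not repairable along these lines.

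The standard way to close the gap --- in effect the route of \cite{Hi} --- is to avoid verifying (iii) directly. First show that on $\bP\times\bP$ one has $A\tau_\sigma B=A^{1/2}f(A^{-1/2}BA^{-1/2})A^{1/2}$, where $f(t)$ is the unique scalar solution of \eqref{F-2.6} produced by your computation. Indeed, by congruence invariance it suffices to identify $I\tau_\sigma X$ for $X\in\bP$; the function $f$ is increasing and even continuous (by the Thompson-metric nonexpansiveness implicit in Lemmas \ref{L-2.2} and \ref{L-2.3}), so $f(X)$ is defined by functional calculus, and since $I$, $X$ and $f(X)$ all commute, $Y:=f(X)$ satisfies the fixed-point equation $Y=(Y\sigma I)\tau(Y\sigma X)$ because it reduces, spectrally, to the scalar equation at each point of the spectrum of $X$; uniqueness in Theorem \ref{T-2.1}\,(1) then forces $I\tau_\sigma X=f(X)$. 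Monotonicity (I) of $\tau_\sigma$ now gives $0<X\le Y\Rightarrow f(X)\le f(Y)$, i.e.\ $f$ is operator monotone on $(0,\infty)$ with $f(1)=1$. Consequently $\tau_\sigma$ coincides on $\bP\times\bP$ with the Kubo--Ando mean generated by $f$, and its SOT-extension is exactly that mean; all four axioms, including the problematic downward continuity, then follow from the Kubo--Ando correspondence, while the identification of $f_{\tau_\sigma}$ with the solution of \eqref{F-2.6} is your scalar argument, with uniqueness inherited from Theorem \ref{T-2.1}\,(1) as you say.
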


It is obvious that $\tau_{\frak{r}}=\tau$, $\frak{l}_\sigma=\frak{l}$ and
$\frak{r}_\sigma=\frak{r}$ for all $\tau$ and $\sigma\ne\frak{l}$. It is known \cite{Hi} that
$f_{\tau_\sigma}'(1)=f_\tau'(1)$ and for any $\sigma\ne\frak{l}$ the map
$\tau\mapsto\tau_\sigma$ is injective on the operator means. The following examples may be
instructive to understand deformed means for $2$-variable operator means.

\begin{example}\label{E-2.8}\rm
Let $0<w<1$, and consider deformed means $\tau_\sigma$ in the case where $\tau$ is the weighted
arithmetic or the geometric means.
\begin{itemize}
\item[(1)] Let $\tau=\triangledown_w$, the weighted arithmetic mean with the representing
function $1-w+wx$; then equation \eqref{F-2.6} becomes
\begin{align}\label{F-2.7}
(1-w)f_\sigma(1/x)+wf_\sigma(t/x)=1,\qquad x>0.
\end{align}
When $\sigma=\#_\alpha$ with the representing function $x^\alpha$ for $0<\alpha\le1$, the
solution to \eqref{F-2.7} is $x=(1-w+wt^\alpha)^{1/\alpha}$, so we confirm that
$(\triangledown_w)_{\#_\alpha}$ is the $2$-variable case of the power mean $P_{w,\alpha}$ in
Example \ref{E-2.4}\,(b). When $\sigma=\,!_\alpha$, the weighted harmonic mean with the
representing function $(1-\alpha+\alpha x^{-1})^{-1}$ for $0<\alpha\le1$, one can solve
\eqref{F-2.7} to find that the representing function of $(\triangledown_w)_{!_\alpha}$ is
$$
f_{(\triangledown_w)_{!_\alpha}}(t)
={\sqrt{((1-\alpha-w)t+w-\alpha)^2+4\alpha(1-\alpha)t}-((1-\alpha-w)t+w-\alpha)
\over2\alpha}
$$
for $t>0$. In particular, note that $\triangledown_!=\#$, where
$\triangledown=\triangledown_{1/2}$ and $!=\triangledown^*$. Here we show that for any
$w\in(0,1)$, $w\ne1/2$, the deformed means from $\triangledown_w$ do not contain the geometric
mean. Since $f_{(\triangledown_w)_\sigma}'(1)=f_{\triangledown_w}'(1)=w$, only $\#_w$ has a
chance to become a deformed mean from $\triangledown_w$. Assume on the contrary that
$\#_w=(\triangledown_w)_\sigma$ for some $\sigma\ne\frak{l}$. Then, since $x=t^w$ is a solution
to \eqref{F-2.7}, we have
$$
(1-w)f_\sigma(t^{-w})+wf_\sigma(t^{1-w})=1,\qquad t>0.
$$
Letting $t\searrow0$ and $t\nearrow\infty$ gives
$$
(1-w)f_\sigma(\infty)+wf_\sigma(0)=(1-w)f_\sigma(0)+wf_\sigma(\infty)=1.
$$
Thanks to $w\ne1/2$, it must follow that $f_\sigma(0)=f_\sigma(\infty)=1$, so $\sigma=\frak{l}$,
a contradiction.
\item[(2)] Let $\tau=\#_w$; then \eqref{F-2.6} becomes
\begin{align}\label{F-2.8}
f(1/x)\biggl({f(t/x)\over f(1/x)}\biggr)^w=1,\quad\mbox{i.e.},\quad
f(t/x)=f(1/x)^{w-1\over w},
\end{align}
from which we find that the representing function of $(\#_w)_\sigma$ is the inverse function of
$$
g_{w,\sigma}(x):=xf_\sigma^{-1}\bigl(f_\sigma(1/x)^{w-1\over w}\bigr),
\qquad x\in(f_\sigma(0),f_\sigma(\infty)).
$$
For instance, when $\sigma=\triangledown_\alpha$ for $0<\alpha\le1$, one can easily see that
the representing function of $(\#_w)_{\triangledown_\alpha}$ is the inverse function of
$$
g_{w,\alpha}(x)
:={x\over\alpha}\bigl[\bigl(1-\alpha+\alpha x^{-1}\bigr)^{w-1\over w}-1+\alpha\bigr],
\qquad x\in(x_0,\infty),
$$
where $x_0:=\alpha/\bigl[(1-\alpha)^{w\over w-1}-1+\alpha\bigr]$ is determined from
$g_{w,\alpha}(x_0)=0$. When $\sigma$ is the weighted harmonic mean
$!_\alpha=\triangledown_\alpha^*$ for $0<\alpha\le1$, $(\#_w)_{!_\alpha}$ is the adjoint of
$(\#_w)_{\triangledown_\alpha}$, whose representing function is the inverse function of
$g_{w,\alpha}(x^{-1})^{-1}$ on $x\in(0,x_0^{-1})$. Moreover, when $\sigma=\#_\alpha$, that
$(\#_w)_{\#_\alpha}=\#_w$ is immediately seen. In particular, when $\sigma=\#$, we have the
explicit form of the representing function of $\#_{\triangledown_\alpha}$ as
$$
f_{\#_{\triangledown_\alpha}}(t)
={(1-\alpha)(1+t)+\sqrt{(1-\alpha)^2(1+t)^2+4\alpha(2-\alpha)t}\over2(2-\alpha)},\qquad t>0.
$$
\end{itemize}
\end{example}

\section{Ando-Hiai type inequalities}

Assume that $M:\bP^n\to\bP$ is an $n$-variable operator mean satisfying (I)--(IV) introduced
in Section 2. For $A_1,\dots,A_n\in\bP$ we consider inequalities of Ando-Hiai type for $M$
as follows:
\begin{align}
M(A_1^r,\dots,A_n^r)&\ge\lambda_{\min}^{r-1}(M(A_1,\dots,A_n))M(A_1,\dots,A_n)
\quad\ \,\mbox{for $r\ge1$}, \label{F-3.1}\\
M^*(A_1^r,\dots,A_n^r)&\le\|M^*(A_1,\dots,A_n)\|_\infty^{r-1}M^*(A_1,\dots,A_n)
\quad\mbox{for $r\ge1$},\label{F-3.2}
\end{align}
and
\begin{align}
M(A_1^r,\dots,A_n^r)&\le\lambda_{\min}^{r-1}(M(A_1,\dots,A_n))M(A_1,\dots,A_n)
\quad\ \,\mbox{for $0<r\le1$}, \label{F-3.3}\\
M^*(A_1^r,\dots,A_n^r)&\ge\|M^*(A_1,\dots,A_n)\|_\infty^{r-1}M^*(A_1,\dots,A_n)
\quad\mbox{for $0<r\le1$}.\label{F-3.4}
\end{align}
Inequalities \eqref{F-3.1} and \eqref{F-3.2} are the direct generalization of the original
Ando-Hiai inequality in \eqref{F-1.1}. Their weaker and conventional formulations are
\begin{align}
M(A_1,\dots,A_n)&\ge I\ \implies\ M(A_1^r,\dots,A_n^r)\ge I,
\quad\ r\ge1, \label{F-3.5}\\
M^*(A_1,\dots,A_n)&\le I\ \implies\ M^*(A_1^r,\dots,A_n^r)\le I,
\quad r\ge1. \label{F-3.6}
\end{align}
On the other hand, inequalities \eqref{F-3.3} and \eqref{F-3.4} are complementary to
\eqref{F-3.1} and \eqref{F-3.2}, whose original version for the $2$-variable $\#_\alpha$
was given in \cite{Se}. We may write the weaker formulation of \eqref{F-3.3} as
$$
M(A_1,\dots,A_n)\le I\ \implies
\ M(A_1^r,\dots,A_n^r)\le\lambda_{\min}^{r-1}(M(A_1,\dots,A_n))I,
\quad0<r\le1,
$$
and similarly for \eqref{F-3.4}, while they are not so attractive as \eqref{F-3.5} and
\eqref{F-3.6}. It is immediate to see that
$$
\mbox{\eqref{F-3.1}}\,\iff\,\mbox{\eqref{F-3.2}},\quad
\mbox{\eqref{F-3.3}}\,\iff\,\mbox{\eqref{F-3.4}},\quad
\mbox{\eqref{F-3.5}}\,\iff\,\mbox{\eqref{F-3.6}}.
$$

Following \cite{Wa} we say that an operator mean $\sigma$ (in the sense of Kubo-Ando) is
\emph{power monotone increasing} (\emph{p.m.i.}\ for short) if $f_\sigma(x^r)\ge f_\sigma(x)^r$
for all $x>0$ and $r\ge1$. It is clear that the adjoint $\sigma^*$ is p.m.i.\ if and only if
$f_\sigma$ satisfies the reversed inequality. Ando-Hiai's inequality was extended in \cite{Wa}
in such a way that $\sigma$ is p.m.i.\ if and only if
$A\sigma B\ge I$\,$\implies$\,$A^r\sigma B^r\ge I$ holds for all $r\ge1$ and $A,B\in\bP$.

Our main result of this section is the following:

\begin{thm}\label{T-3.1}
Let $M:\bP^n\to\bP$ be an $n$-variable operator mean as above. Let $\sigma$ be a
p.m.i.\ operator mean with $\sigma\ne\frak{l}$. If $M$ satisfies \eqref{F-3.1} (resp.,
\eqref{F-3.3}) for every $A_1,\dots,A_n\in\bP$, then $M_\sigma$ does the same.
\end{thm}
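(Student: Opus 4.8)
The plan is to use the characterization of $M_\sigma$ through the fixed point equation \eqref{F-2.1}, together with Theorem \ref{T-2.1}\,(3), in exactly the spirit of how Ando--Hiai's inequality for the power mean is usually derived. Fix $A_1,\dots,A_n\in\bP$ and $r\ge1$, and set $X_0:=M_\sigma(A_1,\dots,A_n)$ and $\lambda:=\lambda_{\min}(X_0)$. The goal is to produce a candidate operator $Y\in\bP$ which is a subsolution of the defining equation for $M_\sigma(A_1^r,\dots,A_n^r)$, i.e. $Y\le M(Y\sigma A_1^r,\dots,Y\sigma A_n^r)$, and which satisfies $Y=\lambda^{r-1}X_0$ up to scaling; then Theorem \ref{T-2.1}\,(3) forces $Y\le M_\sigma(A_1^r,\dots,A_n^r)$, which is \eqref{F-3.1} for $M_\sigma$. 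By homogeneity (II) we may first normalize: replacing $A_j$ by $\lambda^{-1}A_j$ and $X_0$ by $\lambda^{-1}X_0$, we can assume $\lambda_{\min}(X_0)=1$, i.e. $X_0\ge I$, and the claim becomes $M_\sigma(A_1^r,\dots,A_n^r)\ge X_0$.

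The key computation is then the following. Since $X_0=M(X_0\sigma A_1,\dots,X_0\sigma A_n)$ and $M$ satisfies \eqref{F-3.1}, applying \eqref{F-3.1} to the $n$-tuple $(X_0\sigma A_1,\dots,X_0\sigma A_n)$ with exponent $r$ gives
\begin{align*}
M\bigl((X_0\sigma A_1)^r,\dots,(X_0\sigma A_n)^r\bigr)
\ge\lambda_{\min}^{r-1}(X_0)\,X_0=X_0,
\end{align*}
using $\lambda_{\min}(M(X_0\sigma A_1,\dots,X_0\sigma A_n))=\lambda_{\min}(X_0)=1$. So it suffices to compare $(X_0\sigma A_j)^r$ with $X_0\sigma A_j^r$: if we can show $X_0\sigma A_j^r\ge(X_0\sigma A_j)^r$ for each $j$ under the normalization $X_0\ge I$, then by monotonicity (I) of $M$ we get $M(X_0\sigma A_1^r,\dots,X_0\sigma A_n^r)\ge M((X_0\sigma A_1)^r,\dots,(X_0\sigma A_n)^r)\ge X_0$, i.e. $X_0$ is a subsolution for the $r$-th powers, and Theorem \ref{T-2.1}\,(3) finishes. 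Thus everything reduces to the $2$-variable inequality $X\sigma A^r\ge(X\sigma A)^r$ for $X,A\in\bP$ with $X\ge I$, $r\ge1$, $\sigma$ p.m.i.

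This last step is where the p.m.i.\ hypothesis is used, and it is the main obstacle — though here the work has already been done in \cite{Wa}. Concretely, the p.m.i.\ property $f_\sigma(x^r)\ge f_\sigma(x)^r$ is equivalent (by the argument of \cite{Wa}) to the operator statement $A\sigma B\ge I\implies A^r\sigma B^r\ge I$, and more quantitatively to $A^r\sigma B^r\ge\lambda_{\min}^{r-1}(A\sigma B)(A\sigma B)$ for $A,B\in\bP$, $r\ge1$. Putting $A=X$, $B=A_j$ (abuse of notation aside) one has, when $X\ge I$, that $X^r\ge X$ and hence $X^r\sigma A_j^r\ge\lambda_{\min}^{r-1}(X\sigma A_j)(X\sigma A_j)\ge(X\sigma A_j)^r/\lambda_{\min}(X\sigma A_j)^{?}$; one has to be slightly careful about which normalization is cleanest, so in practice I would instead argue directly: from $X\ge I$ and the transformer/congruence properties write $X\sigma A_j^r=X^{1/2}f_\sigma(X^{-1/2}A_j^rX^{-1/2})X^{1/2}$ and use operator monotonicity of $f_\sigma$ together with $A_j^r\ge$ (a suitable power expression) — but the economical route is simply to invoke the quantitative Ando--Hiai inequality of \cite{Wa} for the single p.m.i.\ mean $\sigma$, which directly yields $X\sigma A_j^r\ge(X\sigma A_j)^r$ whenever $\lambda_{\min}(X\sigma A_j)\ge1$, and this holds here because $X_0\sigma A_j\ge$ something $\ge I$ is \emph{not} automatic — so instead I keep the $\lambda_{\min}$ factors throughout and only normalize $X_0$, not the $X_0\sigma A_j$. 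Finally, the complementary case \eqref{F-3.3} (for $0<r\le1$) is handled in exactly the same way with all inequalities reversed: $X_0\ge I$ is replaced by $\lambda_{\min}(X_0)$-normalization giving $X_0\ge I$ again but now \eqref{F-3.3} applied to $(X_0\sigma A_j)$ and the reversed $2$-variable estimate $X\sigma A^r\le(X\sigma A)^r$ (valid for $0<r\le1$, $X\ge I$, $\sigma$ p.m.i., since $f_\sigma(x^r)\le f_\sigma(x)^r$ for $0<r\le1$ when $f_\sigma(x^s)\ge f_\sigma(x)^s$ for $s\ge1$) produce the supersolution inequality $X_0\ge M(X_0\sigma A_1^r,\dots,X_0\sigma A_n^r)$, and the second half of Theorem \ref{T-2.1}\,(3) gives $M_\sigma(A_1^r,\dots,A_n^r)\le X_0$, as desired. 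Alternatively the $0<r\le1$ case can be deduced from the $r\ge1$ case by the substitution $A_j\mapsto A_j^r$, $r\mapsto 1/r$, which is cleaner and avoids repeating the argument.
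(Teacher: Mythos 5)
Your overall skeleton (characterize $M_\sigma(A_1^r,\dots,A_n^r)$ via the fixed point equation and produce a sub/supersolution to invoke Theorem \ref{T-2.1}\,(3)) is the same as the paper's, but the key step is broken. You reduce everything to the $2$-variable inequality $X\sigma A^r\ge(X\sigma A)^r$ for $X\ge I$, $r\ge1$, $\sigma$ p.m.i., and this is simply false: take $\sigma=\#$ (p.m.i.\ since $f_\sigma(x)=x^{1/2}$), $X=\mathrm{diag}(1,2)$, $A=I$, $r=2$; then $X\sigma A^r=X^{1/2}=\mathrm{diag}(1,\sqrt2)$ while $(X\sigma A)^r=X=\mathrm{diag}(1,2)$, so the inequality fails even though $\lambda_{\min}(X)=\lambda_{\min}(X\sigma A)=1$. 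Your fallback, invoking Wada's quantitative Ando--Hiai inequality for $\sigma$, does not help: that result concerns $X^r\sigma A^r$ (both arguments powered), not $X\sigma A^r$ versus $(X\sigma A)^r$, and your own hedging in that passage never produces a valid substitute. The paper avoids this trap by keeping the power \emph{inside} the congruence: with $\lambda=\lambda_{\min}(X_0)$ the operator $\lambda^{1/2}X_0^{-1/2}$ is a contraction, so Hansen--Pedersen's inequality (operator convexity of $x^r$, hence only for $1\le r\le2$) gives $\lambda^{1-r}X_0^{-1/2}A_j^rX_0^{-1/2}\ge(X_0^{-1/2}A_jX_0^{-1/2})^r$; then p.m.i.\ is applied in the functional calculus, $f_\sigma(\lambda^{1-r}X_0^{-1/2}A_j^rX_0^{-1/2})\ge f_\sigma(X_0^{-1/2}A_jX_0^{-1/2})^r$, and the hypothesis \eqref{F-3.1} for $M$ is applied to the tuple $f_\sigma(X_0^{-1/2}A_jX_0^{-1/2})$, whose $M$-value is $I$ (so the $\lambda_{\min}^{r-1}$ factor is just $1$); general $r\ge1$ is then reached by a doubling induction. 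Both the restriction $1\le r\le2$ and the induction step are essential and are absent from your plan, because your application of \eqref{F-3.1} to the un-congruenced tuple $(X_0\sigma A_1,\dots,X_0\sigma A_n)$ forces the comparison of $X_0\sigma A_j^r$ with $(X_0\sigma A_j)^r$, which is exactly the false statement above.

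The closing ``cleaner alternative'' for the complementary case is also wrong. Substituting $A_j\mapsto A_j^r$ and $r\mapsto1/r$ into the $r\ge1$ inequality yields a factor $\lambda_{\min}^{1-{1\over r}}(M_\sigma(A_1^r,\dots,A_n^r))$, and since $1-{1\over r}\le0$ this cannot be traded for the factor $\lambda_{\min}^{r-1}(M_\sigma(A_1,\dots,A_n))$ appearing in \eqref{F-3.3}; this is precisely the point of Remark \ref{R-3.5} (only the opposite direction of substitution works). Moreover, the two halves of the theorem have different hypotheses ($M$ satisfies \eqref{F-3.1} versus \eqref{F-3.3}), so neither half can be formally deduced from the other in any case. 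The $0<r\le1$ assertion has to be proved by the same congruence argument, now using Hansen's inequality for the operator monotone function $x^r$ and the reversed functional inequality $f_\sigma(x^r)\le f_\sigma(x)^r$ for $0<r\le1$ (which, as you correctly note, does follow from p.m.i.), with no induction needed.
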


\begin{proof}
Assume that $M$ satisfies \eqref{F-3.1}. First, assume that $1\le r\le2$, and for any
$A_1,\dots,A_n\in\bP$ let $X:=M_\sigma(A_1,\dots,A_n)$ and $\lambda:=\lambda_{\min}(X)$.
By \eqref{F-2.2} we have
\begin{align}\label{F-3.7}
I=M\bigl(f_\sigma(X^{-1/2}A_1X^{-1/2}),\dots,f_\sigma(X^{-1/2}A_nX^{-1/2})\bigr).
\end{align}
To prove \eqref{F-3.1} for $M_\sigma$, by Theorem \ref{T-2.1}\,(3) it suffices to prove that
$$
\lambda^{r-1}X\le M\bigl((\lambda^{r-1}X)\sigma A_1^r,\dots,
(\lambda^{r-1}X)\sigma A_n^r\bigr),
$$
or equivalently,
\begin{align}\label{F-3.8}
I\le M\bigl(f_\sigma(\lambda^{1-r}X^{-1/2}A_1^rX^{-1/2}),\dots,
f_\sigma(\lambda^{1-r}X^{-1/2}A_n^rX^{-1/2})\bigr).
\end{align}
Since $\lambda^{1/2}X^{-1/2}=(\|X^{-1}\|_\infty^{-1}X^{-1})^{1/2}\le I$ and $x^r$ is
operator convex, it follows from Hansen-Pedersen's inequality \cite{HP} that
$$
(\lambda^{1/2}X^{-1/2})A_j^r(\lambda^{1/2}X^{-1/2})
\ge\bigl[(\lambda^{1/2}X^{-1/2})A_j(\lambda^{1/2}X^{-1/2})\bigr]^r
$$
so that
$$
\lambda^{1-r}X^{-1/2}A_j^rX^{-1/2}\ge(X^{-1/2}A_jX^{-1/2})^r.
$$
Therefore,
$$
f_\sigma(\lambda^{1-r}X^{-1/2}A_j^rX^{-1/2})
\ge f_\sigma((X^{-1/2}A_jX^{-1/2})^r)
\ge f_\sigma(X^{-1/2}A_jX^{-1/2})^r,
$$
where the latter inequality follows from p.m.i.\ of $\sigma$. From the monotonicity property
of $M$ and the assumption that $M$ satisfies \eqref{F-3.1} one has
\begin{align*}
&M\bigl(f_\sigma(\lambda^{1-r}X^{-1/2}A_1^rX^{-1/2}),\dots,
f_\sigma(\lambda^{1-r}X^{-1/2}A_n^rX^{-1/2})\bigr) \\
&\quad\ge M\bigl(f_\sigma(X^{-1/2}A_1X^{-1/2})^r,\dots,
f_\sigma(X^{-1/2}A_nX^{-1/2})^r\bigr) \\
&\quad\ge\lambda_{\min}^{r-1}\bigl(M\bigl(f_\sigma(X^{-1/2}A_1X^{-1/2}),\dots,
f_\sigma(X^{-1/2}A_nX^{-1/2})\bigr)\bigr) \\
&\qquad\times M\bigl(f_\sigma(X^{-1/2}A_1X^{-1/2}),\dots,
f_\sigma(X^{-1/2}A_nX^{-1/2})\bigr) \\
&\quad=\lambda_{\min}^{r-1}(I)I=I
\end{align*}
thanks to \eqref{F-3.7}. Hence \eqref{F-3.8} follows.

To prove \eqref{F-3.1} for general $r\ge1$, we use a simple induction argument. Assume that
\eqref{F-3.1} is true when $1\le r\le 2^k$, and extend it to $1\le r\le2^{k+1}$. When
$2^k\le r\le2^{k+1}$, letting $r=2r'$ with $1\le r'\le2^k$ one has
\begin{align*}
M_\sigma(A_1^r,\dots,A_n^r)
&\ge\lambda_{\min}(M_\sigma(A_1^{r'},\dots,A_n^{r'}))M(A_1^{r'},\dots,A_n^{r'}) \\
&\ge\lambda_{\min}\bigl(\lambda_{\min}^{r'-1}(M(A_1,\dots,A_n))M(A_1,\dots,A_n)\bigr) \\
&\quad\times\lambda_{\min}^{r'-1}(M(A_1,\dots,A_n))M(A_1,\dots,A_n) \\
&=\lambda_{\min}^{r-1}(M(A_1,\dots,A_n))M(A_1,\dots,A_n).
\end{align*}

The other assertion for \eqref{F-3.3} can be proved in a way similar to the above proof for
\eqref{F-3.1}, by reversing all the inequality signs and using Hansen's inequality \cite{Ha}
for the operator monotone function $x^r$ when $0<r\le1$. An additional argument in the last
of the above proof is unnecessary. The details may be omitted.
\end{proof}

Recently in \cite{Ya2}, Yamazaki proved two Ando-Hiai type inequalities for $n$-variable
generalized operator means in the sense of \cite{Pa}. For a probability vector $\omega$ and an
operator monotone function $g$ on $(0,\infty)$ with $g(1)=0$ and $g'(1)=1$, let
$\Lambda_{\omega,g}$ be the $n$-variable operator mean satisfying the equation given in
\eqref{F-2.5}. It was proved in \cite[Theorem 7]{Ya2} that the following conditions are
equivalent:
\begin{itemize}
\item[(a)] $\Lambda_{\omega,g}(A_1,\dots,A_n)\ge I$ $\implies$
$\Lambda_{\omega,g}(A_1^r,\dots,A_n^r)\ge I$, $r\ge1$, for all probability vectors
$\omega=(w_1,\dots,w_n)$ and $A_1,\dots,A_n\in\bP$;
\item[(b)] $g(x^r)\ge rg(x)$ for all $r\ge1$ and $x>0$.
\end{itemize}

Although \cite[Theorem 7]{Ya2} gives Ando-Hiai's inequalities, in particular, for the power
means $P_{\omega,\alpha}$ as well as the Karcher mean $G_\omega$, we independently state
corollaries for those means in the following, because our inequalities are in the stronger
form of \eqref{F-3.1} and \eqref{F-3.2} and include the complementary version of
\eqref{F-3.3} and \eqref{F-3.4} as well. We first give a simple lemma, whose proof is given
for completeness.

\begin{lemma}\label{L-3.2}
Inequalities \eqref{F-3.1} and \eqref{F-3.3} hold for $M=\cA_\omega$, and \eqref{F-3.2} and
\eqref{F-3.4} hold for $M^*=\cH_\omega$.
\end{lemma}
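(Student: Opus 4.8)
The plan is to prove the four inequalities for the weighted arithmetic mean $\cA_\omega$ and its adjoint $\cH_\omega$ directly, since for these means everything reduces to an explicit scalar/operator computation without any fixed‐point machinery. Because of the equivalences $\eqref{F-3.1}\iff\eqref{F-3.2}$ and $\eqref{F-3.3}\iff\eqref{F-3.4}$ noted just before Theorem \ref{T-3.1}, and the fact that $\cH_\omega=(\cA_\omega)^*$, it suffices to establish \eqref{F-3.1} and \eqref{F-3.3} for $M=\cA_\omega$; the statements for $\cH_\omega$ then follow formally by taking adjoints.

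First I would fix $A_1,\dots,A_n\in\bP$, put $Y:=\cA_\omega(A_1,\dots,A_n)=\sum_j w_jA_j$ and $\lambda:=\lambda_{\min}(Y)=\|Y^{-1}\|_\infty^{-1}$, so $\lambda^{1/2}Y^{-1/2}\le I$. For $r\ge1$ the function $x^r$ is operator convex on $[0,\infty)$, so Hansen–Pedersen's inequality \cite{HP} applied with the contraction $C=\lambda^{1/2}Y^{-1/2}$ gives, for each $j$,
\begin{align*}
\bigl(\lambda^{1/2}Y^{-1/2}\bigr)A_j^r\bigl(\lambda^{1/2}Y^{-1/2}\bigr)
\ \ge\ \bigl[\bigl(\lambda^{1/2}Y^{-1/2}\bigr)A_j\bigl(\lambda^{1/2}Y^{-1/2}\bigr)\bigr]^r,
\end{align*}
i.e. $\lambda^{1-r}Y^{-1/2}A_j^rY^{-1/2}\ge\bigl(Y^{-1/2}A_jY^{-1/2}\bigr)^r$. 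Multiplying by $w_j$ and summing over $j$, then using that $x\mapsto x^r$ is operator convex again (now in the form $\sum_j w_j C_j^r\ge(\sum_j w_j C_j)^r$ for the positive operators $C_j:=Y^{-1/2}A_jY^{-1/2}$, which is the standard Jensen operator inequality for operator convex functions and convex combinations), I get
\begin{align*}
\lambda^{1-r}Y^{-1/2}\cA_\omega(A_1^r,\dots,A_n^r)Y^{-1/2}
&=\sum_j w_j\,\lambda^{1-r}Y^{-1/2}A_j^rY^{-1/2}\\
&\ge\sum_j w_j\bigl(Y^{-1/2}A_jY^{-1/2}\bigr)^r
\ \ge\ \Bigl(\sum_j w_j\,Y^{-1/2}A_jY^{-1/2}\Bigr)^r = I,
\end{align*}
since $\sum_j w_j Y^{-1/2}A_jY^{-1/2}=Y^{-1/2}YY^{-1/2}=I$. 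Conjugating back by $Y^{1/2}$ yields $\cA_\omega(A_1^r,\dots,A_n^r)\ge\lambda^{r-1}Y=\lambda_{\min}^{r-1}(\cA_\omega(A_1,\dots,A_n))\,\cA_\omega(A_1,\dots,A_n)$, which is \eqref{F-3.1}.

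For \eqref{F-3.3}, with $0<r\le1$, I would run the identical argument with all inequality signs reversed: now $x\mapsto x^r$ is operator \emph{concave} (equivalently operator monotone), so Hansen's inequality \cite{Ha} for the contraction $C=\lambda^{1/2}Y^{-1/2}$ gives the reverse operator inequality, and Jensen's inequality for the operator concave function $x^r$ and the convex combination $\sum_j w_j C_j$ also reverses; combining as above produces $\lambda^{1-r}Y^{-1/2}\cA_\omega(A_1^r,\dots,A_n^r)Y^{-1/2}\le I$, hence \eqref{F-3.3}. Finally, taking adjoints: replacing $A_j$ by $A_j^{-1}$ and inverting turns \eqref{F-3.1} for $\cA_\omega$ into \eqref{F-3.2} for $\cH_\omega=(\cA_\omega)^*$ (using $\lambda_{\min}(\cA_\omega(A_1^{-1},\dots))^{-1}=\|\cH_\omega(A_1,\dots)\|_\infty$ and $r\mapsto r$), and similarly \eqref{F-3.3} becomes \eqref{F-3.4}. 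I do not anticipate a real obstacle here; the only point requiring a little care is making sure the correct version of the Hansen/Hansen–Pedersen inequality is invoked (operator convex $x^r$ for $r\in[1,2]$, extended to all $r\ge1$ either by the same inequality since $x^r$ is operator convex for $r\in[1,2]$ only — so one may need the induction-doubling trick as in the proof of Theorem \ref{T-3.1}, or simply note $x^r$ operator convex fails for $r>2$ and instead iterate $r\mapsto 2r'$), versus operator concave/monotone $x^r$ for $r\in(0,1]$; bookkeeping the constant $\lambda_{\min}$ through the adjoint correspondence is the other mild subtlety.
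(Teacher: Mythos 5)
Your argument is correct, but it takes a slightly different route from the paper's. The paper's proof is a two-line direct computation: for $1\le r\le2$ it applies the Jensen operator inequality straight to the $A_j$'s, $\sum_j w_jA_j^r\ge\bigl(\sum_j w_jA_j\bigr)^r$, and then uses the elementary spectral inequality $X^r\ge\lambda_{\min}^{r-1}(X)X$ (respectively $X^r\le\lambda_{\min}^{r-1}(X)X$ together with operator concavity for $0<r\le1$), finishing with the same doubling induction as in Theorem \ref{T-3.1} for $r>2$; the adjoint then handles $\cH_\omega$, exactly as you do. You instead conjugate by $Y^{-1/2}$ and invoke Hansen--Pedersen with the contraction $\lambda^{1/2}Y^{-1/2}$ before applying Jensen, which is in effect a replay of the proof of Theorem \ref{T-3.1} specialized to $\cA_\omega$; it works, but the contraction step is redundant here, since conjugating your final inequality back by $Y^{1/2}$ just reproduces what the spectral inequality $X^r\ge\lambda_{\min}^{r-1}(X)X$ gives directly. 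One point to tighten: your displayed derivation asserts operator convexity of $x^r$ for all $r\ge1$, which fails for $r>2$, so both the Hansen--Pedersen step and the Jensen step there are only valid for $1\le r\le2$; your closing caveat correctly identifies the repair (prove the case $1\le r\le2$ and extend by the $r\mapsto2r'$ doubling induction, which goes through verbatim for $\cA_\omega$ using monotonicity of $\lambda_{\min}$), and this is precisely how the paper handles it, so there is no genuine gap once that restriction is made explicit. The $0<r\le1$ case and the passage to $\cH_\omega$ via $\lambda_{\min}(\cA_\omega(A_1^{-1},\dots,A_n^{-1}))^{-1}=\|\cH_\omega(A_1,\dots,A_n)\|_\infty$ are handled correctly and match the paper.
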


\begin{proof}
Let $A_1,\dots,A_n\in\bP$. When $1\le r\le2$, operator convexity of $x^r$ gives
$$
\sum_{j=1}^nw_jA_j^r\ge\Biggl(\sum_{j=1}^nw_jA_j\Biggr)^r
\ge\lambda_{\min}^{r-1}\Biggl(\sum_{j=1}^nw_jA_j\Biggr)\sum_{j=1}^nw_jA_j.
$$
By an induction argument as in the proof of Theorem \ref{T-3.1} one can extend the above
inequality to general $r\ge1$. Moreover, when $0<r\le1$, since $x^r$ is operator concave and
$X\ge\lambda_{\min}^{1-r}(X)X^r$ for $X\in\bP$, one has
$$
\sum_{j=1}^nw_jA_j^r\le\Biggl(\sum_{j=1}^nw_jA_j\Biggr)^r
\le\lambda_{\min}^{r-1}\Biggl(\sum_{j=1}^nw_jA_j\Biggr)\sum_{j=1}^nw_jA_j.
$$
Hence \eqref{F-3.1} and \eqref{F-3.3} hold for $M=\cA_\omega$. The latter assertion follows
since $\cH_\omega=\cA_\omega^*$.
\end{proof}

In the following corollaries let $\omega=(w_1,\dots,w_n)$ be any probability vector.

\begin{cor}\label{C-3.3}
Let $0<\alpha\le1$ and $A_1,\dots,A_n\in\bP$. For any $r\ge1$,
\begin{align}
P_{\omega,\alpha}(A_1^r,\dots,A_n^r)
&\ge\lambda_{\min}^{r-1}(P_{\omega,\alpha}(A_1,\dots,A_n))P_{w,\alpha}(A_1,\dots,A_n),
\label{F-3.9}\\
P_{\omega,-\alpha}(A_1^r,\dots,A_n^r)
&\le\|P_{\omega,-\alpha}(A_1,\dots,A_n)\|_\infty^{r-1}P_{w,-\alpha}(A_1,\dots,A_n),
\label{F-3.10}
\end{align}
and for any $r\in(0,1]$,
\begin{align}
P_{\omega,\alpha}(A_1^r,\dots,A_n^r)
&\le\lambda_{\min}^{r-1}(P_{\omega,\alpha}(A_1,\dots,A_n))P_{w,\alpha}(A_1,\dots,A_n),
\label{F-3.11}\\
P_{\omega,-\alpha}(A_1^r,\dots,A_n^r)
&\ge\|P_{\omega,-\alpha}(A_1,\dots,A_n)\|_\infty^{r-1}P_{w,-\alpha}(A_1,\dots,A_n).
\label{F-3.12}
\end{align}
\end{cor}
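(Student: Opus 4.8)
The plan is to derive Corollary~\ref{C-3.3} directly from Theorem~\ref{T-3.1} together with Lemma~\ref{L-3.2}, using the identifications $P_{\omega,\alpha}=(\cA_\omega)_{\#_\alpha}$ and $P_{\omega,-\alpha}=(\cH_\omega)_{\#_\alpha}=(P_{\omega,\alpha})^*$ recorded in Example~\ref{E-2.4}\,(b). First I would observe that $\#_\alpha$ with representing function $x^\alpha$ is power monotone increasing: indeed $f_{\#_\alpha}(x^r)=x^{r\alpha}=(x^\alpha)^r=f_{\#_\alpha}(x)^r$ for all $x>0$ and $r\ge1$ (in fact equality holds), and $\#_\alpha\ne\frak l$ since $0<\alpha\le1$. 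Thus $\sigma=\#_\alpha$ satisfies the hypothesis of Theorem~\ref{T-3.1}.

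Next, by Lemma~\ref{L-3.2}, the $n$-variable arithmetic mean $M=\cA_\omega$ satisfies both \eqref{F-3.1} and \eqref{F-3.3} for all $A_1,\dots,A_n\in\bP$. Applying Theorem~\ref{T-3.1} with this $M$ and $\sigma=\#_\alpha$ yields that the deformed mean $M_\sigma=(\cA_\omega)_{\#_\alpha}=P_{\omega,\alpha}$ also satisfies \eqref{F-3.1} and \eqref{F-3.3}; written out, \eqref{F-3.1} for $P_{\omega,\alpha}$ is exactly \eqref{F-3.9} and \eqref{F-3.3} for $P_{\omega,\alpha}$ is exactly \eqref{F-3.11}.

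For the harmonic-type statements \eqref{F-3.10} and \eqref{F-3.12}, I would invoke the equivalences $\eqref{F-3.1}\iff\eqref{F-3.2}$ and $\eqref{F-3.3}\iff\eqref{F-3.4}$ noted just before Theorem~\ref{T-3.1}, applied to the mean $M_\sigma=P_{\omega,\alpha}$ whose adjoint is $M_\sigma^*=P_{\omega,-\alpha}$. Since $P_{\omega,\alpha}$ satisfies \eqref{F-3.1} (resp.\ \eqref{F-3.3}), its adjoint $P_{\omega,-\alpha}$ satisfies \eqref{F-3.2} (resp.\ \eqref{F-3.4}), which read precisely as \eqref{F-3.10} (resp.\ \eqref{F-3.12}). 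Alternatively one can argue directly: replace $A_j$ by $A_j^{-1}$ in \eqref{F-3.9}/\eqref{F-3.11}, take inverses, use $(P_{\omega,\alpha})^*=P_{\omega,-\alpha}$ and $\lambda_{\min}(Z^{-1})^{-1}=\|Z\|_\infty$, and note that raising to the power $r$ and then taking adjoints interchanges the roles of $r\ge1$ and $0<r\le1$ appropriately; either route is routine.

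The substance of the corollary is entirely contained in Theorem~\ref{T-3.1} and Lemma~\ref{L-3.2}, so there is no real obstacle here — the only points needing a line of verification are that $\#_\alpha$ is p.m.i.\ (immediate, with equality) and that $P_{\omega,-\alpha}=(P_{\omega,\alpha})^*$ so that the adjoint equivalences apply. If anything, the mildly delicate bookkeeping is making sure the norm $\|\cdot\|_\infty$ versus $\lambda_{\min}$ and the ranges $r\ge1$ versus $0<r\le1$ line up correctly when passing to adjoints, but this is exactly the content of the three equivalences stated in the text and requires no new idea.
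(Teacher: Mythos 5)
Your proposal is correct and follows exactly the paper's own proof: note that $\#_\alpha$ is p.m.i., that $P_{\omega,\alpha}=(\cA_\omega)_{\#_\alpha}$ and $P_{\omega,-\alpha}=(P_{\omega,\alpha})^*$, and then combine Lemma~\ref{L-3.2} with Theorem~\ref{T-3.1} and the adjoint equivalences $\eqref{F-3.1}\iff\eqref{F-3.2}$, $\eqref{F-3.3}\iff\eqref{F-3.4}$. One small correction to your parenthetical alternative: passing to adjoints does not interchange the ranges $r\ge1$ and $0<r\le1$ (the adjoint of \eqref{F-3.1} is \eqref{F-3.2} with the same range $r\ge1$, and likewise for \eqref{F-3.3} and \eqref{F-3.4}), but your main route via the stated equivalences is exactly right.
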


\begin{proof}
Note that $\#_\alpha$ is p.m.i., $P_{\omega,\alpha}=(\cA_\omega)_{\#_\alpha}$ and
$P_{\omega,-\alpha}=P_{\omega,\alpha}^*$ for $0<\alpha\le1$. Hence the corollary follows from
Theorem \ref{T-3.1} and Lemma \ref{L-3.2}.
\end{proof}

\begin{cor}\label{C-3.4}
Let $A_1,\dots,A_n\in\bP$. For any $r\ge1$,
\begin{align}
&\lambda_{\min}^{r-1}(G_\omega(A_1,\dots,A_n))G_\omega(A_1,\dots,A_n) \nonumber\\
&\qquad\le G_\omega(A_1^r,\dots,A_n^r)
\le\|G_\omega(A_1,\dots,A_n)\|_\infty^{r-1}G_\omega(A_1,\dots,A_n), \label{F-3.13}
\end{align}
and for any $r\in(0,1]$,
\begin{align}
&\|G_\omega(A_1,\dots,A_n)\|_\infty^{r-1}G_\omega(A_1,\dots,A_n) \nonumber\\
&\qquad\le G_\omega(A_1^r,\dots,A_n^r)
\le\lambda_{\min}^{r-1}(G_\omega(A_1,\dots,A_n))G_\omega(A_1,\dots,A_n). \label{F-3.14}
\end{align}
\end{cor}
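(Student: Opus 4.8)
The plan is to deduce Corollary~\ref{C-3.4} for the Karcher mean $G_\omega$ from Corollary~\ref{C-3.3} for the power means $P_{\omega,\pm\alpha}$ by passing to the limit $\alpha\searrow0$, using the order convergence facts \eqref{F-2.3} and \eqref{F-2.4} recorded in Example~\ref{E-2.4}\,(c). First I would fix $A_1,\dots,A_n\in\bP$ and $r\ge1$. Applying \eqref{F-3.9} with the tuple $(A_1,\dots,A_n)$ gives
$$
P_{\omega,\alpha}(A_1^r,\dots,A_n^r)\ge\lambda_{\min}^{r-1}(P_{\omega,\alpha}(A_1,\dots,A_n))\,P_{\omega,\alpha}(A_1,\dots,A_n),
$$
and I would let $\alpha\searrow0$: by \eqref{F-2.3} the left side decreases to $G_\omega(A_1^r,\dots,A_n^r)$, the factor $P_{\omega,\alpha}(A_1,\dots,A_n)$ decreases to $G_\omega(A_1,\dots,A_n)$, and since $\lambda_{\min}$ is continuous in the norm topology (indeed $\lambda_{\min}(X)=\|X^{-1}\|_\infty^{-1}$ and the convergences in \eqref{F-2.3}--\eqref{F-2.4} are also in norm on $\bP$ by the remark on the Thompson metric), the scalar $\lambda_{\min}^{r-1}(P_{\omega,\alpha}(A_1,\dots,A_n))$ converges to $\lambda_{\min}^{r-1}(G_\omega(A_1,\dots,A_n))$. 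This yields the left inequality in \eqref{F-3.13}. For the right inequality in \eqref{F-3.13} I would instead use \eqref{F-3.10}, i.e.
$$
P_{\omega,-\alpha}(A_1^r,\dots,A_n^r)\le\|P_{\omega,-\alpha}(A_1,\dots,A_n)\|_\infty^{r-1}\,P_{\omega,-\alpha}(A_1,\dots,A_n),
$$
and let $\alpha\searrow0$ using \eqref{F-2.4}: now both sides increase to the corresponding expressions with $G_\omega$, and $\|\cdot\|_\infty$ is likewise continuous, giving $G_\omega(A_1^r,\dots,A_n^r)\le\|G_\omega(A_1,\dots,A_n)\|_\infty^{r-1}G_\omega(A_1,\dots,A_n)$.

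For the range $0<r\le1$ in \eqref{F-3.14}, the argument is symmetric: the right inequality in \eqref{F-3.14} comes from \eqref{F-3.11} by letting $\alpha\searrow0$ along \eqref{F-2.3}, and the left inequality in \eqref{F-3.14} comes from \eqref{F-3.12} by letting $\alpha\searrow0$ along \eqref{F-2.4}. In each case the monotone convergence in \eqref{F-2.3} or \eqref{F-2.4} lets me pass an inequality between positive operators to the limit (order is preserved under SOT limits of monotone nets), and the scalar prefactors converge because $\lambda_{\min}$ and $\|\cdot\|_\infty$ are continuous along these convergences.

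The only point requiring a little care — and the place I would state explicitly rather than wave at — is the convergence of the scalar coefficients $\lambda_{\min}^{r-1}(\cdot)$ and $\|\cdot\|_\infty^{r-1}(\cdot)$. SOT convergence alone does not imply convergence of the operator norm or of $\lambda_{\min}$; what rescues us is that the convergences $P_{\omega,\alpha}\searrow G_\omega$ and $P_{\omega,-\alpha}\nearrow G_\omega$ are monotone in the operator order and all the operators involved lie in $\bP$, so by the equivalence of the $\dT$- and $\|\cdot\|_\infty$-topologies on $\bP$ recalled at the start of Section~2 these are in fact norm convergences; hence $\lambda_{\min}$ and $\|\cdot\|_\infty$, being norm-continuous on $\bP$, pass to the limit. (Alternatively, one can sandwich: from $P_{\omega,-\alpha}\le G_\omega\le P_{\omega,\alpha}$ one gets two-sided bounds on the scalars that squeeze to the right value.) Apart from this, the proof is a routine limiting argument, so in the write-up I would present the $r\ge1$ case of \eqref{F-3.13} in full and then say the remaining three inequalities follow in the same manner, citing \eqref{F-3.9}--\eqref{F-3.12} and \eqref{F-2.3}--\eqref{F-2.4} respectively.
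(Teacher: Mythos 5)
Your overall route is exactly the paper's: fix $A_1,\dots,A_n$, apply \eqref{F-3.9}--\eqref{F-3.12}, and let $\alpha\searrow0$ using \eqref{F-2.3}--\eqref{F-2.4}; the operator inequalities pass to the SOT limit and everything hinges on the convergence of the scalar prefactors. Where your write-up goes wrong is in the justification of that last point. Monotone SOT convergence inside $\bP$ does \emph{not} upgrade to convergence in the operator norm (equivalently in $\dT$) when $\cH$ is infinite-dimensional: on $\ell^2$ take $X_k=I+P_k$ with $P_k$ the orthogonal projection onto the coordinates $\ge k$; then $X_k\searrow I$ in SOT while $\|X_k-I\|_\infty=1$ for every $k$. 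So the equivalence of the $\dT$- and norm topologies on $\bP$ recalled in Section 2 cannot be invoked to make \eqref{F-2.3}--\eqref{F-2.4} into norm convergences, and your fallback sandwich $P_{\omega,-\alpha}\le G_\omega\le P_{\omega,\alpha}$ only bounds each scalar from one side, so it does not squeeze either.

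The convergences you need are nevertheless true, for a more elementary reason, and this is presumably what the paper means by ``one can easily verify'': if $X_\alpha\searrow X$ in SOT with $X\in\bP$, then $\lambda_{\min}(X_\alpha)\searrow\lambda_{\min}(X)$, because $\lambda_{\min}(X_\alpha)\ge\lambda_{\min}(X)$ and, given $\eps>0$, a unit vector $\xi$ with $\<X\xi,\xi\>< \lambda_{\min}(X)+\eps$ satisfies $\<X_\alpha\xi,\xi\>\to\<X\xi,\xi\>$, so $\lambda_{\min}(X_\alpha)<\lambda_{\min}(X)+2\eps$ eventually; dually, if $X_\alpha\nearrow X$ then $\|X_\alpha\|_\infty\nearrow\|X\|_\infty$ by $\|X\|_\infty=\sup_{\|\xi\|=1}\<X\xi,\xi\>$. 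Note these are exactly the two pairings that occur in \eqref{F-3.9}--\eqref{F-3.12}: $\lambda_{\min}$ always sits with the decreasing family $P_{\omega,\alpha}$ and $\|\cdot\|_\infty$ always with the increasing family $P_{\omega,-\alpha}$; the opposite pairings (e.g.\ $\|P_{\omega,\alpha}\|_\infty\to\|G_\omega\|_\infty$ along a decreasing SOT limit) would genuinely fail in infinite dimensions, as the example above shows. With the variational-formula argument substituted for the erroneous norm-convergence claim, your limiting proof is complete and coincides with the paper's.
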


\begin{proof}
For every $A_1,\dots,A_n\in\bP$, from \eqref{F-2.3} and \eqref{F-2.4} one can easily verify
that as $\alpha\searrow0$,
\begin{align*}
\lambda_{\min}(P_{\omega,\alpha}(A_1,\dots,A_n))
\ &\searrow\ \lambda_{\min}(G_\omega(A_1,\dots,A_n)), \\
\|P_{\omega,-\alpha}(A_1,\dots,A_n)\|_\infty
\ &\nearrow\ \|G_\omega(A_1,\dots,A_n)\|_\infty.
\end{align*}
Hence the corollary follows by taking the limits of the inequalities in
\eqref{F-3.9}--\eqref{F-3.12}.
\end{proof}

Ando-Hiai's inequality for the Karcher mean in \eqref{F-3.13} was proved in \cite{Ya1} for
positive definite matrices, which was further extended in \cite{KLL,HL} to the Cartan
barycenter of probability measures.

\begin{remark}\label{R-3.5}\em
One might think that the complementary inequality in \eqref{F-3.14} can follow from
\eqref{F-3.13} by replacing $r$ with $1/r$ and $A_j$ with $A_j^r$ in \eqref{F-3.13}. But this
is not the case, as seen below. Let $0<r\le1$ and replace $r$ with $1/r$ and $A_j$ with
$A_j^r$ in the second inequality of \eqref{F-3.13}. Then
$$
\|G_\omega(A_1^r,\dots,A_n^r)\|_\infty^{1-{1\over r}}G_\omega(A_1,\dots,A_n)
\le G_\omega(A_1^r,\dots,A_n^r).
$$
Although we have $\|G_\omega(A_1,\dots,A_n)\|_\infty^r\le\|G_\omega(A_1^r,\dots,A_m^r)\|_\infty$
from the weaker formulation of \eqref{F-3.13}, this argument cannot, due to $1-{1\over r}\le0$,
give the first inequality of \eqref{F-3.14}. However, note that the opposite direction works
well, that is, \eqref{F-3.13} follows from \eqref{F-3.14} by replacing $r$ with $1/r$ and $A_j$
with $A_j^r$. So \eqref{F-3.14} is new and considered as stronger than \eqref{F-3.13}.
\end{remark}

The next theorem says that an operator mean $\sigma$ is p.m.i.\ if $(G_\omega)_\sigma$
satisfies Ando-Hiai's inequality for any probability vector $\omega$, thus showing that the
p.m.i.\ assumption on $\sigma$ in Theorem \ref{T-3.1} is essential. Conditions (a) and (b)
above are incorporated as (v) and (vi) in the theorem.

\begin{thm}\label{T-3.6}
Let $\sigma$ be an operator mean with $\sigma\ne\frak{l}$, and $f:=f_\sigma$. Consider the
following conditions, where {\rm(ii)}--{\rm(v)} mean that the condition holds for all
probability vectors $\omega$ and all $A_1,\dots,A_n\in\bP$:
\begin{itemize}
\item[\rm(i)] $\sigma$ is p.m.i., i.e., $f(x^r)\ge f(x)^r$ for all $r\ge1$ and $x>0$;
\item[\rm(ii)] $(G_\omega)_\sigma(A_1^r,\dots,A_n^r)\ge
\lambda_{\min}^{r-1}((G_\omega)_\sigma(A_1,\dots,A_n))(G_\omega)_\sigma(A_1,\dots,A_n)$ for
all $r\ge1$;
\item[\rm(iii)] $(G_\omega)_\sigma(A_1,\dots,A_n)\ge I$ $\implies$
$(G_\omega)_\sigma(A_1^r,\dots,A_n^r)\ge I$, $r\ge1$;
\item[\rm(iv)] $(\cA_\omega)_{\sigma_{f^p}}(A_1,\dots,A_n)\ge I$ $\implies$
$(\cA_\omega)_{\sigma_{f^p}}(A_1^r,\dots,A_n^r)\ge I$, $r\ge1$, for every $p\in(0,1]$,
where $\sigma_{f^p}$ is the operator mean defined by the operator
monotone function $f(x)^p$;
\item[\rm(v)] $(\cA_\omega)_\sigma(A_1,\dots,A_n)\ge I$ $\implies$
$(\cA_\omega)_\sigma(A_1^r,\dots,A_n^r)\ge I$, $r\ge1$;
\item[$(${\rm vi}$)$] $f(x^r)\ge rf(x)-r+1$ for all $r\ge1$ and $x>0$.
\end{itemize}
Then the following relations hold:
$$
({\rm i}) \iff ({\rm ii}) \iff ({\rm iii}) \iff ({\rm iv}) \implies ({\rm v}) \iff ({\rm vi}).
$$
\end{thm}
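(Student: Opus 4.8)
\textbf{Proof proposal for Theorem \ref{T-3.6}.}

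The plan is to establish the chain of equivalences and implications by a combination of
direct applications of Theorem \ref{T-3.1} (together with known results about the Karcher
mean) and elementary manipulations relating the various reformulations. First I would
dispatch the implications that are nearly formal: the passage from the strong form to the
weaker form, $({\rm ii})\Rightarrow({\rm iii})$ and $({\rm iv})\Rightarrow({\rm v})$, is
immediate since $\lambda_{\min}^{r-1}(Y)\ge1$ whenever $Y\ge I$ and $r\ge1$. The implication
$({\rm i})\Rightarrow({\rm ii})$ is a direct application of Theorem \ref{T-3.1} with
$M=G_\omega$, once we recall from Corollary \ref{C-3.4} (equivalently, from Lemma
\ref{L-3.2} applied in the limit $\alpha\searrow0$) that $G_\omega$ itself satisfies
\eqref{F-3.1}; hence $(G_\omega)_\sigma$ does too. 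For the equivalence $({\rm v})\iff({\rm vi})$
I would invoke the result of \cite{Ya2} quoted in the excerpt (conditions (a)$\iff$(b) for
$\Lambda_{\omega,g}$): by Remark \ref{R-2.5}, $(\cA_\omega)_\sigma=\Lambda_{\omega,g_\sigma}$
with $g_\sigma(x)=(f(x)-1)/f'(1)$, and the condition $g_\sigma(x^r)\ge rg_\sigma(x)$
unwinds—using $f'(1)>0$ since $\sigma\ne\frak{l}$—precisely to $f(x^r)-1\ge r(f(x)-1)$,
i.e.\ (vi).

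The substantive part is the loop $({\rm i})\iff({\rm iii})\iff({\rm iv})$ and, more to the
point, the reverse implications $({\rm ii})\Rightarrow({\rm i})$ (or $({\rm iii})\Rightarrow({\rm i})$
and $({\rm iv})\Rightarrow({\rm i})$), which is where the assertion that p.m.i.\ is
\emph{necessary} must be extracted. The natural strategy is to specialize to $n=2$ and to
judiciously chosen commuting operators so that $(G_\omega)_\sigma$ and $(\cA_\omega)_{\sigma_{f^p}}$
become explicitly computable in terms of $f$. For commuting positive operators $A,B$ with
spectrum in a common set, the Karcher mean $G_w$ reduces to the scalar weighted geometric
mean $A^{1-w}B^w$, and then the deformation identity \eqref{F-2.2} shows that
$(G_w)_\sigma(A,B)=X$ where $X$ commutes with $A,B$ and $f_\sigma(X^{-1}A)^{1-w}f_\sigma(X^{-1}B)^w=1$
pointwise on the joint spectrum. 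Choosing $A=I$ (so that $f_\sigma(X^{-1})^{1-w}=f_\sigma(X^{-1}B)^{-w}$)
and letting $w\to1$, or rather choosing scalars cleverly, should let us read off, on the
level of the representing functions, that the scalar inequality characterizing (iii) forces
$f(x^r)\ge f(x)^r$. The key computational lemma I expect to need is the identification of
the representing function of $(G_w)_\sigma$ restricted to commuting pairs, analogous to
Theorem \ref{T-2.7} but with $\tau=\#_w$; indeed by Example \ref{E-2.8}(2) the two-variable
deformed mean $(\#_w)_\sigma$ has a representing function given implicitly, and $(G_w)_\sigma$
restricted to commuting operators coincides with $(\#_w)_\sigma$. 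Thus the whole argument on
commuting pairs is governed by the scalar function $g_{w,\sigma}$ of Example \ref{E-2.8}(2),
and the condition (iii) on scalars becomes a functional inequality for $g_{w,\sigma}^{-1}$
which, after letting $w\to1$ (where $g_{w,\sigma}(x)\to$ essentially $f_\sigma^{-1}$ of
something tending to a limit forcing $f$-rigidity), collapses to p.m.i.\ of $f$.

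For the equivalence with (iv), the point is the algebraic identity relating the deformation
$(\#_w)_\sigma$ to the deformation $(\triangledown_w)_{\sigma_{f^p}}$ for suitable $p$;
concretely, from Theorem \ref{T-2.7} one computes that the representing function of
$(\#_w)_\sigma$ equals that of $(\triangledown_w)_{\tau}$ where $\tau$ has representing
function a power of $f_\sigma$—reading \eqref{F-2.6} for $\tau=\#_w$ gives
$f(t/x)=f(1/x)^{(w-1)/w}$, and comparing with \eqref{F-2.7} one matches $f_\sigma^p$ with
$p=\ldots$ depending on $w$. Letting $w$ range over $(0,1)$ sweeps $p$ over $(0,1]$, which
is exactly the quantifier over $p$ appearing in (iv). Then $({\rm iii})\iff({\rm iv})$
follows by this change of parametrization together with $({\rm v})\iff({\rm vi})$ applied to
each $\sigma_{f^p}$ (noting $f^p$ is operator monotone with $f^p(1)=1$), and the closure of
the loop $({\rm iv})\Rightarrow({\rm i})$ comes from letting $p\to0$ in the inequality
$f(x^r)^p\ge pf(x^r)-p+1$—dividing by $p$ and sending $p\to0$ turns $(f(x^r)^p-1)/p\to\log f(x^r)$
and the right side $\to f(x^r)-1$... more carefully, one uses (vi) for $f^p$, namely
$f(x^r)^p\ge rf(x)^p-r+1$, rearranges to $(f(x^r)^p-1)/p\ge r(f(x)^p-1)/p$, and lets
$p\searrow0$ to obtain $\log f(x^r)\ge r\log f(x)$, i.e.\ (i).

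The main obstacle I anticipate is the bookkeeping in the commuting-operator reduction:
one must verify that restricting $G_\omega$ and the deformed means to a commutative
subalgebra does not lose information—i.e.\ that the \emph{scalar} versions of (ii)/(iii)/(iv)
already force the functional inequality on all of $(0,\infty)$—and that the limiting
procedures ($w\to1$ for (iii), $p\to0$ for (iv)) are legitimate given only that the
representing functions are operator monotone (hence continuous and differentiable a.e.,
with $f'(1)>0$). The derivative normalization $f_\sigma'(1)>0$, guaranteed by
$\sigma\ne\frak{l}$, is what makes all these limits nondegenerate, and I would be careful to
track it throughout. Everything else is a matter of assembling the implications in the order
$({\rm i})\Rightarrow({\rm ii})\Rightarrow({\rm iii})\Rightarrow({\rm i})$,
$({\rm i})\iff({\rm iv})$ via the $p$-parametrization, $({\rm iii})\Rightarrow({\rm v})$ and
$({\rm v})\iff({\rm vi})$ via \cite{Ya2} and Remark \ref{R-2.5}.
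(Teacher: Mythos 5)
Your formal steps ((i)$\Rightarrow$(ii) via Theorem \ref{T-3.1} and Corollary \ref{C-3.4}, (ii)$\Rightarrow$(iii), (iv)$\Rightarrow$(v), (v)$\iff$(vi) via Remark \ref{R-2.5} and \cite[Theorem 7]{Ya2}, and the $p\searrow0$ argument giving (iv)$\Rightarrow$(i)) are all correct and coincide with the paper's. But the two substantive reverse links are not actually established. For (iii)$\Rightarrow$(i), reducing to the two-variable mean $(\#_w)_\sigma=(G_\omega)_\sigma(A,\dots,A,B)$ is indeed the right first move, and \cite[Lemma 2.1]{Wa} then tells you that the representing function $f_w$ of $(\#_w)_\sigma$ is p.m.i.; the problem is your proposed $w\to1$ limit: as $w\to1$ equation \eqref{F-2.8} forces $f_w(t)\to t$, i.e.\ $(\#_w)_\sigma\to\frak{r}$, so the limit degenerates and carries no information about $f$. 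The paper closes this gap differently: rewriting \eqref{F-2.8} as $(1-w)\log f(1/f_w(t))+w\log f(t/f_w(t))=0$ and dividing by $f'(1)>0$ identifies $(\#_w)_\sigma$ with the generalized operator mean $\Lambda_{(1-w,w),g}$ of \eqref{F-2.5} for $g(x)=\frac{1}{f'(1)}\log f(x)$, and then \cite[Theorem 7]{Ya2} yields $g(x^r)\ge rg(x)$, which is exactly $f(x^r)\ge f(x)^r$. Producing this $g$ is the key step your sketch never reaches.

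The second gap is your link between (iii) and (iv): the claimed identification of $(\#_w)_\sigma$ with $(\triangledown_w)_{\sigma_{f^p}}$ for a suitable $p=p(w)$ is false. By \eqref{F-2.8} the former is governed by the multiplicative relation $f(1/x)^{1-w}f(t/x)^{w}=1$, whereas by \eqref{F-2.7} (with $f$ replaced by $f^p$) the latter is governed by the additive relation $(1-w)f(1/x)^p+wf(t/x)^p=1$; no choice of $p$ converts one into the other (the geometric combination only arises as the $p\to0$ limit of the additive one), so the quantifier over $p$ in (iv) cannot be absorbed into the quantifier over $w$. The correct, and much simpler, route to (i)$\Rightarrow$(iv) is direct: if $f$ is p.m.i.\ then $f(x^r)^p\ge f(x)^{rp}$, so each $\sigma_{f^p}$ is p.m.i., and Theorem \ref{T-3.1} together with Lemma \ref{L-3.2} gives (iv); combined with your (correct) $p\searrow0$ argument for (iv)$\Rightarrow$(i) and the repaired (iii)$\Rightarrow$(i), this assembles the full chain as in the paper.
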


\begin{proof}
(i)\,$\implies$\,(ii) is seen from Theorem \ref{T-3.1} and Corollary \ref{C-3.4}.

(ii)\,$\implies$\,(iii) is obvious.

(iii)\,$\implies$\,(i).\enspace
For every $w\in(0,1]$, apply (iii) to $A\#_wB=G_\omega(A,\dots,A,B)$ where
$\omega=((1-w)/(n-1),\dots,(1-w)/(n-1),w)$; then we see that for any $r\ge1$,
$$
A,B\in\bP,\ \ A(\#_w)_\sigma B\ge I \implies A^r(\#_w)_\sigma B^r\ge I.
$$
Hence \cite[Lemma 2.1]{Wa} implies that the representing function $f_w$ of $(\#_w)_\sigma$ is
p.m.i. As mentioned in Example \ref{E-2.8}\,(2), note that for any $t>0$, $x=f_w(t)$ is the
solution to \eqref{F-2.8}. Since $f'(1)>0$ due to $\sigma\ne\frak{l}$, one can write
$$
(1-w){1\over f'(1)}\log f\biggl({1\over f_w(t)}\biggr)
+w\,{1\over f'(1)}\log f\biggl({t\over f_w(t)}\biggr)=0,\qquad t>0.
$$
This is the equation of \cite[(3.1)]{Ya2} for $g(x):={1\over f'(1)}\log f(x)$, an operator
monotone function on $(0,\infty)$ with $g(1)=0$ and $g'(1)=1$. Hence \cite[Theorem 7]{Ya2}
implies that $g(x^r)\ge rg(x)$, i.e., $f(x^r)\ge f(x)^r$ for all $r\ge1$ and $x>0$.

(i)\,$\implies$\,(iv).\enspace
If $\sigma$ is p.m.i., then so is $\sigma_{f^p}$ for any $p\in(0,1]$. Hence (iv) follows from
Theorem \ref{T-3.1} and Lemma \ref{L-3.2} (or from \cite[Theorem 7]{Ya2}).

(iv)\,$\implies$\,(v) is obvious.

(v)\,$\iff$\,(vi).\enspace
Letting $g(x):=(f(x)-1)/f'(1)$ for $x>0$, note that $(\cA_\omega)_\sigma=\Lambda_{\omega,g}$,
where $\Lambda_{\omega,g}$ is as in condition (a) above (see also Remark \ref{R-2.5}).
Hence the equivalence of (v) and (vi) reduces to that of (a) and (b) in \cite[Theorem 7]{Ya2}.

(iv)\,$\implies$\,(i).\enspace
By (v)\,$\iff$\,(vi), condition (iv) implies that $f(x^r)^p\ge rf(x)^p-r+1$ for all $r\ge1$,
$x>0$ and $p\in(0,1]$. Hence one has
$$
{f(x^r)^p-1\over p}\ge r\,{f(x)^p-1\over p}.
$$
Letting $p\searrow0$ gives $\log f(x^r)\ge r\log f(x)$, implying (i).
\end{proof}

\begin{example}\label{E-3.7}\rm
We here supply an example of operator monotone functions $f$ on $(0,\infty)$ with $f(1)=1$
showing that (i) is strictly stronger than (vi) in Theorem \ref{T-3.6}. Let
$$
f(x):={1\over4}\biggl({x+1\over2}\biggr)+{3\over4}\biggl({2x\over x+1}\biggr),
$$
a convex combination of the representing functions of the arithmetic and the harmonic means. Since
$$
f(2^3)={59\over24}<{1331\over512}=f(2)^3,
$$
the function $f$ does not satisfy (i). On the other hand, compute
$$
xf''(x)+f'(x)={x^3+3x^2-9x+13\over8(x+1)^3}>0,\qquad x>0.
$$
For any fixed $x>0$ we have
$$
{d^2\over dr^2}\,f(x^r)={d\over dr}(f'(x^r)x^r\log x)
=(x^rf''(x^r)+f'(x^r))x^r(\log x)^2\ge0,\qquad r>0,
$$
which means that $r\in(0,\infty)\mapsto f(x^r)$ is a convex function. Therefore, for every
$r>1$,
$$
{f(x^r)-f(x)\over r-1}\ge f(x)-1,
$$
showing that $f$ satisfies (vi).
\end{example}

\begin{remark}\label{R-3.8}\rm
Assume that $M:\bP^n\to\bP$ is an $n$-variable operator mean and satisfies
\begin{align}\label{F-3.15}
\cH_\omega\le M\le\cA_\omega
\end{align}
for some probability vector $\omega=(w_1,\dots,w_n)$. Then for any operator mean $\sigma$ with
$\alpha:=f_\sigma'(1)\in(0,1]$, since $\cH_\omega=(\cH_\omega)_{!_\alpha}\le M_\sigma\le
(\cA_\omega)_{\triangledown_\alpha}=\cA_\omega$ as easily shown by use of Theorem
\ref{T-2.1}\,(3), it follows that $M_\sigma$ and hence $M_\sigma^*$ satisfy \eqref{F-3.15}
again, which gives the Lie-Trotter formula
\begin{align}\label{F-3.16}
\lim_{p\to0}M_\sigma(A_1^p,\dots,A_n^p)^{1/p}=\exp\Biggl(\sum_{j=1}^nw_j\log A_j\Biggr)
\end{align}
for every $A_1,\dots,A_n\in\bP$ (as in, e.g., \cite{FFS,HL}). Now, assume further that $M$
satisfies \eqref{F-3.1} and $\sigma$ is a p.m.i.\ operator mean with $\sigma\ne\frak{l}$. Since
\eqref{F-3.2} holds for $M_\sigma^*$ by Theorem \ref{T-3.1}, we see by \eqref{F-3.16}
for $M_\sigma^*$ that for every $A_1,\dots,A_n\in\bP$,
$\big\|M_\sigma^*(A_1^p,\dots,A_n^p)\big\|_\infty^{1/p}$ increases to
$\big\|\exp\bigl(\sum_{j=1}^nw_j\log A_j\bigr)\big\|_\infty$ as $p\searrow0$. In particular,
this holds for $M_\sigma^*=P_{\omega,\alpha}$ for $0<\alpha\le1$ when applied to $M=\cA_\omega$
and $\sigma=\#_\alpha$. When $A_1,\dots,A_n$ are positive definite matrices, it also follows
from $P_{\omega,-\alpha}\le G_\omega$ and the log-majorization \cite[(3.20)]{HiPe} that
$$
\UIN{P_{\omega,-\alpha}(A_1^p,\dots,A_n^p)^{1/p}}
\le\UIN{G_\omega(A_1^p,\dots,A_n^p)^{1/p}}
\le\UIN{\exp\Biggl(\sum_{j=1}^nw_j\log A_j\Biggr)}
$$
for any unitarily invariant norm $\UIN{\,\cdot\,}$.
\end{remark}

\section{Modified inequalities}

In this section assume that $M$ is a general $n$-variable operator mean and $\sigma$ is any
operator mean with $\sigma\ne\frak{l}$. We present two more inequalities of Ando-Hiai type
for the deformed mean $M_\sigma$ without the assumption of p.m.i.\ for $\sigma$.

\begin{thm}\label{T-4.1}
For every $A_1,\dots,A_n\in\bP$ and any $r\ge1$,
\begin{align}
&\lambda_{\min}^{r-1}(M_\sigma(A_1,\dots,A_n))M_\sigma(A_1,\dots,A_n) \nonumber\\
&\qquad\le M_{\sigma_{1/r}}(A_1^r,\dots,A_n^r)
\le\|M_\sigma(A_1,\dots,A_n)\|_\infty^{r-1}M_\sigma(A_1,\dots,A_n), \label{F-4.1}
\end{align}
where $\sigma_{1/r}$ is the operator mean with the representing function $f_\sigma(x^{1/r})$. 
\end{thm}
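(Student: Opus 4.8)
The plan is to reduce both inequalities in \eqref{F-4.1} to an application of Theorem \ref{T-2.1}\,(3), exactly as in the proof of Theorem \ref{T-3.1}, but now playing off the defining equation of $M_\sigma$ at the original operators against the defining equation of $M_{\sigma_{1/r}}$ at the $r$-th powers. Write $X:=M_\sigma(A_1,\dots,A_n)$, $\lambda:=\lambda_{\min}(X)$ and $\mu:=\|X\|_\infty$. By \eqref{F-2.2} we have $I=M\bigl(f_\sigma(X^{-1/2}A_1X^{-1/2}),\dots,f_\sigma(X^{-1/2}A_nX^{-1/2})\bigr)$. For the right-hand inequality it suffices, by Theorem \ref{T-2.1}\,(3) applied to $M_{\sigma_{1/r}}$ and the operators $A_j^r$, to show that $Y:=\mu^{r-1}X$ satisfies $Y\ge M\bigl(Y\sigma_{1/r}A_1^r,\dots,Y\sigma_{1/r}A_n^r\bigr)$, i.e.
\begin{align*}
I\ge M\bigl(f_\sigma((\mu^{1-r}X^{-1/2}A_1^rX^{-1/2})^{1/r}),\dots,
f_\sigma((\mu^{1-r}X^{-1/2}A_n^rX^{-1/2})^{1/r})\bigr).
\end{align*}
So the task is an operator inequality comparing $(\mu^{1-r}X^{-1/2}A_j^rX^{-1/2})^{1/r}$ with $X^{-1/2}A_jX^{-1/2}$, and then invoking monotonicity of $M$ and of $f_\sigma$ to collapse the right-hand side to $I$ via \eqref{F-2.2}.

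The key analytic step is the pointwise comparison. For the upper bound I would use that $\mu^{-1/2}X^{1/2}\le I$ together with operator convexity of $x\mapsto x^r$ ($r\ge1$) in Hansen--Pedersen form to get, from $T:=X^{-1/2}A_jX^{-1/2}$,
$$
\mu^{1-r}X^{-1/2}A_j^rX^{-1/2}
=\bigl(\mu^{-1/2}X^{1/2}\bigr)^{-1}\!\bigl[\,\cdots\,\bigr]\bigl(\mu^{-1/2}X^{1/2}\bigr)^{-1},
$$
the right reading being that $\bigl(\mu^{-1/2}X^{1/2}\bigr)T^r\bigl(\mu^{-1/2}X^{1/2}\bigr)\le\bigl[\bigl(\mu^{-1/2}X^{1/2}\bigr)T\bigl(\mu^{-1/2}X^{1/2}\bigr)\bigr]^r$, hence after conjugating back, $\mu^{1-r}X^{-1/2}A_j^rX^{-1/2}\le T^r$; applying $x\mapsto x^{1/r}$ (operator monotone) gives $(\mu^{1-r}X^{-1/2}A_j^rX^{-1/2})^{1/r}\le T$, and then monotonicity of $f_\sigma$ and of $M$ plus \eqref{F-2.2} yield the bound $I$. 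For the lower bound in \eqref{F-4.1}, I would run the mirror argument: use $\lambda^{-1/2}X^{1/2}\ge I$ and operator convexity to obtain $\lambda^{1-r}X^{-1/2}A_j^rX^{-1/2}\ge T^r$, hence $(\lambda^{1-r}X^{-1/2}A_j^rX^{-1/2})^{1/r}\ge T$, and conclude via Theorem \ref{T-2.1}\,(3) (the second half, with $\ge$) that $\lambda^{r-1}X\le M_{\sigma_{1/r}}(A_1^r,\dots,A_n^r)$.

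I expect the main obstacle to be handling the two conjugating factors correctly: unlike in Theorem \ref{T-3.1}, here the exponent on $f_\sigma$ has been absorbed into $\sigma_{1/r}$, so the p.m.i.\ hypothesis is no longer available and the only leverage is the Hansen--Pedersen/Hansen operator (convexity) inequality applied with the contraction $\mu^{-1/2}X^{1/2}$ (resp.\ the expansion $\lambda^{-1/2}X^{1/2}$). One must be careful that $x^r$ is operator convex only for $1\le r\le2$, so a dyadic induction as in the proof of Theorem \ref{T-3.1} is needed to pass from $1\le r\le2$ to all $r\ge1$; in the induction step one writes $r=2r'$ and composes the two bounds of \eqref{F-4.1} at exponent $r'$, using that $\sigma_{1/r}=(\sigma_{1/r'})_{1/2}$ in the appropriate sense and that $\lambda_{\min}$ and $\|\cdot\|_\infty$ behave multiplicatively under the scalar factors. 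Once the $1\le r\le2$ case and this bookkeeping are in place, the rest is the routine chain of monotonicity applications, which I would not spell out in detail.
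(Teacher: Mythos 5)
Your proposal reaches the theorem, but by a genuinely different route from the paper's. The reduction is identical: set $X:=M_\sigma(A_1,\dots,A_n)$, use \eqref{F-2.2}, and apply Theorem \ref{T-2.1}\,(3) to the candidates $\|X\|_\infty^{r-1}X$ and $\lambda_{\min}^{r-1}(X)X$. The difference is the pointwise operator inequality. The paper applies Hansen's inequality to the operator \emph{monotone} function $t^{1/r}$ with the contraction $C:=\|X\|_\infty^{-1/2}X^{1/2}$, getting $(C^{-1}A_j^rC^{-1})^{1/r}\le C^{-1}A_jC^{-1}$, i.e.\ $\|X\|_\infty^{{1\over r}-1}(X^{-1/2}A_j^rX^{-1/2})^{1/r}\le X^{-1/2}A_jX^{-1/2}$, which is valid for every $r\ge1$ in one stroke; and it deduces the left-hand inequality of \eqref{F-4.1} from the right-hand one by passing to $M^*$, $\sigma^*$, $A_j^{-1}$ via $(\sigma^*)_{1/r}=(\sigma_{1/r})^*$. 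You instead use operator \emph{convexity} of $t^r$ (Hansen--Pedersen) and then take $1/r$-th roots, which is exactly what forces the restriction $1\le r\le2$ and the dyadic induction; your bookkeeping identity $\sigma_{1/r}=(\sigma_{1/r'})_{1/2}$ is correct and the induction does close, just as in the proof of Theorem \ref{T-3.1}, and you treat the two bounds by mirrored direct arguments rather than by duality. So your route is valid but longer: the paper's choice of $t^{1/r}$ is precisely what removes both the induction and the separate treatment of the lower bound.

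One direction slip should be repaired. With the contraction $C=\|X\|_\infty^{-1/2}X^{1/2}$ and $T:=X^{-1/2}A_jX^{-1/2}$, Hansen--Pedersen gives $(CTC)^r\le CT^rC$ (for $1\le r\le2$), not $CT^rC\le(CTC)^r$ as you displayed. Since $CTC=\|X\|_\infty^{-1}A_j$, conjugating the \emph{correct} inequality by $C^{-1}$ yields exactly your stated conclusion $\|X\|_\infty^{1-r}X^{-1/2}A_j^rX^{-1/2}\le T^r$; as literally written, your intermediate inequality is false and conjugating it would give the opposite of what you need. The conclusion you go on to use is nevertheless the true one, so the argument survives once the sign is fixed. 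The lower-bound step $\lambda_{\min}^{1-r}(X)\,X^{-1/2}A_j^rX^{-1/2}\ge T^r$ is the same Hansen--Pedersen inequality already used in the proof of Theorem \ref{T-3.1} (with the contraction $\lambda_{\min}^{1/2}(X)X^{-1/2}$), so that half is fine for $1\le r\le2$, with the induction covering larger $r$.
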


\begin{proof}
Let $A_1,\dots,A_n\in\bP$ and $r\ge1$. The first inequality in \eqref{F-4.1} follows from the
second inequality. Indeed, replace $M$, $\sigma$ and $A_j$ in the second with $M^*$,
$\sigma^*$ and $A_j^{-1}$; then since $(\sigma^*)_{1/r}=(\sigma_{1/r})^*$ so that
$(M^*)_{(\sigma^*)_{1/r}}=(M_{\sigma_{1/r}})^*$ as well as $(M^*)_{\sigma^*}=(M_\sigma)^*$,
we have
$$
M_{\sigma_{1/r}}(A_1^r,\dots,A_n^r)^{-1}
\le\|M_\sigma(A_1,\dots,A_n)^{-1}\|_\infty^{r-1}M_\sigma(A_1,\dots,A_n)^{-1},
$$
which gives
$$
M_{\sigma_{1/r}}(A_1^r,\dots,A_n^r)
\ge\lambda_{\min}^{r-1}(M_\sigma(A_1,\dots,A_n))M_\sigma(A_1,\dots,A_n).
$$
So we may prove the second inequality only. Let $X:=M_\sigma(A_1,\dots,A_n)$; then we have
\eqref{F-3.7}. By Theorem \ref{T-2.1}\,(3) it suffices to prove that
$$
\|X\|_\infty^{r-1}X
\ge M\bigl(\bigl(\|X\|_\infty^{r-1}X\bigr)\sigma_{1/r}A_1^r,\dots,
\bigl(\|X\|_\infty^{r-1}X\bigr)\sigma_{1/r}A_n^r\bigr),
$$
or equivalently,
$$
I\ge M\bigl(
f_\sigma\bigl(\|X\|_\infty^{{1\over r}-1}(X^{-1/2}A_1^rX^{-1/2})^{1/r}\bigr),
\dots,f_\sigma\bigl(\|X\|_\infty^{{1\over r}-1}(X^{-1/2}A_n^rX^{-1/2})^{1/r}
\bigr)\bigr).
$$
Since $C:=\|X\|_\infty^{-1/2}X^{1/2}\le I$, Hansen's inequality \cite{Ha} gives
$C(C^{-1}A_j^rC^{-1})^{1/r}C\le A_j$ so that $(C^{-1}A_j^rC^{-1})^{1/r}\le C^{-1}A_jC^{-1}$,
which implies that
$$
\|X\|_\infty^{{1\over r}-1}(X^{-1/2}A_j^rX^{-1/2})^{1/r}\le X^{-1/2}A_jX^{-1/2}.
$$
Therefore, it follows from the monotonicity of $f_\sigma$ and $M$ that
\begin{align*}
&M\bigl(f_\sigma\bigl(\|X\|_\infty^{{1\over r}-1}(X^{-1/2}A_1^rX^{-1/2})^{1/r}\bigr),
\dots,f_\sigma\bigl(\|X\|_\infty^{{1\over r}-1}(X^{-1/2}A_n^rX^{-1/2})^{1/r}
\bigr)\bigr) \\
&\qquad\le M\bigl(f_\sigma(X^{-1/2}A_1X^{-1/2}),\dots,
f_\sigma(X^{-1/2}A_nX^{-1/2})\bigr)=I
\end{align*}
thanks to \eqref{F-3.7}. Hence the second inequality of \eqref{F-4.1} follows.
\end{proof}

Remark that Theorem \ref{T-4.1} is similar to \cite[Theorem 6]{Ya2} while $M$ in Theorem
\ref{T-4.1} is a general $n$-variable operator mean and our inequality is in the stronger form
of \eqref{F-3.1} and \eqref{F-3.2}.

\begin{thm}\label{T-4.2}
For every $A_1,\dots,A_n\in\bP$ and any $r\in(0,1]$,
\begin{align}
&\|M_{\sigma_r}(A_1,\dots,A_n)\|_\infty^{r-1}M_{\sigma_r}(A_1,\dots,A_n) \nonumber\\
&\qquad\le M_\sigma(A_1^r,\dots,A_n^r)
\le\lambda_{\min}^{r-1}(M_{\sigma_r}(A_1,\dots,A_n))M_{\sigma_r}(A_1,\dots,A_n), \label{F-4.2}
\end{align}
where $\sigma_r$ is the operator mean with the representing function $f_\sigma(x^r)$.
\end{thm}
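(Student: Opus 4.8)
The plan is to mimic the proof of Theorem~\ref{T-4.1}, but now adapted to the range $0<r\le1$ where the function $x^r$ is operator concave rather than operator convex. First I would reduce the two inequalities in \eqref{F-4.2} to a single one: replacing $M$, $\sigma$, $A_j$ by $M^*$, $\sigma^*$, $A_j^{-1}$ and using the identities $(\sigma^*)_r=(\sigma_r)^*$, $(M^*)_{(\sigma^*)_r}=(M_{\sigma_r})^*$ and $(M^*)_{\sigma^*}=(M_\sigma)^*$ (exactly as in Theorem~\ref{T-4.1}), the left-hand inequality transforms into the right-hand one with $A_j^{-1}$ in place of $A_j$, so it suffices to prove, say, the left inequality
$$
\|M_{\sigma_r}(A_1,\dots,A_n)\|_\infty^{r-1}M_{\sigma_r}(A_1,\dots,A_n)\le M_\sigma(A_1^r,\dots,A_n^r).
$$

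Next I would set $X:=M_{\sigma_r}(A_1,\dots,A_n)$, so that by \eqref{F-2.2} applied to $M$ and $\sigma_r$ (whose representing function is $f_\sigma(x^r)$) we have
$$
I=M\bigl(f_\sigma((X^{-1/2}A_1X^{-1/2})^r),\dots,f_\sigma((X^{-1/2}A_nX^{-1/2})^r)\bigr).
$$
By Theorem~\ref{T-2.1}\,(3), to conclude $\|X\|_\infty^{r-1}X\le M_\sigma(A_1^r,\dots,A_n^r)$ it is enough to check
$$
\|X\|_\infty^{r-1}X\le M\bigl((\|X\|_\infty^{r-1}X)\sigma A_1^r,\dots,(\|X\|_\infty^{r-1}X)\sigma A_n^r\bigr),
$$
which, upon applying congruence invariance (II) and the definition of $\sigma$ via $f_\sigma$, is equivalent to
$$
I\le M\bigl(f_\sigma\bigl(\|X\|_\infty^{1-r}X^{-1/2}A_1^rX^{-1/2}\bigr),\dots,f_\sigma\bigl(\|X\|_\infty^{1-r}X^{-1/2}A_n^rX^{-1/2}\bigr)\bigr).
$$
So the crux is the operator inequality $\|X\|_\infty^{1-r}X^{-1/2}A_j^rX^{-1/2}\ge(X^{-1/2}A_jX^{-1/2})^r$ for each $j$, after which monotonicity of $f_\sigma$ and of $M$ finishes the argument against the displayed identity for $I$.

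To get that operator inequality I would use Hansen's inequality \cite{Ha} for the operator monotone (hence operator concave, with concavity normalized at $0$) function $x^r$, $0<r\le1$: with $C:=\|X\|_\infty^{-1/2}X^{1/2}$, which satisfies $C^*C=\|X\|_\infty^{-1}X\le I$, Hansen's inequality yields $(C^*A_j^rC)\le \|X\|_\infty^{\,?}\,(C^*A_jC)^r$ — more precisely I would run the computation so that $C(C^{-1}A_jC^{-1})^rC\ge$ or $\le A_j^r$ in the appropriate direction, track the scalar factor $\|X\|_\infty$ coming out of $C^*C\le I$, and rearrange to reach $\|X\|_\infty^{1-r}X^{-1/2}A_j^rX^{-1/2}\ge(X^{-1/2}A_jX^{-1/2})^r$. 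This is the step I expect to be the main obstacle, since one must be careful about which of Hansen's or Hansen--Pedersen's inequalities applies, in which direction it runs for $0<r\le1$, and exactly how the norm factor $\|X\|_\infty^{1-r}$ (note $1-r\ge0$ here) is produced — this is the mirror image of the $C\le I$, $(C^{-1}A_j^rC^{-1})^{1/r}\le C^{-1}A_jC^{-1}$ step in Theorem~\ref{T-4.1}, but with the roles of $r$ and $1/r$ and the direction of the inequality swapped. Once this scalar-factor bookkeeping is done correctly, the remainder is routine monotonicity and an appeal to Theorem~\ref{T-2.1}\,(3), and the details may be omitted as in the companion theorems.
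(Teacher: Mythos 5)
Your proposal is correct and is essentially the paper's own argument in mirror image: the paper reduces \eqref{F-4.2} to its right-hand inequality and applies Hansen's inequality with the contraction $\lambda_{\min}^{1/2}(X)X^{-1/2}\le I$, whereas you reduce (via the same adjoint identities) to the left-hand inequality and use $C:=\|X\|_\infty^{-1/2}X^{1/2}\le I$. The step you flagged with a question mark goes through with no extra bookkeeping: setting $D:=X^{-1/2}A_jX^{-1/2}$, Hansen's inequality gives $CD^rC\le(CDC)^r=\|X\|_\infty^{-r}A_j^r$, and conjugating by $C^{-1}$ yields exactly $(X^{-1/2}A_jX^{-1/2})^r\le\|X\|_\infty^{1-r}X^{-1/2}A_j^rX^{-1/2}$, after which monotonicity of $f_\sigma$ and $M$ together with Theorem \ref{T-2.1}\,(3) finish the proof as you describe.
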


\begin{proof}
Similarly to \eqref{F-4.1} the first inequality in \eqref{F-4.2} follows from the second, so
we may prove the latter only. Assume that $0<r\le1$. Let $X:=M_{\sigma_r}(A_1,\dots,A_n)$
and $\lambda:=\lambda_{\min}(X)$. We have
\begin{align}\label{F-4.3}
I=M\bigl(f_{\sigma_r}(X^{-1/2}A_1X^{-1/2}),\dots,f_{\sigma_r}(X^{-1/2}A_nX^{-1/2})\bigr).
\end{align}
By Theorem \ref{T-2.1}\,(3) it suffices to prove that
$$
\lambda^{r-1}X\ge M\bigl((\lambda^{r-1}X)\sigma A_1^r,\dots,
(\lambda^{r-1}X)\sigma A_n^r\bigr),
$$
or equivalently,
$$
I\ge M\bigl(f_\sigma(\lambda^{1-r}X^{-1/2}A_1^rX^{-1/2}),\dots,
f_\sigma(\lambda^{1-r}X^{-1/2}A_n^rX^{-1/2})\bigr).
$$
Since $\lambda^{1/2}X^{-1/2}\le I$, it follows from Hansen's inequality \cite{Ha} that
$$
\lambda X^{-1/2}A_j^rX^{-1/2}\le(\lambda X^{-1/2}A_jX^{-1/2})^r,
$$
and hence $\lambda^{1-r}X^{-1/2}A_j^rX^{-1/2}\le(X^{-1/2}A_jX^{-1/2})^r$. Therefore,
\begin{align*}
&M\bigl(f_\sigma(\lambda^{1-r}X^{-1/2}A_1^rX^{-1/2}),\dots,
f_\sigma(\lambda^{1-r}X^{-1/2}A_n^rX^{-1/2})\bigr) \\
&\quad\le M\bigl(f_\sigma((X^{-1/2}A_1X^{-1/2})^r),\dots,
f_\sigma((X^{-1/2}A_nX^{-1/2})^r)\bigr) \\
&\quad=M\bigl(f_{\sigma_r}(X^{-1/2}A_1X^{-1/2}),\dots,
f_{\sigma_r}(X^{-1/2}A_nX^{-1/2})\bigr)=I
\end{align*}
thanks to \eqref{F-4.3}.
\end{proof}

Inequalities \eqref{F-4.1} and \eqref{F-4.2} are modifications of \eqref{F-3.1}--\eqref{F-3.4}
in the previous section, where $M_\sigma$ in the either left or right side is replaced with
$M_{\sigma_{1/r}}$ in \eqref{F-4.1} or $M_{\sigma_r}$ in \eqref{F-4.2}. On the other hand,
there are no restrictions on $M$ and $\sigma$ in \eqref{F-4.1} and \eqref{F-4.2}, while we set
additional assumptions on $M$ and $\sigma$ in Theorem \ref{T-3.1}.

\begin{remark}\label{R-4.3}\rm
Let $r\ge1$ and $A_1,\dots,A_n\in\bP$. Theorems \ref{T-4.1} and \ref{T-4.2} in particular
contain the following implications:
\begin{align*}
M_\sigma(A_1,\dots,A_n)\ge I\ &\implies\ M_{\sigma_{1/r}}(A_1^r,\dots,A_n^r)\ge I \\
&\implies\ M_{\sigma_{1/r}}(A_1^r,\dots,A_n^r)\ge M_\sigma(A_1,\dots,A_n),
\end{align*}
and the same implications with $\le$ in place of $\ge$. In fact, the first implication is
obvious from Theorem \ref{T-4.1}, and the second is seen by replacing $r$ with $1/r$ and
$A_j$ with $A_j^r$ in Theorem \ref{T-4.2}.
\end{remark}

Let $\omega$ be any probability vector. When $M=G_\omega$ and $\sigma=\#_\alpha$,
since $(G_\omega)_{\#_\alpha}=G_\omega$ for any $\alpha\in(0,1]$, Theorems \ref{T-4.1} and
\ref{T-4.2} reduce to Corollary \ref{C-3.4}. When $M=\cA_\omega$ or $\cH_\omega$ and
$\sigma=\#_\alpha$, Theorems \ref{T-4.1} and \ref{T-4.2} show the following  inequalities in
\eqref{F-4.4} and \eqref{F-4.5}, respectively. The second inequality in \eqref{F-4.4} when
$0<\alpha\le1$ and the first inequality in \eqref{F-4.4} when $-1\le\alpha<0$ were first
shown in \cite{LY}.

\begin{cor}\label{C-4.3}
Let $\alpha\in[-1,1]\setminus\{0\}$. For any $r\ge1$,
\begin{align}
&\lambda_{\min}^{r-1}(P_{w,\alpha}(A_1,\dots,A_n))P_{w,\alpha}(A_1,\dots,A_n) \nonumber\\
&\qquad\le P_{w,\alpha/r}(A_1^r,\dots,A_n^r)
\le\|P_{w,\alpha}(A_1,\dots,A_n)\|_\infty^{r-1}P_{w,\alpha}(A_1,\dots,A_n), \label{F-4.4}
\end{align}
and for any $r\in(0,1]$,
\begin{align}
&\|P_{w,\alpha r}(A_1,\dots,A_n)\|_\infty^{r-1}P_{w,\alpha r}(A_1,\dots,A_n) \nonumber\\
&\qquad\le P_{w,\alpha}(A_1^r,\dots,A_n^r)
\le\lambda_{\min}^{r-1}(P_{w,\alpha r}(A_1,\dots,A_n))P_{w,\alpha r}(A_1,\dots,A_n).
\label{F-4.5}
\end{align}
\end{cor}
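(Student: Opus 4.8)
The plan is to derive Corollary \ref{C-4.3} as a direct specialization of Theorems \ref{T-4.1} and \ref{T-4.2}, using the identifications of power means as deformed means recorded in Example \ref{E-2.4}\,(b). First I would treat the case $0<\alpha\le1$. Here $P_{\omega,\alpha}=(\cA_\omega)_{\#_\alpha}$, so I take $M=\cA_\omega$ and $\sigma=\#_\alpha$ in Theorem \ref{T-4.1}. The key point is to identify the deformed mean $M_{\sigma_{1/r}}$: the representing function of $\sigma_{1/r}$ is $f_{\#_\alpha}(x^{1/r})=x^{\alpha/r}$, so $\sigma_{1/r}=\#_{\alpha/r}$, and since $0<\alpha/r\le1$ for $r\ge1$, we get $M_{\sigma_{1/r}}=(\cA_\omega)_{\#_{\alpha/r}}=P_{\omega,\alpha/r}$. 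Plugging into \eqref{F-4.1} yields \eqref{F-4.4} for $0<\alpha\le1$. Similarly, for \eqref{F-4.5} with $0<\alpha\le1$, apply Theorem \ref{T-4.2}: now $\sigma_r=\#_{\alpha r}$ (representing function $x^{\alpha r}$, and $0<\alpha r\le1$ since $0<r\le1$), so $M_{\sigma_r}=P_{\omega,\alpha r}$, and \eqref{F-4.2} becomes exactly \eqref{F-4.5}.

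Next I would handle the case $-1\le\alpha<0$. Write $\alpha=-\beta$ with $0<\beta\le1$. Recall $P_{\omega,-\beta}=(\cH_\omega)_{\#_\beta}=(P_{\omega,\beta})^*$. One route is to apply Theorems \ref{T-4.1} and \ref{T-4.2} directly with $M=\cH_\omega$ and $\sigma=\#_\beta$; since $(\#_\beta)_{1/r}=\#_{\beta/r}$ and $(\#_\beta)_r=\#_{\beta r}$ as above, and $(\cH_\omega)_{\#_{\beta/r}}=P_{\omega,-\beta/r}$, $(\cH_\omega)_{\#_{\beta r}}=P_{\omega,-\beta r}$, the inequalities \eqref{F-4.1} and \eqref{F-4.2} translate directly into \eqref{F-4.4} and \eqref{F-4.5} for $\alpha=-\beta$. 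An alternative (and essentially equivalent) route is to take the already-proved $\alpha>0$ inequalities, replace $M$, $\sigma$, $A_j$ by $M^*$, $\sigma^*$, $A_j^{-1}$ and use $(M^*)_{(\sigma^*)_{1/r}}=(M_{\sigma_{1/r}})^*$ together with $(P_{\omega,\beta})^*=P_{\omega,-\beta}$ and the fact that conjugating by inverses swaps $\lambda_{\min}^{r-1}(\cdot)(\cdot)$ with $\|\cdot\|_\infty^{r-1}(\cdot)$; this is exactly the duality argument already used at the start of the proof of Theorem \ref{T-4.1}, so it can be invoked verbatim. I would present the direct route for brevity and note the duality as an aside.

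The only genuine obstacle is bookkeeping: one must be careful that the parameter stays in the admissible range $[-1,1]\setminus\{0\}$ at every step, i.e.\ that $\alpha/r\in(0,1]$ when $r\ge1$ and $\alpha\in(0,1]$, and $\alpha r\in(0,1]$ when $0<r\le1$ — and the corresponding signed statements for $\alpha<0$ — so that the symbols $P_{\omega,\alpha/r}$, $P_{\omega,\alpha r}$ actually denote power means covered by Example \ref{E-2.4}\,(b). All of this is routine. I would also remark, as the paper already does, that the outer (second in \eqref{F-4.4}, first in \eqref{F-4.5}) inequalities specialize the inequalities of \cite{LY}, but no further work is needed for that. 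Thus the corollary is immediate from the two theorems once the deformed-mean identifications $(\#_\alpha)_{1/r}=\#_{\alpha/r}$ and $(\#_\alpha)_r=\#_{\alpha r}$ are noted, and I expect the entire proof to be only a few lines.
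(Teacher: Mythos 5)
Your proposal is correct and follows exactly the paper's route: the corollary is obtained by specializing Theorems \ref{T-4.1} and \ref{T-4.2} to $M=\cA_\omega$ (for $\alpha>0$) or $M=\cH_\omega$ (for $\alpha<0$) with $\sigma=\#_{|\alpha|}$, using $(\#_\alpha)_{1/r}=\#_{\alpha/r}$, $(\#_\alpha)_r=\#_{\alpha r}$ and the identifications $P_{\omega,\alpha}=(\cA_\omega)_{\#_\alpha}$, $P_{\omega,-\alpha}=(\cH_\omega)_{\#_\alpha}$ from Example \ref{E-2.4}\,(b). The parameter bookkeeping you flag is indeed all that needs checking, and it goes through as you describe.
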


In the rest of the section we apply Theorems \ref{T-4.1} and \ref{T-4.2} to $2$-variable
operator means (in the sense of Kubo-Ando). Even in this specialized situation new
inequalities of Ando-Hiai type show up. Let $\tau$ and $\sigma$ be operator means with
$\sigma\ne\frak{l}$. We have the deformed operator mean $\tau_\sigma$ (see Theorem
\ref{T-2.7} and Example \ref{E-2.8}). Specializing Theorems \ref{T-4.1} and \ref{T-4.2} to
$\tau$ and $\sigma$ we have the following:

\begin{cor}\label{C-4.4}
$(${\rm 1}$)$\enspace
For every $A,B\in\bP$,
\begin{align}
\lambda_{\min}^{r-1}(A\tau_\sigma B)(A\tau_\sigma B)
&\le A^r\tau_{\sigma_{1/r}}B^r\le\|A\tau_\sigma B\|_\infty^{r-1}(A\tau_\sigma B),
\qquad r\ge1, \label{F-4.6}\\
\|A\tau_{\sigma_r}B\|_\infty^{r-1}(A\tau_{\sigma_r}B)
&\le A^r\tau_\sigma B^r\le\lambda_{\min}^{r-1}(A\tau_{\sigma_r}B)(A\tau_{\sigma_r}B),
\qquad0<r\le1, \label{F-4.7}
\end{align}
where $\sigma_r$ for $0<r\le1$ is the operator mean whose representing function is
$f_\sigma(x^r)$.

$(${\rm 2}$)$\enspace
The first inequality of \eqref{F-4.6} and the second one of \eqref{F-4.7} hold for every
$A,B\in B(\cH)^+$. When $\cH$ is finite-dimensional, all the inequalities above extend to
$A,B\in B(\cH)^+$.
\end{cor}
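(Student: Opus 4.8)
The plan is to obtain Corollary \ref{C-4.4} directly by specializing Theorems \ref{T-4.1} and \ref{T-4.2} to the case $n=2$ and $M=\tau$, a $2$-variable operator mean (which of course satisfies (I)--(IV) on $\bP^2$, noting that congruence invariance is automatic for Kubo--Ando means). Under this specialization the deformed mean $M_\sigma$ becomes the deformed operator mean $\tau_\sigma$ of Theorem \ref{T-2.7}, and $M_{\sigma_{1/r}}$, $M_{\sigma_r}$ become $\tau_{\sigma_{1/r}}$, $\tau_{\sigma_r}$ respectively. Thus \eqref{F-4.1} with $n=2$, $M=\tau$ is exactly \eqref{F-4.6}, and \eqref{F-4.2} with $n=2$, $M=\tau$ is exactly \eqref{F-4.7}. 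This settles part (1) with essentially no extra work.

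For part (2), the issue is that Theorems \ref{T-4.1} and \ref{T-4.2} are stated for $A_j\in\bP$ (positive invertible operators), whereas $\tau_\sigma$ and the other deformed means extend to all of $B(\cH)^+$ via the standard limiting procedure $A\tau_\sigma B=\lim_{\eps\searrow0}(A+\eps I)\tau_\sigma(B+\eps I)$ in SOT. The plan is to pass to the limit $\eps\searrow0$ in the inequalities of part (1) applied to $A+\eps I,B+\eps I\in\bP$. The key observation is which direction survives this limit: if $A_\eps:=A+\eps I\searrow A$ and $B_\eps:=B+\eps I\searrow B$, then $A_\eps\tau_\sigma B_\eps\searrow A\tau_\sigma B$ and likewise for the other deformed means, and $\|A_\eps\tau_\sigma B_\eps\|_\infty\searrow\|A\tau_\sigma B\|_\infty$. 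Hence the \emph{second} inequality of \eqref{F-4.6}, namely $A_\eps^r\tau_{\sigma_{1/r}}B_\eps^r\le\|A_\eps\tau_\sigma B_\eps\|_\infty^{r-1}(A_\eps\tau_\sigma B_\eps)$, passes to the limit to give $A^r\tau_{\sigma_{1/r}}B^r\le\|A\tau_\sigma B\|_\infty^{r-1}(A\tau_\sigma B)$ for $A,B\in B(\cH)^+$; here one also uses $A_\eps^r\searrow A^r$ and SOT-lower semicontinuity is not even needed since we have monotone convergence on both sides. Wait --- we must be careful: the left side $A_\eps^r\tau_{\sigma_{1/r}}B_\eps^r$ decreases to $A^r\tau_{\sigma_{1/r}}B^r$ while the right side decreases to $\|A\tau_\sigma B\|_\infty^{r-1}(A\tau_\sigma B)$, and an inequality $\le$ between two decreasing nets is preserved in the limit, so this works. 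Symmetrically, the \emph{first} inequality of \eqref{F-4.7}, $\|A_\eps\tau_{\sigma_r}B_\eps\|_\infty^{r-1}(A_\eps\tau_{\sigma_r}B_\eps)\le A_\eps^r\tau_\sigma B_\eps^r$, passes to the limit. By contrast, the first inequality of \eqref{F-4.6} involves $\lambda_{\min}$, which is only SOT-upper semicontinuous under decreasing limits (indeed $\lambda_{\min}(A_\eps)\searrow$ but need not reach $\lambda_{\min}(A)$ when $A$ is not invertible), and the relevant terms move the wrong way; similarly for the second inequality of \eqref{F-4.7}. So only the two claimed inequalities survive in general.

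Finally, when $\cH$ is finite-dimensional all four inequalities extend to $A,B\in B(\cH)^+$: in that case $\lambda_{\min}$ is SOT-continuous (the spectrum behaves continuously for decreasing sequences of finite matrices, and moreover all the operators involved converge in norm), so passing to the limit $\eps\searrow0$ in every inequality of part (1) causes no loss. I expect the only mild subtlety --- not really an obstacle --- is to record precisely why $A_\eps\tau_{\sigma_{1/r}}B_\eps$ and $A_\eps\tau_{\sigma_r}B_\eps$ converge (in SOT, and in norm in finite dimensions) to their counterparts: this follows from the definition of the extension of a Kubo--Ando mean together with the monotonicity of $\tau_{\sigma_{1/r}}$ and $\tau_{\sigma_r}$ in their arguments, exactly as for $\tau_\sigma$ itself. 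Once this is noted, the proof is a short monotone-limit argument and can be written in a few lines.
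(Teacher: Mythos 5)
Part (1) of your proposal is exactly the paper's proof: the corollary is just Theorems \ref{T-4.1} and \ref{T-4.2} specialized to $n=2$ and $M=\tau$ (with the observation, made in Section 2, that a Kubo--Ando mean restricted to $\bP\times\bP$ satisfies (I)--(IV), congruence invariance and upward continuity being automatic there). Part (2), however, contains a genuine error: you have the limiting behaviour of $\lambda_{\min}$ and of $\|\cdot\|_\infty$ exactly reversed, and consequently you argue for the wrong pair of inequalities. The inequalities asserted in (2) for all $A,B\in B(\cH)^+$ are the \emph{first} of \eqref{F-4.6} and the \emph{second} of \eqref{F-4.7}, i.e.\ the ones involving $\lambda_{\min}$. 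These do survive the limit $\eps\searrow0$: if $X_\eps:=(A+\eps I)\tau_\sigma(B+\eps I)\searrow X:=A\tau_\sigma B$ in SOT, then, since $\lambda_{\min}(Y)=\inf_{\|\xi\|=1}\langle Y\xi,\xi\rangle$ for $Y\ge0$ and infima commute, one gets $\lambda_{\min}(X_\eps)\searrow\lambda_{\min}(X)$; this one-line continuity fact is precisely what the paper's proof invokes. Your assertion that $\lambda_{\min}$ "need not reach $\lambda_{\min}(A)$" under decreasing limits, so that these terms "move the wrong way," is false.

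The quantity that genuinely misbehaves in infinite dimensions is the operator norm: $\|X_\eps\|_\infty$ decreases, but its limit $c:=\lim_{\eps\searrow0}\|X_\eps\|_\infty$ can be strictly larger than $\|X\|_\infty$ (a supremum of quadratic forms does not commute with the infimum over $\eps$); the paper records exactly this failure for $\sigma=\#$ in the remark immediately after the corollary. Hence your limiting argument for the second inequality of \eqref{F-4.6} (and the first of \eqref{F-4.7}) breaks at the step where you claim the right-hand side decreases to $\|A\tau_\sigma B\|_\infty^{r-1}(A\tau_\sigma B)$: it decreases only to $c^{\,r-1}(A\tau_\sigma B)$, which, since $c\ge\|A\tau_\sigma B\|_\infty$ may be strict and $r-1\ge0$, yields a strictly weaker conclusion. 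Those two norm inequalities are exactly the ones the corollary does not claim for general $\cH$ and which the paper states it does not know how to extend. Your final finite-dimensional assertion is correct, since there $X_\eps\to X$ in norm, so both $\lambda_{\min}$ and $\|\cdot\|_\infty$ converge and all four inequalities extend to $A,B\in B(\cH)^+$.
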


\begin{proof}
(1) is just the special case of Theorems \ref{T-4.1} and \ref{T-4.2}. The first assertion
of (2) follows by taking the limit of the inequalities in question for $A+\eps I$ and
$B+\eps I$ as $\eps\searrow0$, since
$\lambda_{\min}((A+\eps I)\tau_\sigma(B+\eps I))\searrow\lambda_{\min}(A\tau_\sigma B)$.
When $\cH$ is finite-dimensional, we have
$\|(A+\eps I)\tau_\sigma(B+\eps I)\|_\infty\searrow\|A\tau_\sigma B\|_\infty$ as well, so
the latter assertion follows.
\end{proof}

In connection with the above proof we note that
$$
\lim_{\eps\searrow0}\|(A+\eps I)\#(B+\eps I)\|_\infty=\|A\# B\|_\infty
$$
fails to hold for $A,B\in B(\cH)^+$ in the infinite-dimensional case. So it does not seem easy to
extend the second inequality of \eqref{F-4.6} and the first one of \eqref{F-4.7} to
$A,B\in B(\cH)^+$ in the infinite-dimensional case.

For the right trivial mean $\frak{r}$ we have $\tau_{\frak{r}}=\tau$ and $\frak{r}_r=\#_r$ for
every $r\in(0,1]$. Moreover, when $\sigma=\#_r$, equation \eqref{F-2.6} is
$$
(1/x)^rf_\tau\biggl({(t/x)^r\over(1/x)^r}\biggr)=1,\quad\mbox{i.e.,}\quad
f_\tau(t^r)=x^r,
$$
whose solution is $x=f_\tau(t^r)^{1/r}$. We write $\tau_{[r]}$ for the operator mean whose
representing function is $f_\tau(t^r)^{1/r}$, where $0<r\le1$. Then Corollary \ref{C-4.4}
specialized to the case $\sigma=\frak{r}$ is the following:

\begin{cor}\label{C-4.5}
$(${\rm 1}$)$\enspace
For every operator mean $\tau$ and every $A,B\in\bP$,
\begin{align}
\lambda_{\min}^{r-1}(A\tau B)(A\tau B)&\le A^r\tau_{[1/r]}B^r
\le\|A\tau B\|_\infty^{r-1}(A\tau B),\qquad r\ge1, \label{F-4.8}\\
\|A\tau_{[r]}B\|_\infty^{r-1}(A\tau_{[r]}B)&\le A^r\tau B^r
\le\lambda_{\min}^{r-1}(A\tau_{[r]}B)(A\tau_{[r]}B), \qquad 0<r\le1. \label{F-4.9}
\end{align}

$(${\rm 2}$)$\enspace
The first inequality in \eqref{F-4.8} and the second one in \eqref{F-4.9} hold for every
$A,B\in B(\cH)^+$. When $\cH$ is finite-dimensional, all the inequalities above extend to
$A,B\in B(\cH)^+$.
\end{cor}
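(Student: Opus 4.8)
The plan is to derive Corollary \ref{C-4.5} directly as the special case $\sigma=\frak{r}$ of Corollary \ref{C-4.4}, so the only real work is identifying the deformed means $\tau_{\sigma_{1/r}}$ and $\tau_{\sigma_r}$ when $\sigma=\frak{r}$. First I would recall that $\tau_{\frak r}=\tau$ (already noted in the paragraph after Theorem \ref{T-2.7}), so the outer deformation by $\sigma$ disappears and $A\tau_\sigma B$ in \eqref{F-4.6}--\eqref{F-4.7} becomes simply $A\tau B$. Next, for $0<r\le 1$ the mean $\frak r_r$ has representing function $f_{\frak r}(x^r)=x^r$ (since $f_{\frak r}(x)=x$), i.e. $\frak r_r=\#_r$; and $\frak r_{1/r}=\#_{1/r}$ only makes sense as the weighted geometric "mean'' with exponent $1/r\ge 1$, which is why one must instead track the composite deformation $\tau_{\sigma_{1/r}}$ with $\sigma=\frak r$ rather than $\sigma_{1/r}$ alone. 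The cleanest route is to compute $\tau_{\#_s}$ for $0<s\le 1$ using Theorem \ref{T-2.7}: plugging $f_\sigma=x^s$ into \eqref{F-2.6} gives $(1/x)^s f_\tau\bigl((t/x)^s/(1/x)^s\bigr)=1$, i.e. $x^{-s}f_\tau(t^s)=1$, so $x=f_\tau(t^s)^{1/s}$; thus $\tau_{\#_s}=\tau_{[s]}$ in the notation just introduced before the corollary. This is exactly the content of the displayed computation preceding Corollary \ref{C-4.5}, so I would simply invoke it.

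With these identifications in hand the proof of part (1) is a substitution. In Corollary \ref{C-4.4}(1) take $\sigma=\frak r$. The deformed mean $\tau_{\frak r}$ equals $\tau$, and $(\frak r)_{1/r}$ should be read as the instruction "deform $\tau$ by $\frak r$ and then replace the representing function's variable $x$ by $x^{1/r}$''; since $\tau_{\frak r}=\tau$, the mean $\tau_{\sigma_{1/r}}$ appearing in \eqref{F-4.6} is the operator mean with representing function $f_\tau(x^{1/r})$... but for $r\ge 1$ this is $f_\tau(x^{1/r})$ with $1/r\le 1$, which is exactly $\tau_{[1/r]}$ after the reparametrisation $x\mapsto x^{1/r}$ built into the definition $f_{\tau_{[s]}}(t)=f_\tau(t^s)^{1/s}$ — here one must be a little careful: the cleanest statement is that $\tau_{\sigma_{1/r}}$ with $\sigma=\frak r$ has representing function $f_\tau(t^{1/r})$, and for $r\ge1$ we set $s=1/r\le1$ and note this is $f_{\tau_{[s]}}$ up to the scaling convention; I would simply record that with $\sigma=\frak r$ Theorem \ref{T-4.1} yields precisely \eqref{F-4.8} with $\tau_{[1/r]}$, and Theorem \ref{T-4.2} yields \eqref{F-4.9} with $\tau_{[r]}$, citing the computation $\tau_{\#_s}=\tau_{[s]}$ above. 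Concretely: \eqref{F-4.6} with $\sigma=\frak r$ reads $\lambda_{\min}^{r-1}(A\tau B)(A\tau B)\le A^r\tau_{[1/r]}B^r\le\|A\tau B\|_\infty^{r-1}(A\tau B)$ for $r\ge1$, which is \eqref{F-4.8}; and \eqref{F-4.7} with $\sigma=\frak r$ reads $\|A\tau_{[r]}B\|_\infty^{r-1}(A\tau_{[r]}B)\le A^r\tau B^r\le\lambda_{\min}^{r-1}(A\tau_{[r]}B)(A\tau_{[r]}B)$ for $0<r\le1$, which is \eqref{F-4.9}.

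For part (2), I would argue exactly as in the proof of Corollary \ref{C-4.4}(2). Replace $A,B$ by $A+\eps I,B+\eps I$ in the inequalities of (1), and let $\eps\searrow 0$. All the operator means involved ($\tau$, $\tau_{[r]}$, $\tau_{[1/r]}$) are downward continuous as Kubo--Ando means, so each side converges in SOT to the corresponding expression for $A,B\in B(\cH)^+$ via the standard extension $A\tau B=\lim_{\eps\searrow0}(A+\eps I)\tau(B+\eps I)$; moreover $\lambda_{\min}((A+\eps I)\tau B')\searrow\lambda_{\min}(A\tau B')$ since $\lambda_{\min}$ is an infimum of a decreasing net of spectra. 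This gives the first inequality of \eqref{F-4.8} and the second of \eqref{F-4.9} for all $A,B\in B(\cH)^+$. When $\cH$ is finite-dimensional, additionally $\|(A+\eps I)\tau B'\|_\infty\searrow\|A\tau B'\|_\infty$ (the norm is continuous on the finite-dimensional cone), so the remaining two inequalities extend as well.

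The only genuine obstacle is bookkeeping with the notation $\sigma_{1/r}$ versus $\sigma_r$ versus $\tau_{[r]}$, and in particular making sure the reparametrisation conventions match up so that $\tau_{(\frak r)_{1/r}}=\tau_{[1/r]}$ for $r\ge1$ and $\tau_{(\frak r)_r}=\tau_{[r]}$ for $0<r\le1$; the substantive analytic inputs (Theorems \ref{T-4.1}, \ref{T-4.2}, \ref{T-2.7}) are already available, so I expect this step to be purely formal. No separate convergence argument beyond the one copied from Corollary \ref{C-4.4}(2) is needed.
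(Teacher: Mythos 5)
Your proposal is correct and is essentially the paper's own route: the paper obtains Corollary \ref{C-4.5} precisely by specializing Corollary \ref{C-4.4} to $\sigma=\frak{r}$, using the identification $\tau_{\#_s}=\tau_{[s]}$ computed from \eqref{F-2.6} just before the corollary (so $\tau_{(\frak{r})_{1/r}}=\tau_{[1/r]}$ for $r\ge1$ and $\tau_{(\frak{r})_r}=\tau_{[r]}$ for $0<r\le1$), and part (2) is the same $\eps\searrow0$ limit argument as in Corollary \ref{C-4.4}\,(2). One minor slip that does not affect the conclusion: the representing function of $\tau_{(\frak{r})_{1/r}}=\tau_{\#_{1/r}}$ is $f_\tau(t^{1/r})^{r}$, not $f_\tau(t^{1/r})$, but since you ultimately invoke the identity $\tau_{\#_s}=\tau_{[s]}$ the specialization comes out right.
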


For example, when $\tau=\#_\alpha$ and hence $\tau_{[r]}=\#_\alpha$ for any $r\in(0,1]$,
\eqref{F-4.8} reduces to the original Ando-Hiai inequality in \eqref{F-1.1} and \eqref{F-4.9}
is its complementary version in \cite{Se}. When $\tau$ is p.m.i.\ and hence $\tau_{[r]}\le\tau$
for any $r\in(0,1]$, \eqref{F-4.8} gives a generalized Ando-Hiai's inequality in \cite{Wa},
and \eqref{F-4.9} gives its complementary version generalizing that in \cite{Se}.

The following is an even more generalized version of the above mentioned inequality in
\cite{Wa}.

\begin{cor}\label{C-4.6}
For each $r\ge1$ and operator means $\sigma,\tau$ the following conditions are equivalent:
\begin{itemize}
\item[\rm(i)] $A,B\in\bP$, $A\tau B\ge I$ $\implies$ $A^r\sigma B^r\ge I$;
\item[\rm(ii)] $f_\sigma(t^r)\ge f_\tau(t)^r$ for all $t>0$.
\end{itemize}
\end{cor}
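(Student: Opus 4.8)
The plan is to prove (i) $\Leftrightarrow$ (ii) by treating the two implications separately, using the two-sided estimate \eqref{F-4.8} of Corollary \ref{C-4.5}(1) as the main tool for the direction (ii) $\Rightarrow$ (i) and a direct scalar computation for (i) $\Rightarrow$ (ii).

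First I would recast condition (ii) as a plain Kubo-Ando inequality. Since $r\ge1$ we have $1/r\in(0,1]$, so the operator mean $\tau_{[1/r]}$ (with representing function $f_\tau(t^{1/r})^r$) appearing in \eqref{F-4.8} is defined. Substituting $t\mapsto t^{1/r}$ in the inequality $f_\sigma(t^r)\ge f_\tau(t)^r$ shows that (ii) is equivalent to $f_\sigma(t)\ge f_\tau(t^{1/r})^r=f_{\tau_{[1/r]}}(t)$ for all $t>0$; by the order-isomorphic correspondence of \cite{KA} this is in turn equivalent to $A\sigma B\ge A\tau_{[1/r]}B$ for all $A,B\in\bP$.

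For (ii) $\Rightarrow$ (i): assuming $\sigma\ge\tau_{[1/r]}$, let $A,B\in\bP$ with $A\tau B\ge I$. The first inequality in \eqref{F-4.8} gives $A^r\tau_{[1/r]}B^r\ge\lambda_{\min}^{r-1}(A\tau B)(A\tau B)$; since $A\tau B\ge I$ forces $\lambda_{\min}(A\tau B)\ge1$ and $r-1\ge0$, the scalar prefactor is $\ge1$, hence $A^r\tau_{[1/r]}B^r\ge A\tau B\ge I$, and then $A^r\sigma B^r\ge A^r\tau_{[1/r]}B^r\ge I$. For (i) $\Rightarrow$ (ii): fix $t>0$, put $c:=f_\tau(t)^{-1}>0$ and apply (i) to $A:=cI$, $B:=ctI$; by homogeneity and $I\tau(tI)=f_\tau(t)I$ one has $A\tau B=I$, so $I\le A^r\sigma B^r=c^r\bigl(I\sigma(t^rI)\bigr)=c^rf_\sigma(t^r)I$, i.e.\ $f_\sigma(t^r)\ge c^{-r}=f_\tau(t)^r$; as $t>0$ was arbitrary, (ii) holds.

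The only point requiring a little care is the first step: one has to recognise that the auxiliary mean $\tau_{[1/r]}$, which was built precisely so that \eqref{F-4.8} holds, has exactly the representing function that converts (ii) into $\sigma\ge\tau_{[1/r]}$; after that the $\lambda_{\min}^{r-1}\ge1$ bookkeeping and the normalisation $A,B\propto I,tI$ with $A\tau B=I$ are routine. (An alternative proof of (ii) $\Rightarrow$ (i) directly from Theorem \ref{T-2.1}(3) and Hansen's inequality \cite{Ha}, in the style of the proof of Theorem \ref{T-4.1}, is possible but less economical.) I would close by noting that Corollary \ref{C-4.6} recovers the generalized Ando-Hiai inequality of \cite{Wa} on taking $\sigma=\tau$ p.m.i., and \eqref{F-1.1} on taking $\sigma=\tau=\#_\alpha$.
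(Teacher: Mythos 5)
Your proposal is correct and follows essentially the same route as the paper's proof: the direction (i)\,$\implies$\,(ii) is the same scalar substitution with $x=f_\tau(t)$ (your $A=cI$, $B=ctI$), and (ii)\,$\implies$\,(i) is the same reduction of (ii) to $f_{\tau_{[1/r]}}\le f_\sigma$ followed by the first inequality of \eqref{F-4.8} from Corollary \ref{C-4.5}\,(1). The only difference is that you spell out the $\lambda_{\min}^{r-1}(A\tau B)\ge1$ bookkeeping that the paper leaves implicit.
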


\begin{proof}
(i)\,$\implies$\,(ii).\enspace
For every $t>0$ let $x:=f_\tau(t)$. Then, since $(1/x)\tau(t/x)=1$, (i) implies that
$(1/x^r)\sigma(t^r/x^r)\ge1$ and hence $1\sigma t^r\ge x^r$, i.e.,
$f_\sigma(t^r)\ge f_\tau(t)^r$.

(ii)\,$\implies$\,(i).\enspace
Assume (ii), which means that $f_\tau(t^{1/r})^r\le f_\sigma(t)$ for all $t>0$, i.e.,
$f_{\tau_{[1/r]}}\le f_\sigma$. Hence, if $A\tau B\ge I$, then by Corollary \ref{C-4.5}\,(1)
we have $I\le A^r\tau_{[1/r]}B^r\le A^r\sigma B^r$
\end{proof}

\section{Reverse inequalities}

In this section we show some reverse inequalities of Ando-Hiai type involving the generalized
Kantorovich constant. For each $h>1$, the \emph{generalized Kantorovich constant} $K(h,p)$ is
defined by
\begin{equation}\label{F-5.1}
K(h,p):={h^p-h\over(p-1)(h-1)}\biggl({p-1\over p}\cdot{h^p-1\over h^p-h}\biggr)^p,
\qquad p\in\bR,
\end{equation}
where $K(h,1)=\lim_{p\to1}K(h,p)=1$. It is known \cite[Lemma 2.4]{FSY} and \cite[Theorem 2.54]{FMPS} that if $p>1$, then $K(h^t,p)^{1/t}$
is increasing in $t>0$ and $1\le K(h^t,p)^{1/t}\le h^{p-1}$ for all $t>0$. Moreover, the
following can easily be verified by applying L'Hospital's rule:
\begin{align}\label{F-5.2}
\lim_{t\to0}K(h^t,p)^{1/t}=1.
\end{align}

\begin{lemma}\label{L-5.1}
Let $A,C\in\bP$ and assume that $mI\le A\le MI$ and $\mu I\le C^2\le I$ for some scalars
$0<m<M$ and $\mu>0$. Then for every $r>1$,
$$
CA^rC\le K(h_1,r)(CAC)^r,
$$
where $h_1:=M/m\mu$ and $K(h,r)$ is defined by \eqref{F-5.1}.
\end{lemma}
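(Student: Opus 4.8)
The plan is to reduce the operator inequality $CA^rC\le K(h_1,r)(CAC)^r$ to a scalar inequality via the standard Kantorovich-type device: find a constant $K$ such that $x^r\le K\,y$ whenever $y$ is constrained to lie between the two endpoints of the chord/tangent comparison for $x^r$ on the relevant spectral interval, then convert this into an operator statement using operator monotonicity of appropriate functions. Concretely, first I would locate the spectra. Since $mI\le A\le MI$ and $\mu I\le C^2\le I$, we have $CAC\le A^{1/2}C^2A^{1/2}$-type bounds giving $\lambda_{\min}(CAC)\ge m\mu$ (using $C^2\ge\mu I$ and $A\ge mI$) and $\|CAC\|_\infty\le M$ (using $C^2\le I$ and $A\le MI$); hence $CAC$ has spectrum in $[m\mu,M]$, so $h_1=M/(m\mu)$ is exactly the ratio of the extremes. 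Set $T:=CAC$, so $m\mu I\le T\le MI$.

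The key step is to bound $CA^rC$ above by $K(h_1,r)\,T^r$. The route I would take: since $C\le I$ (because $C^2\le I$ and $C\in\bP$), write $CA^rC$ and use that $x\mapsto x^r$ is operator convex for $r>1$ together with $C\le I$ — but one must be careful, Hansen--Pedersen gives $CA^rC\ge(CAC)^r$, the wrong direction. Instead the correct tool is the Kantorovich complement: for the operator convex function $x^r$ on an interval $[a,b]$ with $b/a=h$, one has the reverse bound $\phi(X)^r\le K(h,r)\,\phi(X^r)$ for unital positive linear maps $\phi$, or in the sandwiched form, $C X^r C \le K(h,r)(CXC)^r$ when $C\le I$ and $\mathrm{spec}(X)\subseteq[a,b]$. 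So I would apply this with $X=A$, noting $\mathrm{spec}(A)\subseteq[m,M]$ — but the constant would then be $K(M/m,r)$, not $K(M/(m\mu),r)$. To get the sharper $h_1=M/(m\mu)$ I would instead work with $X$ having spectrum in $[m\mu,M]$: replace the pair $(A,C)$ by absorbing $\mu$, i.e. observe $CA^rC = C(A^r)C$ and use $\mu I\le C^2$ to enlarge the effective lower endpoint. The cleanest version: for $0<\mu\le 1$ and any positive $S$ with $C^2\ge\mu I$, one checks $CS C\ge \mu S$ is false in general, so the right manipulation is to apply the scalar Kantorovich inequality directly to the function $g(x)=x^r$ on $[m\mu, M]$, using that the diagonal of $C(\cdot)C$ in the spectral decomposition of $T=CAC$ lands in that interval.

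So the technical heart, and the step I expect to be the main obstacle, is the precise derivation of the Kantorovich complement inequality $CA^rC\le K(h_1,r)(CAC)^r$ with the stated $h_1$: one must track how the constant $\mu$ from $C^2\ge\mu I$ enters. I would proceed as follows. Put $D:=C^{-1}$, so $D\ge I$ and $D^2\le\mu^{-1}I$. Then $CA^rC\le K(h_1,r)(CAC)^r$ is equivalent (conjugating by $D$) to $A^r\le K(h_1,r)\,D(CAC)^rD$. Now $CAC$ has spectrum in $[m\mu,M]$, and the scalar inequality $t^r \le K(h_1,r)\,t \cdot (\text{something})$... — rather, the efficient path is the known operator Kantorovich inequality: if $m'I\le X\le M'I$ with $h=M'/m'$, then for $r>1$, $X^r\le K(h,r)\,\bigl(\text{the tangent line to }t^r\text{ fitted on }[m',M']\bigr)$, combined with operator monotonicity. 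I would apply this to $X=CAC\in[m\mu,M]$ to get $(CAC)^r \le K(h_1,r)^{-1}$-controlled lower bounds, then sandwich by $C$. The clean statement I will invoke is: for $\mathrm{spec}(X)\subseteq[a,b]$, $h=b/a$, $r>1$, one has $X^r\ge K(h,r)^{-1}\,\alpha^{r-1}X$ where... no — I will instead cite the form $K(h,r)\,X^r\ge \|X\|_\infty^{r-1}X$ analogues and dualize. Given the length constraints I would state the needed scalar lemma (the reverse Young/Kantorovich bound $t^r\le K(h,r)\,m^{r-1}t$-type estimate, valid on $[m,M]$ with $h=M/m$, in its sharp form) and then push it through $C(\cdot)C$ carefully, the bookkeeping of $\mu$ being exactly where $h_1=M/(m\mu)$ rather than $M/m$ emerges because the operator $CAC$, not $A$, is the one whose spectral interval governs the constant, and that interval has been shrunk at the bottom by the factor $\mu$.
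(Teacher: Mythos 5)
Your proposal does not reach a proof: the step you yourself flag as ``the main obstacle'' --- extracting the constant $K(h_1,r)$ with $h_1=M/(m\mu)$ from the hypothesis $\mu I\le C^2\le I$ --- is left unresolved, and the tools you gesture at cannot fill the hole. The ``sandwiched Kantorovich complement'' $CX^rC\le K(h,r)(CXC)^r$ for $C\le I$ that you propose to cite is not an available theorem; it is essentially the statement being proved, and without a lower bound on $C^2$ it is false (take $X=I$ and $C=\varepsilon^{1/2}I$: the left side is $\varepsilon$, the right side is of order $\varepsilon^r$). The genuine Kantorovich/Mond--Pe\v{c}ari\'c inequality $\Phi(A^r)\le K(M/m,r)\Phi(A)^r$ requires $\Phi$ unital, and $X\mapsto CXC$ is not unital; bridging exactly that gap is the content of the lemma, so invoking it is circular. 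The alternative routes you sketch (conjugating by $D=C^{-1}$, ``absorbing $\mu$'', a tangent/chord estimate whose statement you abandon mid-sentence) are never carried out, so there is no chain of operator inequalities one could verify.

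For comparison, the paper's argument is two lines: by convexity of $t^r$ and $\mathrm{spec}(A)\subseteq[m,M]\subseteq[m\mu,M]$ one has $A^r\le\alpha A+\beta I$, where $\alpha t+\beta$ is the chord of $t^r$ over $[m\mu,M]$; conjugation gives $CA^rC\le\alpha\,CAC+\beta C^2$, the $C^2$ term is then removed using the bounds on $C^2$, and finally the variational identity $K(h_1,r)=\max_{m\mu\le t\le M}(\alpha t+\beta)/t^r$ is applied to $CAC$, whose spectrum lies in $[m\mu,M]$. Your instinct that the bookkeeping of $\mu$ is the delicate point is in fact well founded: the chord intercept $\beta$ is negative for $r>1$, so the passage from $\beta C^2$ to a multiple of $I$ (the paper replaces $\beta C^2$ by $\beta I$ via $C^2\le I$) is precisely where the sign of $\beta$ and the two-sided bound on $C^2$ must be confronted --- using the correct bound $\beta C^2\le\beta\mu I$ changes the resulting constant, so this step deserves explicit attention in any complete write-up. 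But identifying where the difficulty sits is not the same as resolving it; as it stands, your proposal is an outline with the essential estimate missing.
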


\begin{proof}
Since $r>1$, note that $t^r$ is convex on $t>0$ and so $t^r\le\alpha t+\beta$ for all
$t\in[m\mu,M]$, where
$$
\alpha:={M^r-(m\mu)^r\over M-m\mu}\qquad\mbox{and}\qquad
\beta:={M(m\mu)^r-m\mu M^r\over M-m\mu}.
$$
Since $m\mu I\le CAC\le MI$ and $\mu I\le C^2\le I$, we have
\begin{align*}
CA^rC&\le C(\alpha A+\beta I)C=\alpha CAC+\beta C^2 \\
&\le\alpha CAC+\beta I
\le\biggl(\max_{m\mu\le t\le M}{\alpha t+\beta\over t^r}\biggr)(CAC)^r \\
&=K(h_1,r)(CAC)^r.
\end{align*}
\end{proof}

Let $\cK$ be a Hilbert space and $\Phi:B(\cK)\to B(\cH)$ be a unital positive linear map.
Then it is known \cite[Theorem 3.18]{FMPS} that if $A\in\bP(\cK)$ with $mI\le A\le MI$ for
some $0<m<M$, then
$$
\Phi(A^r)\le K(M/m,r)\Phi(A)^r.
$$
Apply this to the linear map $\Phi:B(\oplus_1^n\cH)\to B(\cH)$,
$\Phi\bigl([A_{ij}]_{i,j=1}^n\bigr):=\sum_{j=1}^nw_jA_{jj}$, where $(w_1,\dots,w_n)$ is a
probability vector and $[A_{ij}]_{i,j=1}^n\in B(\oplus_1^n\cH)$ represented as an $n\times n$
matrix form with $A_{ij}\in B(\cH)$. Then we see that if $r>1$ and $A_1,\dots,A_n\in\bP$ with
$mI\le A_j\le MI$ for all $j$, then
\begin{align}\label{F-5.3}
\sum_{j=1}^nw_jA_j^r\le K(M/m,r)\Biggl(\sum_{j=1}^rw_jA_j\Biggr)^r.
\end{align}

In the rest of the section let $\omega=(w_1,\dots,w_n)$ be any probability measure.
The inequalities in the next theorem for the $n$-variable power means $P_{\omega,\alpha}$
are the reverse counterparts of \eqref{F-3.9} and \eqref{F-3.10}.

\begin{thm}\label{T-5.2}
Let $A_1,\dots,A_n\in\bP$ be such that $mI\le A_j\le MI$ for all $j$ for some scalars $0<m<M$.
Let $\kappa_0:=M/m$ and $\kappa(X):=\|X\|_\infty/\lambda_{\min}(X)$, the condition number of
$X:=P_{\omega,\alpha}(A_1,\dots,A_n)$. Then for any $\alpha\in(0,1]$ and $r\ge1$,
\begin{align}
&P_{\omega,\alpha}(A_1^r,\dots,A_n^r) \nonumber\\
&\le K(\kappa_0\kappa(X),r)K((\kappa_0\kappa(X))^\alpha,r)^{1/\alpha}
\lambda_{\min}^{r-1}(P_{\omega,\alpha}(A_1,\dots,A_n))P_{\omega,\alpha}(A_1,\dots,A_n),
\label{F-5.4}
\end{align}
and for any $\alpha\in[-1,0)$ and $r\ge1$,
\begin{align}
&P_{\omega,\alpha}(A_1^r,\dots,A_n^r) \nonumber\\
&\ge K(\kappa_0\kappa(X),r)^{-1}K((\kappa_0\kappa(X))^{-\alpha},r)^{1/\alpha}
\|P_{\omega,\alpha}(A_1,\dots,A_n)\|_\infty^{r-1}P_{\omega,\alpha}(A_1,\dots,A_n).
\label{F-5.5}
\end{align}
\end{thm}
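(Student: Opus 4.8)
The plan is to establish \eqref{F-5.4} directly and then deduce \eqref{F-5.5} from it by the adjoint device $P_{\omega,\alpha}=(P_{\omega,-\alpha})^*$. The key structural fact is that $P_{\omega,\alpha}=(\cA_\omega)_{\#_\alpha}$ for $0<\alpha\le1$ (Example~\ref{E-2.4}\,(b)), so that by Theorem~\ref{T-2.1}\,(3) it suffices, in order to bound $P_{\omega,\alpha}(A_1^r,\dots,A_n^r)$ from above by some $Y\in\bP$, to verify the one-sided fixed-point inequality $Y\ge\cA_\omega(Y\#_\alpha A_1^r,\dots,Y\#_\alpha A_n^r)$.

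For \eqref{F-5.4} fix $\alpha\in(0,1]$; since $K(h,1)=1$ the case $r=1$ is an equality, so assume $r>1$. Put $X:=P_{\omega,\alpha}(A_1,\dots,A_n)$, $\lambda:=\lambda_{\min}(X)$, $B_j:=X^{-1/2}A_jX^{-1/2}$, and $K_1:=K(\kappa_0\kappa(X),r)$, $K_2:=K((\kappa_0\kappa(X))^\alpha,r)$; by \eqref{F-2.2} the defining equation of $X$ reads $\sum_{j=1}^nw_jB_j^\alpha=I$. I would aim to show that $Y:=K_1K_2^{1/\alpha}\lambda^{r-1}X$ satisfies the above fixed-point inequality. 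Writing $Y=cX$, a direct computation gives $Y\#_\alpha A_j^r=c^{1-\alpha}(X\#_\alpha A_j^r)$, so after dividing by $c^{1-\alpha}$ and conjugating by $X^{-1/2}$ the inequality $Y\ge\cA_\omega(Y\#_\alpha A_1^r,\dots,Y\#_\alpha A_n^r)$ reduces to $c^\alpha I\ge\sum_{j=1}^nw_j(X^{-1/2}A_j^rX^{-1/2})^\alpha$.

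The two Kantorovich estimates now close the argument. Applying Lemma~\ref{L-5.1} with $A=A_j$ and $C:=\lambda^{1/2}X^{-1/2}$ — the point being that then $\kappa(X)^{-1}I\le C^2\le I$ (so in particular $C^2\le I$) and $h_1=(M/m)\kappa(X)=\kappa_0\kappa(X)$ — yields $X^{-1/2}A_j^rX^{-1/2}\le K_1\lambda^{r-1}B_j^r$, whence $(X^{-1/2}A_j^rX^{-1/2})^\alpha\le(K_1\lambda^{r-1})^\alpha B_j^{r\alpha}$ by operator monotonicity of $t\mapsto t^\alpha$. Next, $mX^{-1}\le B_j\le MX^{-1}$ gives $m\|X\|_\infty^{-1}I\le B_j\le M\lambda^{-1}I$, so the operators $B_j^\alpha$ lie between two scalars whose ratio is $(\kappa_0\kappa(X))^\alpha$, and \eqref{F-5.3} (valid since $r>1$) applied to $B_j^\alpha$ gives $\sum_{j=1}^nw_jB_j^{r\alpha}=\sum_{j=1}^nw_j(B_j^\alpha)^r\le K_2I$. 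Combining these, $\sum_{j=1}^nw_j(X^{-1/2}A_j^rX^{-1/2})^\alpha\le(K_1\lambda^{r-1})^\alpha K_2I=c^\alpha I$ with $c=K_1K_2^{1/\alpha}\lambda^{r-1}$, as required. For \eqref{F-5.5} with $-1\le\alpha<0$, set $\beta:=-\alpha\in(0,1]$ and apply \eqref{F-5.4} with $\beta$ and $A_j^{-1}$ in place of $\alpha$ and $A_j$: here $M^{-1}I\le A_j^{-1}\le m^{-1}I$ has the same ratio $\kappa_0$, the mean $P_{\omega,\beta}(A_1^{-1},\dots,A_n^{-1})=X^{-1}$ (since $P_{\omega,\alpha}=(P_{\omega,\beta})^*$) has the same condition number $\kappa(X)$, and $\lambda_{\min}(X^{-1})=\|X\|_\infty^{-1}$; taking inverses in the resulting inequality and using $P_{\omega,\beta}(A_1^{-r},\dots,A_n^{-r})^{-1}=(P_{\omega,\beta})^*(A_1^r,\dots,A_n^r)=P_{\omega,\alpha}(A_1^r,\dots,A_n^r)$ together with $(\kappa_0\kappa(X))^\beta=(\kappa_0\kappa(X))^{-\alpha}$ and $-1/\beta=1/\alpha$ reproduces \eqref{F-5.5} exactly.

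The calculations are routine, so the only delicate points are, first, the normalization choice $C=\lambda^{1/2}X^{-1/2}$ in Lemma~\ref{L-5.1} (rather than, say, $\|X\|_\infty^{-1/2}X^{-1/2}$): this particular scaling is precisely what simultaneously forces $C^2\le I$ and makes the condition-number factor $\kappa_0\kappa(X)$ — not $M/m$ alone — appear; and, second, the parallel bookkeeping of the two constants $K_1,K_2$, of the exponent $1/\alpha$ carried by $K_2$, and of the sign reversals in the adjoint step, where it is easy to misplace a $1/\alpha$ or a $\kappa(X)$. Beyond this I foresee no conceptual obstacle.
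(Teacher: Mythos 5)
Your proposal is correct and follows essentially the same route as the paper: the fixed-point test of Theorem \ref{T-2.1}\,(3) with the candidate $K_1K_2^{1/\alpha}\lambda^{r-1}X$, Lemma \ref{L-5.1} applied with $C=\lambda^{1/2}X^{-1/2}$, the Kantorovich bound \eqref{F-5.3} applied to the operators $(X^{-1/2}A_jX^{-1/2})^\alpha$, and the adjoint substitution $A_j\mapsto A_j^{-1}$, $\alpha\mapsto-\alpha$ for \eqref{F-5.5}. The only (harmless) difference is presentational: you normalize the fixed-point inequality to the scalar form $c^\alpha I\ge\sum_jw_j(X^{-1/2}A_j^rX^{-1/2})^\alpha$ at the outset and treat $r=1$ separately, while the paper assembles the constants at the end.
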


\begin{proof}
Recall that $(P_{\omega,\alpha})^*=P_{\omega,-\alpha}$, $M^{-1}I\le A_j^{-1}\le m^{-1}I$ and
$\kappa(X^{-1})=\kappa(X)$. Hence \eqref{F-5.5} follows from \eqref{F-5.4} by replacing $A_j$
with $A_j^{-1}$ and $\alpha$ with $-\alpha$. So we may prove \eqref{F-5.4} only. Let
$0<\alpha\le1$, $r\ge1$, $X:=P_{\omega,\alpha}(A_1,\dots,A_n)$, and
$\lambda:=\lambda_{\min}(X)$. Since $X=\sum_{j=1}^nw_j(X\#_\alpha A_j)$, we have
\begin{align}\label{F-5.6}
I=\sum_{j=1}^nw_j(X^{-1/2}A_jX^{-1/2})^\alpha.
\end{align}
Since $\kappa(X)^{-1}I=\lambda\|X\|_\infty^{-1}I\le\lambda X^{-1}\le I$, it follows from
Lemma \ref{L-5.1} that
$$
(\lambda^{1/2}X^{-1/2})A_j^r(\lambda^{1/2}X^{-1/2})
\le K(\kappa_0\kappa(X),r)\bigl[(\lambda^{1/2}X^{-1/2})A_j(\lambda^{1/2}X^{-1/2})\bigr]^r.
$$
Hence letting $K_1:=K(\kappa_0\kappa(X),r)$ one has
\begin{align}\label{F-5.7}
\lambda^{1-r}K_1^{-1}(X^{-1/2}A_j^rX^{-1/2})\le(X^{-1/2}A_jX^{-1/2})^r
\end{align}
so that
$$
\sum_{j=1}^nw_j\bigl[\lambda^{1-r}K_1^{-1}(X^{-1/2}A_j^rX^{-1/2})\bigr]^\alpha
\le\sum_{j=1}^nw_j(X^{-1/2}A_jX^{-1/2})^{\alpha r}.
$$
Furthermore, since $(m/\|X\|_\infty)I\le X^{-1/2}A_jX^{-1/2}\le(M/\lambda)I$, one has
$$
(m/\|X\|_\infty)^\alpha I\le(X^{-1/2}A_jX^{-1/2})^\alpha\le(M/\lambda)^\alpha I.
$$
Hence by \eqref{F-5.3} and \eqref{F-5.6},
\begin{align*}
&\sum_{j=1}^nw_j\bigl[\lambda^{1-r}K_1^{-1}(X^{-1/2}A_j^rX^{-1/2})\bigr]^\alpha \\
&\qquad\le K((\kappa_0\kappa(X))^\alpha,r) 
\Biggl[\sum_{j=1}^nw_j(X^{-1/2}A_jX^{-1/2})^\alpha\Biggr]^r. 
\end{align*}
Letting $K_2:=K((\kappa_0\kappa(X))^\alpha,r)$ we obtain
$$
\sum_{j=1}^nw_j\bigl[\lambda^{1-r}K_1^{-1}K_2^{-1/\alpha}(X^{-1/2}A_jX^{-1/2})
\bigr]^\alpha\le I,
$$
or equivalently,
$$
\sum_{j=1}^nw_j\bigl[(\lambda^{r-1}K_1K_2^{1/\alpha}X)\#_\alpha A_j^r\bigr]
\le\lambda^{r-1}K_1K_2^{1/\alpha}X.
$$
By Theorem \ref{T-2.1}\,(3) this implies that
$$
P_{\omega,\alpha}(A_1^r,\dots,A_n^r)
\le\lambda^{r-1}K_1K_2^{1/\alpha}P_{\omega,\alpha}(A_1,\dots,A_n),
$$
which is \eqref{F-5.4}.
\end{proof}

Next, let $M$ be a general $n$-variable operator mean and $\sigma$ be any operator mean with
$\sigma\ne\frak{l}$. The following is the reverse counterpart of \eqref{F-4.1}.

\begin{thm}\label{T-5.3}
Let $A_1,\dots,A_n\in\bP$ be such that $mI\le A_j\le MI$ for all $j$ for some scalars $0<m<M$.
Let $\kappa_0:=M/m$ and $\kappa(X):=\|X\|_\infty/\lambda_{\min}(X)$, where
$X:=M_\sigma(A_1,\dots,A_n)$. Then for any $r\ge1$,
\begin{align}
&K(\kappa_0\kappa(X),r)^{-1}\|M_\sigma(A_1,\dots,A_n)\|_\infty^{r-1}
M_\sigma(A_1,\dots,A_n) \nonumber\\
&\qquad\le M_{\sigma_{1/r}}(A_1^r,\dots,A_n^r) \nonumber\\
&\qquad\le K(\kappa_0\kappa(X),r)\lambda_{\min}^{r-1}(M_\sigma(A_1,\dots,A_n))
M_\sigma(A_1,\dots,A_n). \label{F-5.8}
\end{align}
\end{thm}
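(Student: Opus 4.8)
The plan is to mimic the proof of Theorem~\ref{T-4.1}, replacing the appeal to Hansen's inequality there by the reverse, Kantorovich-type estimate of Lemma~\ref{L-5.1}; this is exactly what produces the constant $K(\kappa_0\kappa(X),r)$. We may assume $r>1$, since for $r=1$ one has $f_{\sigma_{1/r}}(x)=f_\sigma(x)$, so $\sigma_{1/r}=\sigma$ and $M_{\sigma_{1/r}}=M_\sigma$, while $K(h,1)=1$, and \eqref{F-5.8} becomes trivial. Throughout write $X:=M_\sigma(A_1,\dots,A_n)$, $\lambda:=\lambda_{\min}(X)$ and $K_1:=K(\kappa_0\kappa(X),r)$.

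As in the proof of Theorem~\ref{T-4.1}, the first inequality in \eqref{F-5.8} follows from the second: replace $M,\sigma,A_j$ by $M^*,\sigma^*,A_j^{-1}$, use $(\sigma^*)_{1/r}=(\sigma_{1/r})^*$, $(M^*)_{\sigma^*}=(M_\sigma)^*$ and hence $(M^*)_{(\sigma^*)_{1/r}}=(M_{\sigma_{1/r}})^*$, and take inverses at the end. Here one checks that $M^{-1}I\le A_j^{-1}\le m^{-1}I$ leaves the outer ratio $\kappa_0=M/m$ unchanged, that $(M^*)_{\sigma^*}(A_1^{-1},\dots,A_n^{-1})=X^{-1}$ so $\kappa(X^{-1})=\kappa(X)$ and $K_1$ is unchanged, and that $\lambda_{\min}(X^{-1})=\|X\|_\infty^{-1}$, which converts the factor $\lambda_{\min}^{r-1}$ into $\|X\|_\infty^{r-1}$ after inverting.

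It remains to prove the second inequality. By \eqref{F-2.2} we have
$$
I=M\bigl(f_\sigma(X^{-1/2}A_1X^{-1/2}),\dots,f_\sigma(X^{-1/2}A_nX^{-1/2})\bigr).
$$
Set $D:=\lambda^{1/2}X^{-1/2}$, so that $D^2=\lambda X^{-1}$ obeys $\kappa(X)^{-1}I\le D^2\le I$. Lemma~\ref{L-5.1} applied with $A=A_j$, $C=D$ (whence $h_1=M/(m\kappa(X)^{-1})=\kappa_0\kappa(X)$) gives $DA_j^rD\le K_1(DA_jD)^r$, that is,
$$
K_1^{-1}\lambda^{1-r}\,X^{-1/2}A_j^rX^{-1/2}\le(X^{-1/2}A_jX^{-1/2})^r,\qquad 1\le j\le n.
$$
Since $0<1/r\le1$, the map $t\mapsto t^{1/r}$ is operator monotone, so raising both sides to the power $1/r$ yields $(K_1\lambda^{r-1})^{-1/r}(X^{-1/2}A_j^rX^{-1/2})^{1/r}\le X^{-1/2}A_jX^{-1/2}$. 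Putting $Y:=K_1\lambda^{r-1}X$ and recalling $f_{\sigma_{1/r}}(t)=f_\sigma(t^{1/r})$, applying $f_\sigma$ to the last inequality gives
$$
f_{\sigma_{1/r}}(Y^{-1/2}A_j^rY^{-1/2})\le f_\sigma(X^{-1/2}A_jX^{-1/2}),\qquad 1\le j\le n.
$$
Now apply $M$ (monotone) and use \eqref{F-2.2} once more to obtain $M\bigl(f_{\sigma_{1/r}}(Y^{-1/2}A_1^rY^{-1/2}),\dots,f_{\sigma_{1/r}}(Y^{-1/2}A_n^rY^{-1/2})\bigr)\le I$; conjugating by $Y^{1/2}$ via the congruence invariance (II) rewrites this as $M(Y\sigma_{1/r}A_1^r,\dots,Y\sigma_{1/r}A_n^r)\le Y$. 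By Theorem~\ref{T-2.1}\,(3) this forces $M_{\sigma_{1/r}}(A_1^r,\dots,A_n^r)\le Y=K_1\lambda^{r-1}X$, which is the second inequality in \eqref{F-5.8}.

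The one delicate point is the choice of conjugating operator when invoking Lemma~\ref{L-5.1}: one must take $D=\lambda_{\min}(X)^{1/2}X^{-1/2}$, whose square carries the \emph{sharp} lower bound $\kappa(X)^{-1}I$, so that $M/m$ becomes precisely $\kappa_0\kappa(X)$ in the Kantorovich constant; a cruder bound on $D^2$ would inflate $K_1$. Apart from this, the argument is a routine transcription of the proofs of Theorems~\ref{T-4.1} and~\ref{T-5.2}, and—in contrast to Theorem~\ref{T-3.1}—no induction on $r$ is needed, since both Lemma~\ref{L-5.1} and the operator monotonicity of $t\mapsto t^{1/r}$ are available for all $r>1$ at once.
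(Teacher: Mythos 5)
Your proof is correct and follows essentially the same route as the paper: reduce the first inequality to the second via adjoints as in Theorem \ref{T-4.1}, obtain \eqref{F-5.7} from Lemma \ref{L-5.1} with $C=\lambda_{\min}(X)^{1/2}X^{-1/2}$ exactly as in Theorem \ref{T-5.2}, raise to the power $1/r$, and conclude via monotonicity of $f_\sigma$, $M$ and Theorem \ref{T-2.1}\,(3). You have merely written out the details that the paper leaves to the reader.
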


\begin{proof}
As in the proof of Theorem \ref{T-4.1} we may prove the second inequality only. Let
$\lambda:=\lambda_{\min}(X)$ and $K_1:=K(\kappa_0\kappa(X),r)$. Then we have \eqref{F-5.7}
so that
$$
\bigl[\lambda^{1-r}K_1^{-1}(X^{-1/2}A_j^rX^{-1/2})\bigr]^{1/r}
\le X^{-1/2}A_jX^{-1/2}.
$$
Now the remaining proof is similar to that of Theorem \ref{T-4.1}, whose details may be
omitted.
\end{proof}

When $M=\cA_\omega$ or $\cH_\omega$ and $\sigma=\#_\alpha$, \eqref{F-5.8} gives the following:
For any $\alpha\in[-1,1]\setminus\{0\}$ and $r\ge1$,
\begin{align}
&K(\kappa_0\kappa(X),r)^{-1}\|P_{\omega,\alpha}(A_1,\dots,A_n)\|_\infty^{r-1}
P_{\omega,\alpha}(A_1,\dots,A_n) \nonumber\\
&\qquad\le P_{\omega,\alpha/r}(A_1^r,\dots,A_n^r) \nonumber\\
&\qquad\le K(\kappa_0\kappa(X),r)\lambda_{\min}^{r-1}(P_{\omega,\alpha}(A_1,\dots,A_n))
P_{\omega,\alpha}(A_1,\dots,A_n), \label{F-5.9}
\end{align}
where $X:=P_{\omega,\alpha}(A_1,\dots,A_n)$.

Letting $\alpha\to0$ in \eqref{F-5.9} gives the reverse counterpart of \eqref{F-3.13}.
This is also given by letting $\alpha\to0$ in \eqref{F-5.4} and \eqref{F-5.5} in view of
\eqref{F-5.2}.

\begin{cor}\label{C-5.4}
Let $A_j$, $\kappa_0$ and $\kappa(X)$ be as in Theorem \ref{T-5.2}, where
$X:=G_\omega(A_1,\dots,A_n)$. Then for any $r\ge1$,
\begin{align}
&K(\kappa_0\kappa(X),r)^{-1}\|G_\omega(A_1,\dots,A_n)\|_\infty^{r-1}
G_\omega(A_1,\dots,A_n) \nonumber\\
&\qquad\le G_\omega(A_1^r,\dots,A_n^r) \nonumber\\
&\qquad\le K(\kappa_0\kappa(X),r)\lambda_{\min}^{r-1}(G_\omega(A_1,\dots,A_n))
G_\omega(A_1,\dots,A_n). \label{F-5.10}
\end{align}
\end{cor}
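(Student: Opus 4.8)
The plan is to obtain \eqref{F-5.10} as the limiting case $\alpha\to0$ of the reverse inequalities already established for the power means, exactly as the sentence preceding the statement indicates. I would take \eqref{F-5.9} (equivalently \eqref{F-5.4} and \eqref{F-5.5} of Theorem \ref{T-5.2}, up to a factor that disappears in the limit) as the starting point: for each fixed $\alpha\in[-1,1]\setminus\{0\}$ and $r\ge1$, writing $X_\alpha:=P_{\omega,\alpha}(A_1,\dots,A_n)$,
\begin{align*}
K(\kappa_0\kappa(X_\alpha),r)^{-1}\|X_\alpha\|_\infty^{r-1}X_\alpha
\le P_{\omega,\alpha/r}(A_1^r,\dots,A_n^r)
\le K(\kappa_0\kappa(X_\alpha),r)\lambda_{\min}^{r-1}(X_\alpha)X_\alpha.
\end{align*}
Letting $\alpha\searrow0$ should produce the upper inequality of \eqref{F-5.10}, and letting $\alpha\nearrow0$ (equivalently $-\alpha\searrow0$) the lower one; note that the index $\alpha/r$ on the middle term tends to $0$ with the same sign as $\alpha$, so \eqref{F-2.3}--\eqref{F-2.4} applied to $A_1^r,\dots,A_n^r$ give $P_{\omega,\alpha/r}(A_1^r,\dots,A_n^r)\to G_\omega(A_1^r,\dots,A_n^r)$ in SOT, and likewise $X_\alpha\to X:=G_\omega(A_1,\dots,A_n)$ in SOT. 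If one uses \eqref{F-5.4}/\eqref{F-5.5} directly, the extra factor $K((\kappa_0\kappa(X_\alpha))^{\pm\alpha},r)^{\pm1/\alpha}$ tends to $1$ by \eqref{F-5.2}, so the two routes agree in the limit.

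The passage to the limit rests on: the SOT convergences just mentioned (non-strict operator inequalities pass to SOT limits); the scalar convergences recorded in the proof of Corollary \ref{C-3.4}, namely $\lambda_{\min}(X_\alpha)\searrow\lambda_{\min}(X)$ as $\alpha\searrow0$ and $\|P_{\omega,-\alpha}(A_1,\dots,A_n)\|_\infty\nearrow\|X\|_\infty$ as $\alpha\searrow0$; and continuity of $h\mapsto K(h,r)$ on $(1,\infty)$, which forces $K(\kappa_0\kappa(X_\alpha),r)\to K(\kappa_0\kappa(X),r)$ once $\kappa(X_\alpha)\to\kappa(X)$, using $\kappa_0\kappa(X_\alpha)\ge\kappa_0>1$. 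The step I expect to be the main obstacle is precisely this convergence of the condition number $\kappa(X_\alpha)=\|X_\alpha\|_\infty/\lambda_{\min}(X_\alpha)$: the norm is continuous along increasing nets and $\lambda_{\min}$ along decreasing nets, but each of $\alpha\searrow0$ and $\alpha\nearrow0$ puts one of the two quantities in the unfavourable direction, so monotone SOT convergence alone does not suffice. I would resolve it by upgrading $P_{\omega,\pm\alpha}(A_1,\dots,A_n)\to X$ to convergence in the Thompson metric: since $P_{\omega,-\alpha}(A_1,\dots,A_n)\le X\le P_{\omega,\alpha}(A_1,\dots,A_n)$, monotonicity of $\dT$ on order intervals bounds $\dT(P_{\omega,\pm\alpha}(A_1,\dots,A_n),X)$ by $\dT(P_{\omega,-\alpha}(A_1,\dots,A_n),P_{\omega,\alpha}(A_1,\dots,A_n))$, which tends to $0$ (the quantitative refinement of \eqref{F-2.3}--\eqref{F-2.4} for power means, \cite{LL2}); as the Thompson and operator-norm topologies on $\bP$ coincide, this gives $X_\alpha\to X$ in norm, hence $\kappa(X_\alpha)\to\kappa(X)$. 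Then each side of the displayed inequality converges to the corresponding side of \eqref{F-5.10}.

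Alternatively — and this bypasses the condition-number issue entirely — one can argue directly, mimicking the proof of Theorem \ref{T-5.2} with the Karcher equation in place of \eqref{F-5.6}. With $X:=G_\omega(A_1,\dots,A_n)$, $\lambda:=\lambda_{\min}(X)$ and $C:=\lambda^{1/2}X^{-1/2}$ (so that $\kappa(X)^{-1}I\le C^2\le I$), Lemma \ref{L-5.1} gives $\lambda^{1-r}K_1^{-1}X^{-1/2}A_j^rX^{-1/2}\le(X^{-1/2}A_jX^{-1/2})^r$ with $K_1:=K(\kappa_0\kappa(X),r)$; taking logarithms, averaging against $\omega$ and invoking the Karcher equation $\sum_jw_j\log(X^{-1/2}A_jX^{-1/2})=0$ yields $\sum_jw_j\log\bigl((\lambda^{r-1}K_1X)^{-1/2}A_j^r(\lambda^{r-1}K_1X)^{-1/2}\bigr)\le0$, whence the sub/super-solution characterization of the Karcher mean recalled in Remark \ref{R-2.6} gives $G_\omega(A_1^r,\dots,A_n^r)\le\lambda^{r-1}K_1X$, i.e.\ the upper bound of \eqref{F-5.10}. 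The lower bound then follows by applying this to $A_1^{-1},\dots,A_n^{-1}$ and using the self-duality $G_\omega(B_1^{-1},\dots,B_n^{-1})^{-1}=G_\omega(B_1,\dots,B_n)$ together with $\kappa(X^{-1})=\kappa(X)$ and $\lambda_{\min}(X^{-1})=\|X\|_\infty^{-1}$.
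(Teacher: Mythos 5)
Your first route is exactly the paper's argument: Corollary \ref{C-5.4} is obtained there by letting $\alpha\to0$ in \eqref{F-5.9}, or in \eqref{F-5.4}--\eqref{F-5.5} together with \eqref{F-5.2}, and nothing more is said. You are right to single out the constant as the only delicate point: \eqref{F-2.3}--\eqref{F-2.4} are monotone SOT convergences, which control $\lambda_{\min}(P_{\omega,\alpha})$ and $\|P_{\omega,-\alpha}\|_\infty$ (as in the proof of Corollary \ref{C-3.4}) but not the opposite quantities, while $\kappa(X_\alpha)$ involves both at once, so in the infinite-dimensional setting the passage $K(\kappa_0\kappa(X_\alpha),r)\to K(\kappa_0\kappa(X),r)$ is not automatic (in finite dimensions it is trivial). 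Your repair is logically sound as far as it goes — the order-interval estimate $\dT(P_{\omega,\pm\alpha},X)\le\dT(P_{\omega,-\alpha},P_{\omega,\alpha})$ is correct — but the input $\dT(P_{\omega,-\alpha},P_{\omega,\alpha})\to0$ is not what \eqref{F-2.3}--\eqref{F-2.4} or the cited reference plainly give (what is established there is strong convergence), so that step would need an actual reference or proof rather than being quoted as a quantitative refinement.

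Your second route is genuinely different from the paper's and is complete at the same level of rigor the paper itself adopts: it is the proof of Theorems \ref{T-5.2} and \ref{T-5.3} transplanted to the Karcher mean, with \eqref{F-5.6} replaced by the Karcher equation and Theorem \ref{T-2.1}\,(3) replaced by the sub/super-solution property of $G_\omega$ recalled in Remark \ref{R-2.6} (i.e.\ \cite{Ya1}). The computation checks out: Lemma \ref{L-5.1} with $C=\lambda_{\min}^{1/2}(X)X^{-1/2}$ gives \eqref{F-5.7} for $X=G_\omega(A_1,\dots,A_n)$ and $K_1=K(\kappa_0\kappa(X),r)$; taking logarithms, averaging against $\omega$ and using $\sum_jw_j\log(X^{-1/2}A_jX^{-1/2})=0$ yields $\sum_jw_j\log\bigl(Y^{-1/2}A_j^rY^{-1/2}\bigr)\le0$ for $Y=\lambda_{\min}^{r-1}(X)K_1X$, whence the upper bound of \eqref{F-5.10}; the lower bound then follows from self-duality of $G_\omega$ with $\kappa(X^{-1})=\kappa(X)$ and $\lambda_{\min}(X^{-1})=\|X\|_\infty^{-1}$. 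What this buys is the complete avoidance of the limiting procedure and hence of the condition-number issue above; what it costs is the reliance on the Karcher-equation characterization and Yamazaki's order result, which the paper invokes in the same generality anyway. So the proposal is correct: its first half coincides with the paper's proof (and makes explicit a point the paper glides over, though your fix as stated is under-supported), and its second half is an independent, self-contained proof.
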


\begin{remark}\rm
Inequality \eqref{F-5.10} improves \eqref{F-3.13} in some situations. For instance, when
$r=2$, the case $K(\kappa_0\kappa(X),2)\lambda_{\min}(X)<\|X\|_\infty$ for
$X=G_\omega(A_1,\dots,A_n)$ occurs, which is equivalent to
$$
(\kappa_0\kappa(X)+1)^2<4\kappa_0\kappa(X)^2.
$$
In fact, this happens when $1<\kappa_0<4$ and $\kappa(X)>[\sqrt{\kappa_0}(2-\sqrt{\kappa_0})]^{-1}$.
\end{remark}

\section{Optimality of $r\ge1$ or $r\le1$}

In previous sections we have shown different Ando-Hiai type inequalities involving the
power $r\ge1$ or $0<r\le1$. In this section we consider the problem of best possibility of
the condition on $r>0$ for some inequalities in Sections 3 and 4. To do this, we may confine
ourselves to $2$-variable operator means. For example, when the $n$-variable weighted power
mean $P_{\omega,\alpha}$ with a non-trivial weight $\omega$ is concerned, we can consider a
non-trivial $2$-variable power mean $(A,B)\mapsto P_{\omega,\alpha}(A,\dots,A,B)$. Then the
optimality problem for the $n$-variable case can be reduced to the $2$-variable case.

As for the weaker formulation in \eqref{F-3.5} restricted to the $2$-variable case, it was
shown in \cite[Corollary 3.1]{Wa2} that if $\sigma$ is a p.m.i.\ operator mean with
$\sigma\ne\frak{l},\frak{r}$ and $r>0$, then $A\sigma B\ge I$ $\implies$ $A^r\sigma B^r\ge I$
hods for every $A,B\in\bP$ if and only if $r\ge1$. We can directly verify this when $\sigma$
is the power mean treated in Corollary \ref{C-3.3}. Let $0<\alpha\le1$ and $0<w<1$. The weaker
formulation of \eqref{F-3.9} for scalars $A=a$ and $B=b$ implies that
$P_{w,\alpha}(a,b)\ge 1$ $\implies$ $P_{w,\alpha}(a^r,b^r)\ge 1$. For any $x>0$ let
$y:=P_{w,\alpha}(1,x)$; then we have $P_{w,\alpha}(1/y^r,x^r/y^r)\ge1$ and hence
$P_{w,\alpha}(1,x^r)\ge y^r$. This means that
$$
(1-w+wx^{\alpha r})^{1/\alpha}\ge(1-w+wx^\alpha)^{r/\alpha}\qquad\mbox{for $x>0$},
$$
which obviously holds only when $r\ge1$. Furthermore, the complementary version in
\eqref{F-3.11} for scalars $A=1$ and $B=x$ gives
$$
(1-w+wx^{\alpha r})^{1/\alpha}\le(1-w+wx^\alpha)^{r/\alpha}\qquad\mbox{for $x>0$}.
$$
This holds only when $r\le1$. From these arguments on the special case of power
means as well as the result in \cite{Wa2} mentioned above, we see that the condition $r\ge1$
or $r\le1$ is essential for Ando-Hiai type inequalities in Section 3. 

As for inequalities in Section 4 we focus on the inequalities in Corollary \ref{C-4.5}. Let
$\tau$ be an arbitrary operator mean with the representing function $f_\tau$. To consider
inequalities \eqref{F-4.8} and \eqref{F-4.9}, we define, for an arbitrary $r>0$,
$(f_\tau)_{[r]}(x):=f_\tau(x^r)^{1/r}$, $x>0$, and write
$$
A\tau_{[r]}B:=A^{-1/2}(f_\tau)_{[r]}(A^{-1/2}BA^{-1/2})A^{1/2},\qquad A,B\in\bP.
$$
Although $(f_\tau)_{[r]}$ for $r>1$ is not necessarily an operator monotone function so that
$\tau_{[r]}$ may not be an operator mean, the above expression $A\tau_{[r]}B$ indeed makes
sense even when $A\in\bP$ and $B\in B(\cH)^+$.

Concerning \eqref{F-4.9} we show the following:

\begin{prop}\label{P-6.1}
\begin{align}
(0,1]&=\bigl\{r>0:\|A\tau_{[r]}B\|_\infty^{r-1}(A\tau_{[r]}B)\le A^r\tau B^r,
\ A,B\in\bP\bigr\}, \label{F-6.1}\\
(0,1]&=\bigl\{r>0:A^r\tau B^r\le\lambda_{\min}^{r-1}(A\tau_{[r]}B)(A\tau_{[r]}B),
\ A,B\in\bP\bigr\}. \label{F-6.2}
\end{align}
\end{prop}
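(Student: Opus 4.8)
The plan is to prove the two identities by establishing the two inclusions separately; for the reverse inclusion a single test case handles \eqref{F-6.1} and \eqref{F-6.2} uniformly in $\tau$.

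The inclusion of $(0,1]$ in the right-hand set of each identity is nothing but inequality \eqref{F-4.9} of Corollary \ref{C-4.5}\,(1), which already asserts that for every $r\in(0,1]$ and all $A,B\in\bP$,
$$
\|A\tau_{[r]}B\|_\infty^{r-1}(A\tau_{[r]}B)\le A^r\tau B^r
\le\lambda_{\min}^{r-1}(A\tau_{[r]}B)(A\tau_{[r]}B).
$$
So for this half I would simply quote that corollary.

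For the reverse inclusion I would show that no $r>1$ lies in either set by testing the inequalities at $B=A$ with $A$ non-scalar. The point is that $B=A$ trivializes $\tau$ and $\tau_{[r]}$ at once: since $f_\tau(1)=1$ we get $(f_\tau)_{[r]}(1)=f_\tau(1)^{1/r}=1$, whence $A\tau_{[r]}A=A$ and $A^r\tau A^r=A^r$ for every operator mean $\tau$ and every $r>0$ (just as $X\tau X=X$ for all $X\in\bP$). Now fix $A\in\bP$ with $A\neq\lambda I$ for all $\lambda>0$, say $A=I+Q$ with $Q$ a nonzero proper orthogonal projection, so that $\lambda_{\min}(A)=1<2=\|A\|_\infty$. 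With $B=A$, the inequality in \eqref{F-6.1} reads $\|A\|_\infty^{r-1}A\le A^r$, i.e.\ $\|A\|_\infty^{r-1}I\le A^{r-1}$, and the one in \eqref{F-6.2} reads $A^r\le\lambda_{\min}(A)^{r-1}A$, i.e.\ $A^{r-1}\le\lambda_{\min}(A)^{r-1}I$. For $r>1$ the function $x\mapsto x^{r-1}$ is strictly increasing on $(0,\infty)$, hence $\lambda_{\min}(A^{r-1})=1<2^{r-1}=\|A\|_\infty^{r-1}$ and $\|A^{r-1}\|_\infty=2^{r-1}>1=\lambda_{\min}(A)^{r-1}$; each of these contradicts the corresponding displayed inequality. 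Therefore $r>1$ belongs to neither set, and combined with the first inclusion this yields both identities.

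I do not anticipate a genuine obstacle: the argument is short. The only things to keep straight are that $B=A$ collapses $A\tau_{[r]}B$ to $A$ for \emph{every} operator mean $\tau$ (so the trivial means $\frak{l}$, $\frak{r}$, and indeed any $\tau$, need no separate treatment, and one only needs that $A\tau_{[r]}A$ makes sense for $r>1$, which it does even though $\tau_{[r]}$ may then fail to be a mean), together with the elementary equivalences $cI\le X\iff c\le\lambda_{\min}(X)$ and $X\le cI\iff\|X\|_\infty\le c$ and the monotonicity of $x\mapsto x^{r-1}$ for $r>1$.
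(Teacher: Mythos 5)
Your proposal is correct, and it follows the paper's overall scheme (the inclusion $(0,1]\subseteq\{\cdots\}$ is quoted from \eqref{F-4.9} in both), but the hard half is done by a genuinely different test case. The paper first reduces \eqref{F-6.2} to \eqref{F-6.1} by replacing $\tau$ with $\tau^*$, and then tests \eqref{F-6.1} at $A=I$, $B=\mathrm{diag}(1,x)$ with $0<x\le1$; there $\|A\tau_{[r]}B\|_\infty=1$ and the inequality collapses to the scalar condition $f_\tau(x^r)^{1/r}\le f_\tau(x^r)$, i.e.\ $f_\tau(x^r)^{r-1}\ge1$, which for $r>1$ forces $f_\tau\equiv1$ on $(0,1]$; so the paper's conclusion ``hence $r\le1$'' tacitly uses $\tau\ne\frak{l}$ (the left trivial mean has to be handled separately, essentially by an argument like yours). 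Your choice $B=A$ with $A$ non-scalar eliminates $\tau$ altogether, since $A\tau_{[r]}A=A$ and $A^r\tau A^r=A^r$ for \emph{every} $\tau$ and every $r>0$, so both \eqref{F-6.1} and \eqref{F-6.2} are refuted for $r>1$ directly, uniformly in $\tau$ (including $\frak{l}$ and $\frak{r}$) and without passing to adjoints; the spectral reformulations $\|A\|_\infty^{r-1}I\le A^{r-1}$ and $A^{r-1}\le\lambda_{\min}(A)^{r-1}I$ and their failure for $A=I+Q$ are correct. What the paper's computation buys instead is pointwise information on $f_\tau$ (the representing-function inequality itself), which is in the spirit of its later arguments but not needed for this proposition; your route is shorter and slightly more complete on the trivial-mean edge case.
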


\begin{proof}
We may only prove that the inequality in \eqref{F-6.1} holds for all $A,B\in\bP$ only when
$r<1$, since the other direction is guaranteed by \eqref{F-4.9} and \eqref{F-6.2} follows
from \eqref{F-6.1} by replacing $\tau$ with $\tau^*$. So assume the inequality in \eqref{F-6.1}.
Let $A=\begin{bmatrix}1&0\\0&1\end{bmatrix}$ and $B=\begin{bmatrix}1&0\\0&x\end{bmatrix}$ with
$0<x\le1$. Then
$$
A\tau_{[r]}B=\begin{bmatrix}1&0\\0&f_\tau(x^r)^{1/r}\end{bmatrix},\qquad
A^r\tau B^r=\begin{bmatrix}1&0\\0&f_\tau(x^r)\end{bmatrix}.
$$
Since $\|A\tau_{[r]}B\|_\infty=1$, we have $A\tau_{[r]}B\le A^r\tau B^r$ and hence
$f_\tau(x^r)^{1/r}\le f_\tau(x^r)$, which implies that $f_\tau(x^r)^{r-1}\ge1$ for all
$x\in(0,1]$. Hence $r\le1$ follows.
\end{proof}

Finally, we give the optimality result for the weaker formulation of \eqref{F-4.8}. When
$\tau=\#_\alpha$ with $0<\alpha<1$, the result reduces to \cite[Theorem 3.1]{Wa2}.

\begin{prop}\label{P-6.2}
Assume that $\tau\ne\frak{l},\frak{r}$. Then
\begin{align}
[1,\infty)&=\bigl\{r>0:A,B\in\bP,\ A\tau B\le I\,\implies\,A^r\tau_{[1/r]}B^r\le I\bigr\},
\label{F-6.3}\\
[1,\infty)&=\bigl\{r>0:A,B\in\bP,\ A\tau B\ge I\,\implies\,A^r\tau_{[1/r]}B^r\ge I\bigr\}. \label{F-6.4}
\end{align}
\end{prop}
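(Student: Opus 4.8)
The plan is, for each of the two displayed identities, to prove the inclusion $[1,\infty)\subseteq\{\dots\}$ and then the reverse one. The first is immediate from Corollary \ref{C-4.5}\,(1): if $r\ge1$ and $A\tau B\le I$, then $\|A\tau B\|_\infty\le1$, so the second inequality in \eqref{F-4.8} gives $A^r\tau_{[1/r]}B^r\le\|A\tau B\|_\infty^{r-1}(A\tau B)\le A\tau B\le I$, which is \eqref{F-6.3}; dually the first inequality in \eqref{F-4.8} gives \eqref{F-6.4}. Moreover the two sets coincide: replacing $(A,B)$ by $(A^{-1},B^{-1})$ turns ``$A\tau B\le I$'' into ``$A\tau^*B\ge I$'' and ``$A^r\tau_{[1/r]}B^r\le I$'' into ``$A^r(\tau^*)_{[1/r]}B^r\ge I$'' (using $A^{-1}\tau B^{-1}=(A\tau^*B)^{-1}$ and $(\tau_{[1/r]})^*=(\tau^*)_{[1/r]}$), so the implication in \eqref{F-6.3} for $\tau$ is equivalent to that in \eqref{F-6.4} for $\tau^*$; since $\tau\ne\frak{l},\frak{r}$ if and only if $\tau^*\ne\frak{l},\frak{r}$, it suffices to show that the set in \eqref{F-6.3} is contained in $[1,\infty)$.

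So fix $r\in(0,1)$; the aim is to produce $A,B\in\bP$ with $A\tau B\le I$ but $A^r\tau_{[1/r]}B^r\not\le I$. Commuting $A,B$ cannot work: then $A^r\tau_{[1/r]}B^r=(A\tau B)^r$ by the scalar identity $(f_\tau)_{[1/r]}(t^r)=f_\tau(t)^r$ applied on a common spectral decomposition, so $A\tau B\le I$ would force $A^r\tau_{[1/r]}B^r\le I$; hence the example must be genuinely noncommutative and it is enough to take $\cH=\bC^2$. Put $\beta_0:=f_\tau(0^+)$; since $\tau\ne\frak{l}$ one has $0\le\beta_0<1$. I would take $A:=\mathrm{diag}(a,1)$ with $a>1$ (and $a<1/\beta_0$ when $\beta_0>0$), a unit vector $u$ making angle $\phi\in(0,\pi/2)$ with the first axis, $P_u:=uu^*$, and $B_\delta:=bP_u+\delta(I-P_u)$ with $b,\delta>0$, letting $\delta\searrow0$. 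Using the monotone continuity of $\tau$ (resp.\ $\tau_{[1/r]}$) and the formula $A\tau C=A^{1/2}f_\tau(A^{-1/2}CA^{-1/2})A^{1/2}$ for $C\in B(\cH)^+$, together with $A^{-1/2}P_uA^{-1/2}=\eta\,\hat v\hat v^*$ where $\eta:=\langle A^{-1}u,u\rangle$ and $\hat v:=A^{-1/2}u/\|A^{-1/2}u\|$, and $B_\delta^r=b^rP_u+\delta^r(I-P_u)$, one gets the limits
\[
A\tau B_\delta\longrightarrow\beta_0A+\frac{f_\tau(b\eta)-\beta_0}{\eta}\,P_u,\qquad
A^r\tau_{[1/r]}B_\delta^r\longrightarrow\beta_0^rA^r+\frac{f_\tau(b\,\eta_r^{1/r})^r-\beta_0^r}{\eta_r}\,P_u,\qquad\eta_r:=\langle A^{-r}u,u\rangle,
\]
the second because $\tau_{[1/r]}$ has representing function $(f_\tau)_{[1/r]}(x)=f_\tau(x^{1/r})^r$, of value $\beta_0^r$ at $0^+$.

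For a $2\times2$ matrix $\beta_0A+cP_u$ with $c\ge0$, expanding the $2\times2$ determinant shows $\beta_0A+cP_u\le I$ is equivalent to the single scalar inequality $c\le c_*(a,\phi):=\dfrac{(1-\beta_0a)(1-\beta_0)}{(1-\beta_0a)\sin^2\phi+(1-\beta_0)\cos^2\phi}$, and likewise $\beta_0^rA^r+c_rP_u\le I$ is equivalent to $c_r\le c_{*,r}(a,\phi)$ (the same formula with $\beta_0,a$ replaced by $\beta_0^r,a^r$). I would then pick $b=b_*$ so that the first limit sits exactly on the boundary, $f_\tau(b_*\eta)=\beta_0+\eta\,c_*(a,\phi)$; this is solvable since $\beta_0+\eta c_*(a,\phi)<\beta_0+(1-\beta_0)=1\le f_\tau(\infty)$, using $\eta<1$ and $c_*(a,\phi)\le1-\beta_0$ for $a>1$. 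Thus $A\tau B_\delta$ tends to a matrix that is $\le I$. For the second limit one invokes, first, Jensen's inequality (concavity of $t\mapsto t^r$, $0<r<1$), which gives $\langle A^{-r}u,u\rangle\le\langle A^{-1}u,u\rangle^r$, i.e.\ $\eta_r^{1/r}\le\eta$, strictly because $u$ is not an eigenvector of $A$; and second, the concavity of $f_\tau$: comparing $f_\tau$ on $[0,b_*\eta]$ with its chord from $(0,\beta_0)$ to $(b_*\eta,f_\tau(b_*\eta))$ yields $f_\tau(b_*\eta_r^{1/r})\ge\beta_0+c_*(a,\phi)\,\eta_r^{1/r}$, which is strict when $\beta_0=0$ since then $\tau\ne\frak{r}$ forces $f_\tau$ (being operator monotone, hence analytic and concave, and non-affine) to be strictly concave. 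A short scalar estimate then gives $\dfrac{f_\tau(b_*\eta_r^{1/r})^r-\beta_0^r}{\eta_r}>c_{*,r}(a,\phi)$ for a suitable $a>1$ and $\phi\in(0,\pi/2)$, so the second limit is $\not\le I$. Finally one passes from limits to honest elements: for $\delta$ small, $\|A\tau B_\delta\|_\infty<1+\varepsilon$ while $A^r\tau_{[1/r]}B_\delta^r$ has an eigenvalue $>1+c_0$ for a fixed $c_0>0$; replacing $(A,B_\delta)$ by $(\lambda A,\lambda B_\delta)$ with $\lambda\in\bigl((1+c_0)^{-1/r},1\bigr)$ — a nonempty interval precisely because $r<1$ — and using homogeneity of $\tau$ and of $\tau_{[1/r]}$, one gets $(\lambda A)\tau(\lambda B_\delta)<I$ yet $(\lambda A)^r\tau_{[1/r]}(\lambda B_\delta)^r=\lambda^r\bigl(A^r\tau_{[1/r]}B_\delta^r\bigr)$ still has an eigenvalue $>1$. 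Hence $r$ is not in the set of \eqref{F-6.3}.

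I expect the main obstacle to be exactly the constructive last part: making the two limit formulas rigorous (this is where the behaviour of $f_\tau$ at $0$ and $\infty$ enters, and why the degenerate cases $f_\tau(0^+)>0$ and/or $f_\tau(\infty)<\infty$ deserve separate attention), and then verifying the scalar inequality $\dfrac{f_\tau(b_*\eta_r^{1/r})^r-\beta_0^r}{\eta_r}>c_{*,r}(a,\phi)$ in full generality — for every $\tau\ne\frak{l},\frak{r}$ and every $r\in(0,1)$. Its qualitative engine is the combination of the strict gap $\eta_r^{1/r}<\eta$ coming from Jensen's inequality and the concavity (strict concavity when $\beta_0=0$) of $f_\tau$, both of which are available precisely because $\tau\ne\frak{l},\frak{r}$; locating good values of $a$ and $\phi$ and controlling the (small but positive) margin is the delicate point.
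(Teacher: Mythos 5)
Your overall architecture matches the paper's: get $[1,\infty)\subseteq$ both sets from Corollary \ref{C-4.5}, reduce \eqref{F-6.4} to \eqref{F-6.3} via adjoints, and for $r\in(0,1)$ disprove the implication with a $2\times2$ example in which $B$ is (in the limit) a rank-one projection, splitting into the cases $f_\tau(0^+)=0$ and $f_\tau(0^+)>0$. Your limit formulas for $A\tau(bP_u)$ and $A^r\tau_{[1/r]}(b^rP_u)$ are correct, the thresholds $c_*$, $c_{*,r}$ are right, the final rescaling by $\lambda$ to pass from the semidefinite limit back to $\bP$ is sound, and the case $f_\tau(0^+)=0$ can indeed be closed by strict concavity of the (analytic, non-affine) $f_\tau$ together with the strict Jensen gap $\eta_r^{1/r}<\eta$; this is essentially the paper's case $f_\tau(0)=0$, which it settles more directly via the transpose function $f_{\tau'}$ and a choice of $x,y$ with $\bigl({x^r+y^r\over2}\bigr)^{1/r}<1<{x+y\over2}$.

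The genuine gap is the case $f_\tau(0^+)>0$, which you yourself flag as unresolved: everything hinges on the scalar inequality $\frac{f_\tau(b_*\eta_r^{1/r})^r-\beta_0^r}{\eta_r}>c_{*,r}(a,\phi)$ for some admissible $(a,\phi)$, and your proposed engine does not deliver it. The concavity chord bound only gives $f_\tau(b_*\eta_r^{1/r})\ge\beta_0+c_*\eta_r^{1/r}$ with no strictness in general --- for the weighted arithmetic means $\triangledown_w$ (where $\beta_0=1-w>0$) it is an identity --- so you are reduced to proving, for every $\beta_0\in(0,1)$ and $r\in(0,1)$, that $(\beta_0+c_*\eta_r^{1/r})^r-\beta_0^r>c_{*,r}\,\eta_r$ for some $a\in(1,1/\beta_0)$ and $\phi$, a four-parameter inequality you never establish; numerically its margin is extremely thin (e.g.\ for $\beta_0$ near $1$), so it genuinely requires an argument rather than a qualitative appeal to Jensen plus concavity, and it is not clear a fixed ``macroscopic'' choice of $(a,\phi)$ works uniformly. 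This is exactly the part the paper's proof supplies by different means: it works perturbatively with $x=1+\eps$, $y=1-\eps$ and a tilt $t=(1+k\eps)/2$, expands the determinant of the difference to order $\eps^2$, and shows via the strict decrease of $F(r)={r\over1-\alpha^r}-(2r-1)+(1+\beta){r-1\over2}$ that for each $r\in(0,1)$ a value of $k$ separates ``$A\tau B\le I$'' from ``$A^r\tau_{[1/r]}B^r\le I$''. Until you either prove your scalar inequality for suitable $(a,\phi)$ or replace it by such a second-order analysis (note the extra free parameter $k$, which your fixed-angle construction lacks), the proof is incomplete precisely in the hardest case.
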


\begin{proof}
Since \eqref{F-6.4} follows from \eqref{F-6.3} as in the proof of Proposition \ref{P-6.1},
we may only prove that the property in \eqref{F-6.3} holds only when $r\ge1$. Assume the
property in \eqref{F-6.3}. Then the same holds for any positive definite matrix $A$ and
positive semidefinite matrix $B$. In fact, assume that $A\tau B\le I$, and for each $\eps>0$
define $B_\eps:=B+\eps I$, $\beta_\eps:=\|A\tau B_\eps\|_\infty$,
$\widetilde A_\eps:=\beta_\eps^{-1}A$ and $\widetilde B_\eps:=\beta_\eps^{-1}B_\eps$. Then,
since $\widetilde A_\eps\tau\widetilde B_\eps\le I$, we have
$\widetilde A_\eps^r\tau_{[1/r]}\widetilde B_\eps^r\le I$ so that
$A^r\tau_{[1/r]}B_\eps^r\le\beta_\eps^rI$. Letting $\eps\searrow0$ in both sides gives
$A^r\tau_{[1/r]}B^r\le I$.

Consider $2\times2$ matrices
$$
A=\begin{bmatrix}x^{-1}&0\\0&y^{-1}\end{bmatrix},\qquad x,y>0,
$$
\begin{align*}
B&=\begin{bmatrix}\sqrt t&-\sqrt{1-t}\\\sqrt{1-t}&\sqrt t\end{bmatrix}
\begin{bmatrix}1&0\\0&0\end{bmatrix}
\begin{bmatrix}\sqrt t&\sqrt{1-t}\\-\sqrt{1-t}&\sqrt t\end{bmatrix} \\
&=\begin{bmatrix}t&\sqrt{t(1-t)}\\\sqrt{t(1-t)}&1-t\end{bmatrix},
\qquad0<t<1.
\end{align*}
Compute
$$
A^{-1/2}BA^{-1/2}=\begin{bmatrix}xt&\sqrt{xyt(1-t)}\\\sqrt{xyt(1-t)}&y(1-t)\end{bmatrix}
=U\begin{bmatrix}xt+y(1-t)&0\\0&0\end{bmatrix}U^*
$$
with a unitary matrix
$$
U:=\begin{bmatrix}\sqrt{xt\over xt+y(1-t)}&-\sqrt{y(1-t)\over xt+y(1-t)}\\
\sqrt{y(1-t)\over xt+y(1-t)}&\sqrt{xt\over xt+y(1-t)}\end{bmatrix}.
$$
Hence we find that
\begin{align}
A\tau B\le I&\iff f_\tau(A^{-1/2}BA^{-1/2})\le A^{-1} \nonumber\\
&\iff\begin{bmatrix}f_\tau(xt+y(1-t))&0\\0&f_\tau(0)\end{bmatrix}\le
U^*\begin{bmatrix}x&0\\0&y\end{bmatrix}U \nonumber\\
&\iff(xt+y(1-t))\begin{bmatrix}f_\tau(xt+y(1-t))&0\\0&f_\tau(0)\end{bmatrix} \nonumber\\
&\qquad\qquad\le\begin{bmatrix}x^2t+y^2(1-t)&-(x-y)\sqrt{xyt(1-t)}\\
-(x-y)\sqrt{xyt(1-t)}&xy\end{bmatrix}. \label{F-6.5}
\end{align}
We divide the rest of the proof into the case $f_\tau(0)=0$ and the case $f_\tau(0)>0$.

{\it Case $f_\tau(0)=0$}.\enspace
Let $t=1/2$. Then
$$
A\tau B\le I\iff
\begin{bmatrix}{x+y\over2}f_\tau\bigl({x+y\over2}\bigr)&0\\0&0\end{bmatrix}
\le\begin{bmatrix}{x^2+y^2\over2}&-(x-y){\sqrt{xy}\over2}\\
-(x-y){\sqrt{xy}\over2}&xy\end{bmatrix},
$$
which is equivalent to
$$
\biggl\{{x^2+y^2\over2}-{x+y\over2}f_\tau\biggl({x+y\over2}\biggr)\biggr\}xy
\ge(x-y)^2{xy\over4}.
$$
That is,
$$
{2\over x+y}\,f_\tau\biggl({x+y\over2}\biggr)\le1.
$$
or equivalently,
\begin{align}\label{F-6.6}
f_{\tau'}\biggl({2\over x+y}\biggr)\le1,
\end{align}
where $\tau'$ is the transpose of $\tau$ so that $f_{\tau'}(x)=xf_\tau(1/x)$, $x>0$.
For any $r>0$, since $B^r=B$, we find by replacing $x,y,f_\tau$ with
$x^r,y^r,(f_\tau)_{[1/r]}$ that
\begin{align}
A^r\tau_{[1/r]}B^r\le I
&\iff{2\over x^r+y^r}\,(f_\tau)_{[1/r]}\biggl({x^r+y^r\over2}\biggr)\le1 \nonumber\\
&\iff\biggl({2\over x^r+y^r}\biggr)^{1/r}
f_\tau\biggl(\biggl({x^r+y^r\over2}\biggr)^{1/r}\biggr)\le1 \nonumber\\
&\iff f_{\tau'}\biggl(\biggl({2\over x^r+y^r}\biggr)^{1/r}\biggr)\le1. \label{F-6.7}
\end{align}
For any $0<r<1$ there are $x,y>0$ such that $\bigl({x^r+y^r\over2}\bigr)^{1/r}<1<{x+y\over2}$,
i.e., $\bigl({2\over x^r+y^r}\bigr)^{1/r}>1>{2\over x+y}$. Then \eqref{F-6.6} holds but
\eqref{F-6.7} does not since $f_{\tau'}$ is strictly increasing due to $\tau\ne\frak{r}$.

{\it Case $f_\tau(0)>0$}.\enspace
Let $r>0$. Similarly to \eqref{F-6.5} we find that
\begin{align}
&A^r\tau_{[1/r]}B^r\le I \nonumber\\
&\quad\iff(x^rt+y^r(1-t))
\begin{bmatrix}(f_\tau)_{[1/r]}(x^rt+y^r(1-t))&0\\0&(f_\tau)_{[1/r]}(0)\end{bmatrix}
\nonumber\\
&\qquad\qquad\le\begin{bmatrix}x^{2r}t+y^{2r}(1-t)&-(x^r-y^r)\sqrt{x^ry^rt(1-t)}\\
-(x^r-y^r)\sqrt{x^ry^rt(1-t)}&x^ry^r\end{bmatrix}. \label{F-6.8}
\end{align}
Set $\alpha:=f_\tau(0)$ and $\beta:=f_\tau'(1)$, so that $\alpha,\beta\in(0,1)$ since
$\tau\ne\frak{l},\frak{r}$. Note that $(f_\tau)_{[1/r]}(0)=\alpha^r$ and
$(f_\tau)_{[1/r]}'(1)=\beta$. Let $x=1+\eps$, $y=1-\eps$ and $t=(1+k\eps)/2$ for $\eps>0$ small
and some fixed $k\in\bR$ (that will be determined later). As $\eps\searrow0$, one can
approximately compute
$$
x^rt+y^r(1-t)=1+r\biggl(k+{r-1\over2}\biggr)\eps^2+o(\eps^2),
$$
$$
(f_\tau)_{[1/r]}(x^rt+y^r(1-t))=1+\beta r\biggl(k+{r-1\over2}\biggr)\eps^2+o(\eps^2),
$$
$$
x^{2r}t+y^{2r}(1-t)=1+r(2k+2r-1)\eps^2+o(\eps^2),
$$
$$
x^ry^r=1-r\eps^2+o(\eps^2),
$$
$$
(x^r-y^r)\sqrt{x^ry^rt(1-t)}=r\eps\sqrt{(1-r\eps^2)(1-k^2\eps^2)}+o(\eps^2).
$$
From these one can further compute the determinant of the difference of both sides of
\eqref{F-6.8} as follows:
\begin{align*}
&\Bigl\{(x^{2r}t+y^{2r}(1-t))-(x^rt+y^r(1-t))(f_\tau)_{[1/r]}(x^rt+y^r(1-t))\Bigr\} \\
&\times\Bigl\{x^ry^r-(x^rt+y^r(1-t))(f_\tau)_{[1/r]}(0)\Bigr\}
-\Bigl\{(x^r-y^r)\sqrt{x^ry^rt(1-t)}\Bigr\}^2 \\
&=\biggl\{1+r(2k+2r-1)\eps^2-\biggl[1+r\biggl(k+{r-1\over2}\biggr)\eps^2\biggr]
\biggl[1+\beta r\biggl(k+{r-1\over2}\biggr)\eps^2\biggr]+o(\eps^2)\biggr\} \\
&\quad\times\biggl\{1-r\eps^2-\alpha^r\biggl[
1+r\biggl(k+{r-1\over2}\biggr)\eps^2\biggr]+o(\eps^2)\biggr\}
-r^2\eps^2(1-r\eps^2)(1-k^2\eps^2)+o(\eps^2) \\
&=\biggl\{r\eps^2\biggl[(1-\beta)k+(2r-1)-(1+\beta){r-1\over2}\biggr]+o(\eps^2)\biggr\} \\
&\quad\times\biggl\{1-\alpha^r-r\eps^2\biggl[1+\alpha^r\biggl(k+{r-1\over2}\biggr)\biggr]
+o(\eps^2)\biggr\}
-r^2\eps^2+o(\eps^2) \\
&=r\eps^2\biggl\{(1-\alpha^r)\biggl[(1-\beta)k+(2r-1)-(1+\beta){r-1\over2}\biggr]-r\biggr\}
+o(\eps^2).
\end{align*}
Define
$$
F(r):={r\over1-\alpha^r}-(2r-1)+(1+\beta){r-1\over2},\qquad r>0,
$$
and compute
$$
F'(r)={1-\alpha^r+r\alpha^r\log\alpha\over(1-\alpha^r)^2}-2+{1+\beta\over2}
<{1-\alpha^r+\alpha^r\log\alpha^r\over(1-\alpha^r)^2}-1
$$
thanks to $0<\beta<1$. Letting $v:=\alpha^r\in(0,1)$ for $r>0$ one has
$$
1-v+v\log v-(1-v)^2=v(\log v+1-v)<0.
$$
Hence $F'(r)<0$ for all $r>0$ so that $F(r)>F(1)$ for all $r\in(0,1)$. Therefore, for any
$r\in(0,1)$ one can choose a $k$ so that $(1-\beta)k>F(1)$ but $(1-\beta)k<F(r)$. Then,
for $x=1+\eps$, $y=1-\eps$ and $t=(1+k\eps)/2$ with small $\eps>0$, it follows that
$A\tau B\le I$ holds but $A^r\tau_{[1/r]}B^r\le I$ does not, as desired.
\end{proof}

\section{Concluding remarks}

In the last section some remarks are in order.

\medskip
{\bf 1.}\enspace
A main tool in the present paper is Theorem \ref{T-2.1} on the deformed mean. In particular,
part (3) of Theorem \ref{T-2.1} has played a key role to prove Ando-Hiai type inequalities
in Sections 3--5. Although we confine ourselves to showing Theorem \ref{T-2.1} in the
$n$-variable setting, a more comprehensive treatment of the deformed mean $M_\sigma$ has been
developed in \cite{HLL} in a more general setting of (bounded) probability measures on $\bP$.
Then, various inequalities including Ando-Hiai's inequality can be proved in a similar way
for means (or barycenters) of probability measures on $\bP$.

\medskip
{\bf 2.}\enspace
The operator means of Kubo-Ando \cite{KA} are binary operations on $B(\cH)^+$ by
definition. However, studies of $n$-variable operator means so far have mostly been developed
for means on $\bP^n$, the $n$ positive invertible operators. Our study here is also
concentrated on means on $\bP^n$, but it is easy to extend an operator mean on $\bP^n$
satisfying conditions (I)--(IV) in Section 2 to $B(\cH)^+$ in the usual way that
\begin{align}\label{F-7.1}
M(A_1,\dots,A_n):=\lim_{\eps\searrow0}M(A_1+\eps I,\dots,A_n+\eps I)
\quad\mbox{in SOT},\quad A_j\in B(\cH)^+.
\end{align}
Then one can easily extend some inequalities in the paper to operators in $B(\cH)^+$, as done
in (2) of Corollaries \ref{C-4.4} and \ref{C-4.5} in the $2$-variable situation. The extension
of $M$ to $(B(\cH)^+)^n$ by \eqref{F-7.1} has recently been studied in \cite{FS} for the
$n$-variable Karcher and the power means and more general $n$-variable operator means.

\medskip
{\bf 3.}\enspace
Ando-Hiai's inequality was originally used in \cite{AH} to establish the log-majorization for
the geometric mean by using the antisymmetric tensor power technique. The latter technique can
work in the case of the Karcher mean as well, as shown in \cite{BK}. By this and Ando-Hiai's
inequality in \eqref{F-3.13} (first proved in \cite{Ya1}) the log-majorization in \cite{AH}
can extend to the Karcher mean, as noted in \cite[Remark 3.8]{HiPe} (see also \cite{HL}),
which has been referred to in Remark \ref{R-3.8}. It is worth noting that the complementary
inequality in \eqref{F-3.14} can be used in a similar way to show a different log-majorization
for the Karcher mean. Indeed, for every $N\times N$ positive definite matrices $A_1,\dots,A_n$
and any $r\in(0,1]$ we have
$$
\bigl(\lambda_i(G_\omega(A_1^r,\dots,A_n^r))\bigr)_{i=1}^N
\prec_{\log}\bigl(\lambda_{N+1-i}^{r-1}(G_\omega(A_1,\dots,A_n)
\lambda_i(G_\omega(A_1,\dots,A_n))\bigr)_{i=1}^N,
$$
that is, for $1\le k\le N$,
$$
\prod_{i=1}^k\lambda_i(G_\omega(A_1^r,\dots,A_n^r))
\le\prod_{i=1}^k\lambda_{N+1-i}^{r-1}(G_\omega(A_1,\dots,A_n)
\lambda_i(G_\omega(A_1,\dots,A_n))
$$
with equality for $k=N$, where $(\lambda_i(X))_{i=1}^N$ denotes the eigenvalues of an
$N\times N$ positive definite matrix $X$ in decreasing order counting multiplicities (see
\cite{AH} for more details on log-majorization).

\section*{Acknowledgements}

The work of F.~Hiai and Y.~Seo was supported in part by Grant-in-Aid for Scientific
Research (C)17K05266 and (C)16K05253, respectively.


\begin{thebibliography}{99}

\bibitem{AH}
T. Ando and F. Hiai, Log majorization and complementary Golden-Thompson type inequality,
{\it Linear Algebra Appl.} {\bf 197} (1994), 113--131.

\bibitem{ALM}
T. Ando, C.-K. Li and R. Mathias, Geometric means,
{\it Linear Algebra Appl.} {\bf 385} (2004), 305--334.

\bibitem{BH}
R. Bhatia and J. Holbrook, Riemannian geometry and matrix geometric means,
{\it Linear Algebra Appl.} {\bf 413} (2006), 594--618.

\bibitem{BK}
R. Bhatia and R. L. Karandikar, Monotonicity of the matrix geometric mean,
{\it Math. Ann.} {\bf 353} (2012), 1453--1467.

\bibitem{FFS}
J. I. Fujii, M. Fujii and Y. Seo,
The Golden-Thompson-Segal type inequalities related to the weighted geometric mean due to
Lawson-Lim, {\it J. Math. Inequal.} {\bf 3} (2009), 511--518.

\bibitem{FS}
J. I. Fujii and Y. Seo, The relative operator entropy and the Karcher mean,
{\it Linear Algebra Appl.} {\bf 542} (2018), 4--34.

\bibitem{FSY}
J. I. Fujii, Y. Seo and T. Yamazaki, Norm inequalities for matrix geometric means of positive definite matrices, {\it Linear Multilinear Algebra} {\bf 64} (2016), 512--526. 

\bibitem{FMPS}
T. Furuta, J. Mi\'ci\'c Hot, J. Pe\v cari\'c and Y. Seo,
{\it Mond-Pe\v cari\'c Method in Operator Inequalities}, Zagreb, Element, 2005.

\bibitem{Ha}
F. Hansen, An operator inequality, {\it Math. Ann.} {\bf 246} (1980), 249--250.

\bibitem{HP}
F. Hansen and G. K. Pedersen, Jensen's inequality for operators and L\"owner's theorem,
{\it Math. Ann.} {\bf 258} (1982), 229--241.

\bibitem{Hi}
F. Hiai, Operator means deformed by a fixed point method,
Preprint, 2017, arXiv:1711.10170 [math.FA].

\bibitem{HLL}
F. Hiai, J. Lawson and Y. Lim, Means of probability measures deformed by a fixed point
method, Preprint, 2018.

\bibitem{HL}
F. Hiai and Y. Lim, Log-majorization and Lie-Trotter formula for the Cartan barycenter on
probability measure spaces,
{\it J. Math. Anal. Appl.} {\bf 453} (2017), 195-211.

\bibitem{HiPe}
F. Hiai and D. Petz, Riemannian metrics on positive definite matrices related to means II,
{\it Linear Algebra Appl.} {\bf 436} (2012), 2117--2136.

\bibitem{KLL}
S. Kim, H. Lee and Y. Lim, An order inequality characterizing invariant barycenters on
symmetric cones, {\it J. Math. Anal. Appl.} {\bf 442} (2016), 1--16.

\bibitem{KA}
F. Kubo and T. Ando, Means of positive linear operators,
{\it Math. Ann.} {\bf 246} (1980), 205--224.

\bibitem{LL0}
J. Lawson and Y. Lim, Monotonic properties of the least squares mean,
{\it Math. Ann.} {\bf 351} (2011) 267--279.

\bibitem{LL1}
J. Lawson and Y. Lim, Weighted means and Karcher equations of positive operators,
{\it Proc. Natl. Acad. Sci. USA} {\bf 110} (2013), 15626--15632.

\bibitem{LL2}
J. Lawson and Y. Lim, Karcher means and Karcher equations of positive definite operators,
{\it Trans. Amer. Math. Soc. Series B} {\bf 1} (2014), 1--22.

\bibitem{LP}
Y. Lim and M. P\'alfia, Matrix power means and the Karcher mean,
{\it J. Funct. Anal.} {\bf 262} (2012), 1498--1514.

\bibitem{LY}
Y. Lim and T. Yamazaki, On some inequalities for the matrix power and Karcher means,
{\it Linear Algebra Appl.} {\bf 438} (2013), 1293--1304.

\bibitem{Mo}
M. Moakher, A differential geometric approach to the geometric mean of symmetric
positive-definite matrices,
{\it SIAM J. Matrix Anal. Appl.} {\bf 26} (2005), 735--747.

\bibitem{Pa}
M. P\'alfia, Operator means of probability measures and generalized Karcher
equations, {\it Adv. Math.} {\bf 289} (2016), 951--1007.

\bibitem{PW}
W. Pusz and S. L. Woronowicz, Functional calculus for sesquilinear forms and
the purification map,
{\it Rep. Math. Phys.} {\bf 8} (1975), 159--170.

\bibitem{Se}
Y. Seo, On a reverse of Ando-Hiai inequality,
{\it Banach J. Math. Anal.} {\bf 4} (2010), 87--91.

\bibitem{Th}
A. C. Thompson, On certain contraction mappings in a partially ordered vector space,
{\it Proc. Amer. Math. Soc.} {\bf 14} (1963), 438--443.

\bibitem{Wa}
S. Wada, Some ways of constructing Furuta-type inequalities,
{\it Linear Algebra Appl.} {\bf 457} (2014), 276--286.

\bibitem{Wa2}
S. Wada, When does Ando-Hiai inequality hold?,
{\it Linear Algebra Appl.} {\bf 540} (2018), 234--243.

\bibitem{Ya1}
T. Yamazaki, The Riemannian mean and matrix inequalities related to the Ando-Hiai
inequality and chaotic order, {\it Oper. Matrices} {\bf 6} (2012), 577--588.

\bibitem{Ya2}
T. Yamazaki, Generalized Karcher equation, relative operator entropy and the Ando-Hiai
inequality, Preprint 2018, arXiv:1802.06200 [math.FA].

\end{thebibliography}
\end{document}